\documentclass{ws-m3as}
\usepackage[super]{cite}
\usepackage{url}
\usepackage{cancel}
\usepackage{lmodern}

\usepackage{hyperref}
\usepackage{algorithm}
\usepackage{algpseudocode}
\DeclareSymbolFont{rsfs}{U}{rsfs}{m}{n}
\DeclareSymbolFontAlphabet{\mathscrsfs}{rsfs}
\newcommand{\mscr}[1]{\mathscrsfs{#1}}

\usepackage[caption=false]{subfig}
\usepackage{pgfplots}
\allowdisplaybreaks

\newcommand{\tnorm}[1]{{\left\vert\kern-0.25ex\left\vert\kern-0.25ex\left\vert #1 
		\right\vert\kern-0.25ex\right\vert\kern-0.25ex\right\vert}}

\usepackage{graphicx,epstopdf}
\hypersetup{hypertexnames=false}
\usepackage{amsopn}

\usepackage[showonlyrefs]{mathtools}
\mathtoolsset{showonlyrefs}
\usepackage{xspace}
\usepackage{bold-extra}
\usepackage[most]{tcolorbox}
\usepackage{xcolor}
\colorlet{texcscolor}{blue!50!black}
\colorlet{texemcolor}{red!70!black}
\colorlet{texpreamble}{red!70!black}

\colorlet{codebackground}{black!25!white!25}

\allowdisplaybreaks

\newcommand{\R}{{\mathbb R}}
\def\epsilon{\varepsilon}
\def\hat{\widehat}

\def\curl{\mathrm{\textbf{curl}}}
\renewcommand{\tau}{\phi}

\newcommand\dE{{\partial \E}}

\def\vb{{\bf v}}

\newcommand\bn{\boldsymbol{n}}

\newcommand{\blue}[1]{\textcolor{black}{#1}}

\definecolor{myMagenta}{rgb}{1,0,1}

\newcounter{ind}

\def\epsilon{\varepsilon}
\def\hat{\widehat}

\def\w{\boldsymbol{w}}
\def\f{\boldsymbol{f}}
\def\x{\boldsymbol{x}}
\def\u{\boldsymbol{u}}
\def\v{\boldsymbol{v}}

\def\y{\boldsymbol{y}}
\def\n{\boldsymbol{n}}

\def\V{\boldsymbol{V}} 
\def\H{\boldsymbol{H}}
\def\bbeta{\boldsymbol{\beta}}
\def\L{\boldsymbol{L}}
\def\E{K}

\def\wh{\boldsymbol{w_h}}
\def\uh{\boldsymbol{u_h}}
\def\uone{\boldsymbol{u_1}}
\def\utwo{\boldsymbol{u_2}}

\def\yh{\boldsymbol{y_h}}
\def\n{\boldsymbol{n}}

\begin{document}
\markboth{A. Khan, D. Mora, A. Silgado and H. Singh}{$C^1$-Conforming VEM for the Optimal Control of Oseen Equations}
%
\catchline{}{}{}{}{}
%
\title{$\boldsymbol{C^1}$ Virtual Element Methods for the Optimal Control of Oseen Equations with Stream-Function Formulation}

\author{Arbaz Khan\footnote{Corresponding author. A. Khan was supported by ANRF ARG-Matrics ANRF/ARGM/2025/001949/MTR.} \ \and Harpal Singh\footnote{``H. Singh was supported by CSIR, India with Award no. 09/0143(16048)/2022-EMR-I."}}

\address{Department of Mathematics, Indian Institute of Technology Roorkee\\
	 Roorkee 247667, India,
\textsuperscript{\textup{*}}arbaz@ma.iitr.ac.in, \textsuperscript{\textup{\dag}}harpal\_s@ma.iitr.ac.in}

\author{David Mora\footnote{``D. Mora was partially supported by the National Agency
	for Research and Development, ANID-Chile through FONDECYT project 1261217,
	and by project Centro de Modelamiento Matem\'atico (CMM), FB210005,
	BASAL funds for centers of excellence."}}

\address{
GIMNAP, Departamento de Matem\'atica, Universidad del B\'io-B\'io,
	  Concepci\'on, Chile \\and CI$^2$MA, Universidad de Concepci\'on, 	 Concepci\'on, Chile,
\textsuperscript{\textup{\ddag}}dmora@ubiobio.cl}

\author{Alberth Silgado\footnote{``A. Silgado was partially supported by the PRIN 2022 grant No. 2022MBY5JM, \emph{FREYA – Fault REactivation: a hYbrid numerical Approach}, funded by the Italian Ministry of Universities and Research (MUR) and the European Union through NextGenerationEU (M4C2). This research was carried out primarily during A. Silgado’s postdoctoral fellowship at the Università degli Studi di Bari.''}}
\address{
Centro de Ciencias Exactas, GIMNAP, Departamento de Ciencias B\'asicas, Facultad de Ciencias,\\  Universidad
	 del B\'io-B\'io, Chill\'an, Casilla 447, Chile,
	  asilgado@ubiobio.cl}
\maketitle


\begin{abstract}
This work analyzes a $C^1$-conforming virtual element method for the optimal control of generalized Oseen equations, formulated using the stream-function on simply connected polygonal domains. The method is based on a conforming discretization in $H^2$ of arbitrary order $k \ge 2$. The discrete scheme considers an approximation of the state and co-state equations which involves a specifically designed operator for the convective term and a \textit{variational discretization} of the control variable. We derive {quasi}-optimal a priori error estimates in the $H^1$ and $H^2$-norms for the stream-function, and in the $L^2$-norm for the control, under standard assumptions on the family of polygonal meshes. The proposed method recovers the fluid velocity, vorticity, and pressure using suitable projections. Finally, we report several numerical experiments to support the theoretical findings using different polygonal meshes including the cavity problem. We highlight that this analysis also covers OCPs governed by Brinkman and Stokes equation.
\end{abstract}

\keywords{Distributed optimal control problems; stream-function formulation; $C^1$ virtual element method; a priori error estimates; variational discretization.}

\ccode{AMS Subject Classification 2020: 65N12, 65N15, 65N30, 65K10, 76D07}

\section{Introduction} \label{Introduction.} \setcounter{equation}{0}
Partial differential equations (PDEs) are important to model various phenomena in physics, science, and engineering. Over the years, many numerical methods have been developed to solve PDEs, such as the finite difference method (FDM), the finite element method (FEM), and the finite volume method (FVM). All these methods are well-established with efficient implementations and strong theoretical results. In 2013, Beir\~ao da Veiga et al. introduced the Virtual Element Method (VEM),\cite{VEM1} to solve PDEs for many practical applications. The VEM is built on the ideas of the Mimetic Finite Differences and integrates them into the Galerkin framework, which leads to a generalized version of FEM. VEM stands out for its flexibility in handling polytopal meshes, including cases where traditional FEM faces challenges. {The polytopal meshes provide a better meshing of domain, automatically includes hanging nodes, give more efficient adaptive refinement, accommodate both concave and convex elements and provides more local rotational symmetries. Some key features include no explicit basis construction (which are critical in standard FEM), supports higher-order approximations and handles complex geometries.} Due to its versatility, VEM has inspired a lot of new developments in the field to enhance its applicability (cf Refs.~\refcite{VEMBOOK}, \refcite{MR4586821}).

Optimal control problems (OCPs) for fluid flow governed by the Stokes, Brinkman, Oseen, and Navier–Stokes equations have many practical applications. Reducing drag, controlling turbulence, managing blood flow, vehicle aerodynamics, among others, are some of the well-known real-life applications governed by these equations. In particular, the Oseen model plays a crucial role in fluid dynamics as a linearized approximation of the Navier-Stokes equations, making it essential for analyzing slow viscous flows with convection effects. It serves as a bridge between the Stokes and Navier-Stokes models, incorporating both diffusive and convective contributions. Additionally, it relates to the Brinkman model, which extends Stokes flow to porous media, highlighting its broad applicability in various physical and engineering contexts.  Numerical approximation of OCPs is challenging because of the coupling of control, state, and co-state equations. There is a wealth of research on solving such problems using conforming and non-conforming FEMs given in Refs.~\refcite{AAER1}, \refcite{MR2302057}, \refcite{CNS7}, \refcite{MR555382}, \refcite{MR3284269},  \refcite{LEADG}, \refcite{CMEYER}, \refcite{CNS2}, \refcite{HSAK1}, \refcite{HSAK3} and the references cited therein. In recent years, VEM has gained significant interest and has been used to solve various problems in fluid and solid mechanics, as seen in Refs. \refcite{4NLP}, \refcite{MR5111615}, \refcite{VEMREV}, \refcite{VEMGSOC}, \refcite{VSOSEEN}, \refcite{STABVEM}, \refcite{VEMSE}, \refcite{VEMFOUR}, \refcite{MFNS} and \refcite{MR4974425}. VEMs are commonly used for OCPs, with notable contributions coming from Refs.~\refcite{C1VEMCON}, \refcite{gudi2026virtual}, \refcite{CVEM2}, \refcite{SINGH2026521}, \refcite{JTVEM4}, \refcite{CVEM1} and \refcite{OCPBH} for different problems.
\subsection{Stream-Function Formulations}
A variety of distinct formulations have been proposed and analyzed for the Oseen equations; see Refs. \refcite{HHOO}, \refcite{VABAM}, \refcite{VANAYA}, \refcite{MR3577937}, \refcite{VSOSEEN}, \refcite{MLUBE}, \refcite{BURE}, \refcite{CCHEN}, \refcite{AGO} and \refcite{mishra2026}. $C^1$-VEM has been discussed in Refs. \refcite{NCVEMOSEEN}, \refcite{C1VEMNS}, \refcite{STREAMSTOKES}, \refcite{MORABRINKMAN}, \refcite{MORAOSEEN} and \refcite{MR4982456} for many problems. Recently, Mora et al. developed a $C^1$-VEM of high order for the generalized Oseen problem written in terms of the stream-function\cite{MORAOSEEN}. This formulation offers several benefits when solving fluid flow problems, such as \cite{MORAOSEEN,MSNSC1}:
\begin{itemize}
	\item The conforming discretization of a fourth-order PDE demands $C^{1}$-continuity, which is usually difficult due to the high number of degrees of freedom required
	(cf. \cite[Section~6.1]{PGC}). However, VEM provides an attractive
	and competitive approach to achieve this.
	\item {The formulation leads to a reduced system, since it involves only the scalar stream-function instead of the velocity–pressure pair. This results in a smaller global linear system}.
	\item Incompressibility condition on the velocity field is satisfied automatically and velocity, vorticity, and pressure can be recovered easily from the discrete stream-function. 
	\item Computations demonstrate the accuracy and robustness of this approach in capturing complex flow patterns and ensuring reliable results and better flow visualization.
\end{itemize} 
\subsection{Fundamental Contributions}
Brenner et al. introduced a $C^{1}$-VEM for an elliptic distributed OCP with pointwise state constraints in Ref. \refcite{C1VEMCON}. We remark that OCPs for fluid flows using the stream-function formulation and $C^{1}$-VEM remain unexplored in the existing literature. To the best of our knowledge, this paper represents the first design and analysis of a numerical scheme for distributed optimal control of generalized Oseen equations in stream-function form. Next, we highlight the main contributions of this work:
\begin{itemize}
	\item \textit{\textbf{Well-posedness:}} 
	We present a non-symmetric VEM discretization of general order $k \geq 2$ for solving OCPs for the Oseen equations, with the formulation written in terms of the stream-function, which involves a fourth-order PDE. We use $\textit{``optimize then discretize"}$ strategy; see Remark~4.1\cite{AAER1} with a \textit{variational discretization} of the control variable. The reduced scalar problem is analyzed using the Lax-Milgram theorem.
	\item \textit{\textbf{Advantages:}}
	Building on the VEM framework, we develop a $C^1$-conforming scheme for the stream-function formulation of the Oseen control problem. The use of polygonal meshes ensures geometric flexibility and efficiency, while velocity, vorticity, and pressure are seamlessly recovered via projection-based post-processing. A key contribution of this work is the introduction of a carefully designed operator for handling the convective term (cf. Lemma~\ref{Lemma 5.3.1.1.1}). Unlike standard approaches, this operator is tailored to the specific structure of our problem, and its formulation is critical in ensuring improved theoretical and numerical performance. 
	This operator is instrumental in establishing optimal $H^1$-norm error estimates for the stream-function. These estimates play a central role in guaranteeing optimal convergence rates for the control variable, which is a primary quantity of interest in the optimal control setting.
	The improvement achieved here stems from a novel aspect of our formulation: the strategic use of the $L^2$-orthogonality property. This property is built directly into the variational formulation. By doing so, we are able to better align the discrete and continuous structures of the problem, leading to more accurate and stable numerical approximations.
	\item \textit{\textbf{A Priori Error Analysis:}}
	{
		We develop an error analysis for the proposed $C^1$-conforming VEM of arbitrary order $k \geq 2$. Under suitable mesh and regularity assumptions, we establish quasi-optimal error estimates for the stream-function in the $H^2$ and $H^1$-norms in Theorem~\ref{Theorem 5.4.2.3}. In particular, for the lowest-order case $k=2$, we obtain an optimal convergence rate of $O(h^2)$ in the $L^2$-norm for the control variable if \(\psi, \vartheta \in H^3(\Omega)\). For \(k=3\), the convergence rate improves to \( O(h^3) \), assuming that the solution of state and co-state stream-function problems \(\psi, \vartheta \in H^4(\Omega)\) with suitable assumptions (see Remark~\ref{remark:aditional:regu2}f).
	} This additional regularity inherently satisfies the incompressibility condition and enhances the analysis as compared to previous results that assume velocity to be in \( H^3(\Omega)\). We also highlight that the present analysis also covers OCPs governed by Brinkman equation $(\bbeta=\boldsymbol{0})$ and Stokes equation $(\bbeta=\boldsymbol{0}, \sigma=0)$.
	\item \textit{\textbf{Velocity, Vorticity, and Pressure Recovery:}} Following the algorithm recently introduced in Refs.~\refcite{MORABRINKMAN} and \refcite{MORAOSEEN}, it is possible to recover the fluid velocity, vorticity, and pressure fields within the OCP framework. 
\end{itemize}
\subsection{Structure of the Paper}
The rest of the paper is structured as follows: in the remainder of this section, we introduce the function spaces required for the analysis. In Sec.~\ref{Model problem}, we state the OCP and discuss the existence and uniqueness of the optimal control with the stream-function formulation. Sec.~\ref{Virtual Element Methods} is devoted to the construction of a VEM discrete problem and its well-posedness. We provide a comprehensive a priori error analysis for all the variables in Sec.~\ref{Error Analysis}.  Sec.~\ref{Numerical Experiments.} covers the numerical tests with different polygonal meshes to validate the theoretical results. Moreover, additional examples of cases not covered by our theory are also presented. Lastly, in Section~\ref{SEC:CONCLUSIONS}, we provide concluding remarks and briefly discuss possible future developments of this work.
\subsection{Function Spaces and Notations} \label{Preliminaries and continuous formulation.} \setcounter{equation}{0}
Let $\Omega \subset \R^2$ be a simply connected polygonal domain with boundary $\Gamma := \partial \Omega$. For any open and bounded set $\mathfrak{D} \subset \R^2$ or $\mathfrak{D} \subset \R$, let $W^{s,p}(\mathfrak{D})$ denote the usual Sobolev spaces with associated norms $\|\cdot\|_{W^{s,p}(\mathfrak{D})}$ and semi-norms $|\cdot|_{W^{s,p}(\mathfrak{D})}$, for $s \ge0$ and  $1 \le p \le \infty$. For the case $s=0$, we define $W^{0,p}(\mathfrak{D}):=L^p(\mathfrak{D})$ with norm $\|\cdot\|_{p,\mathfrak{D}}$, and when $p=2$, we define $W^{s,2}(\mathfrak{D}):=H^{s}(\mathfrak{D})$ with norm $\|\cdot\|_{s,\mathfrak{D}}$. $(\cdot,\cdot)_{\mathfrak{D}}$ denotes standard $L^2$ inner-product. For the case $\mathfrak{D} = \Omega$, we avoid writing the domain in the subscript. Bold-face notations denote the vector-valued spaces. We define the following spaces:
\begin{align}
\label{vspace} \V &:= \H_{0}^{1}(\Omega) = \big\{\v \in \H^{1}(\Omega) :  \v|_{\Gamma} = \mathbf{0} \big\}, &&
	\boldsymbol{V_{0}} := \big\{\v \in \V: \nabla \cdot \v =0 \quad \text{in} \quad  \Omega \big\},\\
	Q &:= L_{0}^{2}(\Omega) = \big\{v \in L^2(\Omega) : (v,1) = 0 \big\},&& 
	\label{Hspace} \mscr{H} := \big\{v \in H^{1}(\Omega) : (v,1) = 0\big\}.
\end{align} 

\blue{Throughout this paper, we sometimes use $a \lesssim b$ (resp., $a \gtrsim b$) to denote $a \le C b$ (resp., $a \ge C b$), where $C$ is a positive constant independent of the mesh. We write $a \simeq b$ if both inequalities hold simultaneously.}
\section{Optimal Control Problem} \label{Model problem}
Consider the optimization problem that consists of finding the distributed optimal control $\u$, fluid velocity field $\y: \Omega \rightarrow \R^{2}$ and the pressure $p:\Omega \rightarrow \R$ such that:
\begin{align}\label{5.2.0.1}
	\mathcal{J}(\y,\u) = \underset{\tilde{\u} \in \mscr{U}_{\boldsymbol{ad}}}{\min} \mathcal{J}(\y,\tilde{\u}) = \underset{\tilde{\u} \in \mscr{U}_{\boldsymbol{ad}}}{\min} \ \bigg( \frac{1}{2} \|\y-\boldsymbol{y_d}\|_{0}^{2}
	+\frac{\lambda }{2}  \|\tilde{\u}\|_{0}^{2}\bigg),
\end{align}
subject to the generalized Oseen equations:
\begin{equation}\label{5.2.0.2}
	\begin{aligned}
		- \nu \Delta \y + (\boldsymbol{\beta} \cdot \nabla) \y + \sigma \y  + \nabla p &= \f + \u && \text{in} \  \Omega, \qquad &&
		\nabla \cdot \y = 0 &&  \text{in} \  \Omega, \\
		\hspace{4.4cm} \y &= \boldsymbol{0} && \text{on} \ \Gamma, \qquad&&
		 (p,1) = 0.
	\end{aligned}
\end{equation}
Here, {$\boldsymbol{y_d}\in \L^{2}(\Omega)$} is the desired velocity field, $\lambda>0$ is the regularization parameter, $\nu>0$ denotes the viscosity of the fluid, $\boldsymbol{\beta} \in \boldsymbol{W}^{1,\infty}(\Omega)$ is the convective velocity field, $\sigma \in L^{\infty}(\Omega)$ is a scalar function,
$\mathbf{\f} \in \boldsymbol{L}^{2}(\Omega)$ is given source term and $\u$ belongs to the \textbf{admissible set} defined as:
\begin{align*}
	\mscr{U}_{\boldsymbol{ad}}:= \left\{\u \in \boldsymbol{L}^{2}(\Omega): \boldsymbol{u_a} \le \u(\x) \le \boldsymbol{u_b} \ a.e. \ \text{in} \ \Omega; \boldsymbol{u_a}, \boldsymbol{u_b} \in \R^{2}\right\}.
\end{align*}
{Here, $\boldsymbol{u_a}$ and $\boldsymbol{u_b}$ represent the lower and upper bounds, respectively. The inequality $\boldsymbol{u_a} \le \u(\x) \le \boldsymbol{u_b}$ is understood component-wise, that is, each component of $\u(\x)$ is constrained between the corresponding components of $\boldsymbol{u_a}$ and $\boldsymbol{u_b}$.}
We suppose that there exists a constant {$\kappa\ge 0$ such that:
\begin{align*}
  \sigma(\x) - \frac{1}{2}(\nabla \cdot \boldsymbol{\beta}(\x)) \ge \kappa \quad \forall \x \in \Omega.
\end{align*}}
\subsection{Well-posedness} \label{Continuous formulation.}
The velocity-pressure weak formulation for the system \eqref{5.2.0.1} is: find $(\y,p) \in \V \times Q$, such that
\begin{equation}\label{5.2.1.1}
		\begin{aligned}
			(\nu \nabla \y,\nabla \v) \hspace{-0.3mm}+\hspace{-0.3mm} \frac{1}{2}((\boldsymbol{\beta} \cdot \nabla) \y, \v) \hspace{-0.3mm}-\hspace{-0.3mm} \frac{1}{2}((\boldsymbol{\beta} \cdot \nabla) \v, \y)
			+ \Big(\hspace{-0.3mm}\Big(\sigma\hspace{-0.3mm}-\hspace{-0.3mm}\frac{1}{2}(\nabla \cdot \boldsymbol{\beta})\Big) \y, \v\Big) \hspace{-0.3mm}-\hspace{-0.3mm} (p, \nabla \cdot \v) & = (\f+\u,\v) && \forall \v \in \V,\\
			\hspace{4.55cm} (q, \nabla \cdot \y) &= 0 && \forall q \in Q.
		\end{aligned}	
\end{equation}
\begin{definition}
	{Let $\Omega \subset \R^{2}$ be a simply connected domain}. Then, a vector field $\v \in \boldsymbol{V_{0}} $ if and only if there exists a scalar function $\phi \in H^2(\Omega)$ (known as \textbf{stream-function}), which is defined up to a constant (cf. Ref.~\refcite{VP}), such that
	\begin{align*}
		\v = \curl (\phi) \in \boldsymbol{V_{0}}.
	\end{align*}
\end{definition}
Let us consider the Hilbert space $\big(Z,\|\cdot\|_{Z}\big)$ with
\begin{align*}
	Z &:= \big\{\phi \in H^2(\Omega): \phi = \partial_{\n} \phi = 0 \ \text{on} \ \Gamma \big\} \quad \text{and} \quad &&\|\phi\|_{Z} := \big\|D^2 \phi \big\|_{0} \quad \forall \phi \in Z,
\end{align*}
where $\partial_{\n}$ denotes the normal derivative and $D^{2} \phi := (\partial_{ij} \phi)_{1\le i,j \le 2}$ denotes the Hessian matrix of $\phi$. By choosing $\psi \in Z$ as a stream-function of the velocity $\y \in \boldsymbol{V_{0}}$, we express the equivalent weak form of system \eqref{5.2.1.1} as: find $\psi \in Z$ such that
	\begin{align}\label{5.2.1.2}
	\mscr{O}_{s}(\psi,\phi) &:= \mscr{A}(\psi,\phi) + \mscr{B}(\psi,\phi) + \mscr{C}(\psi,\phi) = \mscr{L}_{\f,\u}(\phi) &&\forall \phi \in Z,
	\end{align}
	where the bilinear forms $\mscr{A}, \mscr{B}, \mscr{C}:Z \times Z \rightarrow \R$ and the linear functional $\mscr{L}_{\f,\u}:Z \rightarrow \R$ are defined as:
	\begin{subequations}\label{5.2.1.3}
		\begin{align}
			\label{5.2.1.3a} \mscr{A}(\psi,\phi) &:= \left(\nu D^{2} \psi, D^{2} \phi\right), \\
			\label{5.2.1.3b} \mscr{B}(\psi,\phi) &:= \frac{1}{2}[\left((\boldsymbol{\beta} \cdot \nabla) \curl(\psi), \curl(\phi)\right) - \left((\boldsymbol{\beta} \cdot \nabla) \curl(\phi), \curl(\psi)\right)],\\
			\label{5.2.1.3c} \mscr{C}(\psi,\phi) &:=  \Big(\Big(\sigma-\frac{1}{2}(\nabla \cdot \boldsymbol{\beta})\Big) \curl(\psi), \curl(\phi)\Big),&&
			{\mscr{L}_{\f,\u}(\phi)} &:= \left(\f+\u, \curl(\phi)\right),
		\end{align}
	\end{subequations}
for all $\psi,\phi \in Z$.
	The following lemma establishes well-posedness
	of the continuous problem \eqref{5.2.1.2}.
	\begin{lemma}\label{Lemma 2.1}
		There exist positive constants $C_{\bbeta}$ and $C_{\sigma}$, such that the bilinear and linear forms defined in \eqref{5.2.1.3} satisfy the following conditions for all $\psi,\phi \in Z$:
		\begin{align*}
			|\mscr{A}(\psi,\phi)| &\le \|\psi\|_{Z} \|\phi\|_{Z}, \qquad&&|\mscr{A}(\phi,\phi)| \ge \|\phi\|_{Z}^{2},\\
			|\mscr{C}(\psi,\phi)| &\le C_{\sigma}\|\psi\|_{Z} \|\phi\|_{Z}, \qquad &&|\mscr{C}(\phi,\phi)| \ge \kappa|\phi|_{1}^{2}, \\
			|\mscr{B}(\psi,\phi)| &\le C_{\bbeta}\|\psi\|_{Z} \|\phi\|_{Z},\qquad&& {|\mscr{L}_{\f,\u}(\phi)| \le \|\mscr{L}_{\f,\u}\|_{-1} \|\phi\|_{Z}}.
		\end{align*}
	\end{lemma}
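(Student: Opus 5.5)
The plan is to prove each of the six inequalities directly from Cauchy--Schwarz, together with two elementary facts about the space $Z$. First, for $\phi\in Z$ we have $|\curl(\phi)|=|\nabla\phi|$ pointwise and $|\curl(\phi)|_{1}=|\phi|_{2}=\|D^2\phi\|_0=\|\phi\|_Z$. Second, a Friedrichs-type inequality $|\phi|_1\le C_P\|\phi\|_Z$ holds on $Z$. I would establish this second fact first. Since $\phi=\partial_{\n}\phi=0$ on $\Gamma$ and the tangential derivative of $\phi$ vanishes along $\Gamma$ (because $\phi$ does), the full gradient $\nabla\phi$ vanishes on $\Gamma$. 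Hence each $\partial_i\phi\in H^1_0(\Omega)$. Applying the Poincar\'e inequality componentwise then gives $\|\nabla\phi\|_0\le C_P\|\nabla(\nabla\phi)\|_0=C_P\|D^2\phi\|_0$.

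With these facts, the bounds for $\mscr{A}$ are immediate. Cauchy--Schwarz gives $|\mscr{A}(\psi,\phi)|\le\nu\|\psi\|_Z\|\phi\|_Z$ and $\mscr{A}(\phi,\phi)=\nu\|\phi\|_Z^2$. The lemma states these with unit constant, which I read as either normalizing $\nu$ or absorbing it into the constants; I would keep the factor $\nu$ explicit in the argument.

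For $\mscr{C}$, I would bound the coefficient by $\|\sigma-\tfrac12\nabla\cdot\bbeta\|_{\infty}$, which is finite since $\sigma\in L^\infty(\Omega)$ and $\bbeta\in\boldsymbol{W}^{1,\infty}(\Omega)$. This yields $|\mscr{C}(\psi,\phi)|\le\|\sigma-\tfrac12\nabla\cdot\bbeta\|_\infty|\psi|_1|\phi|_1\le C_\sigma\|\psi\|_Z\|\phi\|_Z$, with $C_\sigma:=C_P^2\|\sigma-\tfrac12\nabla\cdot\bbeta\|_\infty$. The lower bound for $\mscr{C}(\phi,\phi)$ follows pointwise from the standing assumption $\sigma-\tfrac12\nabla\cdot\bbeta\ge\kappa$:
\[
\mscr{C}(\phi,\phi)\ge\kappa\|\curl(\phi)\|_0^2=\kappa|\phi|_1^2 .
\]

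For $\mscr{B}$, I would estimate each of its two terms separately. For the first,
\[
|((\bbeta\cdot\nabla)\curl(\psi),\curl(\phi))|\le\|\bbeta\|_{\infty}\,|\curl(\psi)|_1\,\|\curl(\phi)\|_0=\|\bbeta\|_\infty\|\psi\|_Z|\phi|_1\le C_P\|\bbeta\|_\infty\|\psi\|_Z\|\phi\|_Z .
\]
The second term is handled symmetrically, so $C_\bbeta:=C_P\|\bbeta\|_{\infty}$ suffices.

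Finally, for the functional, I interpret $\|\mscr{L}_{\f,\u}\|_{-1}$ as the $\boldsymbol{H}^{-1}$ norm of $\f+\u$. The key observation is that $\curl(\phi)\in\boldsymbol{H}^1_0(\Omega)$, again because $\nabla\phi=\boldsymbol{0}$ on $\Gamma$, and that $|\curl(\phi)|_1=\|\phi\|_Z$. Duality then gives
\[
|(\f+\u,\curl(\phi))|\le\|\f+\u\|_{-1}|\curl(\phi)|_1=\|\f+\u\|_{-1}\|\phi\|_Z .
\]
The only step needing genuine care is the vanishing of the full gradient on $\Gamma$ for $\phi\in Z$, which underlies both the Friedrichs inequality and the membership $\curl(\phi)\in\boldsymbol{H}^1_0$. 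I would justify it by a density argument with smooth functions compactly supported in $\Omega$, which is valid on a Lipschitz polygon. Everything else is routine Cauchy--Schwarz.
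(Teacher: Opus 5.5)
Your proof is correct. The identities $|\curl(\phi)|=|\nabla\phi|$ and $|\curl(\phi)|_{1}=\|\phi\|_{Z}$, the Friedrichs bound $|\phi|_1\le C_P\|\phi\|_Z$ obtained from $\partial_i\phi\in H^1_0(\Omega)$, and Cauchy--Schwarz with $C_\sigma=C_P^2\|\sigma-\tfrac12\nabla\cdot\bbeta\|_{\infty}$ and $C_{\bbeta}=C_P\|\bbeta\|_{\infty}$ are exactly what is needed.

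The paper gives no proof of this lemma; it treats it as standard before invoking Lax--Milgram, so your argument supplies the routine verification it takes for granted. You are right that the unit constants for $\mscr{A}$ hold only when $\nu=1$; in general both constants equal $\nu$. If $\|\mscr{L}_{\f,\u}\|_{-1}$ is read as the dual norm on $Z$, the last inequality holds by definition, and your $\boldsymbol{H}^{-1}$ bound is a slightly stronger version of it.
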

	Now, we are in a position to state that problem \eqref{5.2.1.2} is well-posed. It is a consequence of  Lemma~\ref{Lemma 2.1} and the Lax-Milgram theorem.
	\begin{theorem}\label{Theorem 5.2.1.3}
There exists a unique solution $\psi \in Z$ to the continuous problem \eqref{5.2.1.2}, which satisfies the following estimate, depending on the given data, for some constant $C > 0$
		\begin{align*}
			\|\psi\|_{Z} \le C {\|\mscr{L}_{\f,\u}\|_{-1}}.
		\end{align*}
	\end{theorem}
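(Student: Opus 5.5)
The plan is to apply the Lax--Milgram theorem to $\mscr{O}_s$ on the Hilbert space $(Z,\|\cdot\|_Z)$. We need continuity and coercivity of $\mscr{O}_s$, and boundedness of $\mscr{L}_{\f,\u}$, all of which are essentially collected in Lemma~\ref{Lemma 2.1}.

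\textbf{Continuity.} Add the three upper bounds of Lemma~\ref{Lemma 2.1}. Since $\nu$ multiplies $\mscr{A}$, this gives
\[
|\mscr{O}_s(\psi,\phi)|\le (\nu + C_{\bbeta}+C_\sigma)\,\|\psi\|_Z\|\phi\|_Z .
\]
The constants $C_{\bbeta}$ and $C_\sigma$ come from $\|\curl\phi\|_0=|\phi|_1$ together with the Poincar\'e/Friedrichs inequality on $Z$ (which has zero Dirichlet and Neumann traces), namely $|\phi|_1\lesssim\|\phi\|_Z$. We also use $\bbeta\in \boldsymbol{W}^{1,\infty}$ and $\sigma\in L^\infty$.

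\textbf{Coercivity.} Two observations give it.
\begin{itemize}
\item The form $\mscr{B}$ is skew-symmetric by construction, so $\mscr{B}(\phi,\phi)=0$ for every $\phi\in Z$.
\item Under the standing assumption $\sigma-\tfrac12\nabla\cdot\bbeta\ge\kappa\ge0$, we have $\mscr{C}(\phi,\phi)\ge\kappa|\phi|_1^2\ge 0$.
\end{itemize}
Hence
\[
\mscr{O}_s(\phi,\phi)\ge \nu\|D^2\phi\|_0^2=\nu\|\phi\|_Z^2 .
\]
This is the only point where care is needed. The convective term must not be bounded in absolute value, since that would produce a smallness condition; instead it must be eliminated exactly by skew-symmetry. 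Likewise, $\kappa$ may be zero, so the $\mscr{C}$ term is simply dropped rather than used. The coercivity constant is then just $\nu$, with no restriction on the size of $\bbeta$.

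\textbf{Right-hand side.} We have
\[
|\mscr{L}_{\f,\u}(\phi)|\le\|\f+\u\|_0\,|\phi|_1\lesssim \|\f+\u\|_0\,\|\phi\|_Z ,
\]
so $\mscr{L}_{\f,\u}\in Z'$, with dual norm denoted $\|\mscr{L}_{\f,\u}\|_{-1}$ as in Lemma~\ref{Lemma 2.1}. Lax--Milgram then yields a unique $\psi\in Z$. Testing with $\phi=\psi$ gives
\[
\nu\|\psi\|_Z^2\le \mscr{O}_s(\psi,\psi)=\mscr{L}_{\f,\u}(\psi)\le\|\mscr{L}_{\f,\u}\|_{-1}\|\psi\|_Z ,
\]
which is the stated bound with $C=1/\nu$.
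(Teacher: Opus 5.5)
Your proposal is correct and follows the paper's route: the paper obtains this theorem directly from the continuity and coercivity bounds of Lemma~\ref{Lemma 2.1} together with the Lax--Milgram theorem. Your explicit use of the skew-symmetry $\mscr{B}(\phi,\phi)=0$ and of $\kappa\ge 0$ to get coercivity constant $\nu$ (and hence $C=1/\nu$) spells out what the paper leaves implicit.
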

	\subsection{Optimality System}\label{Optimality system}
	By following Ref.~\refcite{FTO}, we define the \textit{control-to-state} map $\boldsymbol{\mathcal{S}}: \boldsymbol{L}^{2}(\Omega) \rightarrow \boldsymbol{L}^{2}(\Omega)$ that associates $\curl(\psi)$ of the state stream-function $\psi$ to a given control $\u \in \mscr{U}_{\boldsymbol{ad}}$. The optimal control problem (\ref{5.2.0.1}-\ref{5.2.0.2}) reduces to the following quadratic optimization problem in the Hilbert space $\L^{2}(\Omega)$:
	\begin{align}\label{5.2.1.4}
		\underset{\u \in \mscr{U}_{\boldsymbol{ad}}}{\min}\mathcal{J}_{r}(\u) := \frac{1}{2} \|\boldsymbol{\mathcal{S}}(\u)- \curl(\psi_d)\|_{0}^{2} +\frac{\lambda }{2}  \|\u\|_{0}^{2},
	\end{align}
	where $\curl(\psi_d)=\boldsymbol{y_d}$ {with $\nabla \cdot \boldsymbol{y_d} = 0$}. The functional $\mathcal{J}_{r} :\boldsymbol{L}^{2}(\Omega) \rightarrow \R$ just defined is known as the \textit{reduced cost functional} as it is dependent only on the optimal control $\u$.
	Since $\lambda >0$, $\mathcal{J}_r$ is strictly convex.	The existence and uniqueness of a optimal solution $(\psi,\u)$ to the optimization problem 
	\begin{align*}
		\underset{\u \in \mscr{U}_{\boldsymbol{ad}}}{\min} \mathcal{J}_r(\u) \qquad \text{subject to} \qquad \eqref{5.2.1.2}
	\end{align*}
	follows by an application of Theorem~2.14\cite{FTO}.
	
	Since, the \textit{control-to-state} operator $\boldsymbol{\mathcal{S}}$ is linear and continuous from the control space into the state space and the norm functional being a continuous linear operator is Fr\'echet differentiable. Therefore, the \textit{reduced cost functional} $\mathcal{J}_r$ is also Fr\'echet differentiable. Now, from the first derivative of the \textit{reduced cost functional}, we derive the \textit{necessary} and also \textit{sufficient} (because of the convexity), so called \textit{first order optimality condition}. The optimal solution $\u$ satisfies the following \textit{variational inequality} which is also called as \textit{first order optimality condition} (cf. Lemma~2.21\cite{FTO}):
	\begin{align*}
		\mathcal{J}_{r}'(\u)(\tilde{\u}-\u) \ge 0 \qquad \forall \tilde{\u} \in \mscr{U}_{\boldsymbol{ad}},
	\end{align*}
	Furthermore, by substituting gradient of the \textit{reduced cost functional} in the above inequality (cf. Theorem~2.20\cite{FTO}), we have
	\begin{align*}
		 \big(\curl(\vartheta) + \lambda \u,\tilde{\u}-\u\big) \ge 0  \qquad \forall \tilde{\u} \in \mscr{U}_{\boldsymbol{ad}},
	\end{align*}
	where $\vartheta$ denotes a unique stream-function solution to the following co-state problem
	\begin{align}
		\label{5.2.1.6}	\mscr{O}_{a}(\tau,\vartheta) &:= \mscr{A}(\vartheta,\tau) - \mscr{B}(\vartheta,\tau) + \mscr{C}(\vartheta,\tau) = {\mscr{L}_{\psi_d,\psi}(\tau)} &&\forall \tau \in Z,\\
		\nonumber \mscr{L}_{\psi_d,\psi}(\tau) &:= \left(\curl(\psi)- \curl(\psi_d),\curl(\tau)\right),
	\end{align}
	where we have used the skew-symmetry of $\mscr{B}(\cdot,\cdot)$ and the symmetry of bilinear forms $\mscr{A}(\cdot,\cdot)$ and $\mscr{C}(\cdot,\cdot)$. 
	
	By coupling the state and co-state system with the \textit{variational inequality}, the continuous optimality system is written as: find $(\psi, \vartheta, \u) \in Z \times Z \times \mscr{U}_{\boldsymbol{ad}}$, such that 
	\begin{subequations}\label{5.2.1.7}
		\begin{align}{}
			\label{5.2.1.7a} \mscr{O}_{s}(\psi,\phi) &= \mscr{A}(\psi,\phi) + \mscr{B}(\psi,\phi) + \mscr{C}(\psi,\phi) = {\mscr{L}_{\f,\u}(\phi)} &&\forall \phi \in Z,\\
			\label{5.2.1.7b} \mscr{O}_{a}(\tau,\vartheta)&= \mscr{A}(\vartheta,\tau) - \mscr{B}(\vartheta,\tau)+ \mscr{C}(\vartheta,\tau)\ = {\mscr{L}_{\psi_d,\psi}(\tau)} &&\forall \tau \in Z,\\
			\label{5.2.1.7c} &\hspace{1.48cm} \big(\curl(\vartheta) + \lambda \u,\tilde{\u}-\u\big) \ge \boldsymbol{0} &&\forall \tilde{\u} \in \mscr{U}_{\boldsymbol{ad}}.
		\end{align} 
	\end{subequations}
	\indent The \textit{variational inequality} \eqref{5.2.1.7c} is equivalently expressed as:
	\begin{align}\label{5.2.1.8}
		\u = \mscr{P}_{[\boldsymbol{u_a},\boldsymbol{u_b}]} \left(-\frac{1}{\lambda} \curl(\vartheta(\x))\right) \qquad \text{for} \ a.e. \ \x \ \text{in} \ \Omega,
	\end{align}
	(cf. Theorem~2.28,\cite{FTO}), where $\mscr{P}_{[\boldsymbol{u_a},\boldsymbol{u_b}]}$ is a projection operator defined as:
	\begin{align*}
		{\mscr{P}_{[\boldsymbol{u_a},\boldsymbol{u_b}]}(\v)(\x) := \min \bigl\{\boldsymbol{u_b}, \max \{\boldsymbol{u_a},\v(\x)\} \bigr\}}.
	\end{align*}
	\indent The optimal solution $\u \in \mscr{U}_{\boldsymbol{ad}}$ satisfies the following coercivity condition\cite{CCNRNEW}:
	\begin{align}\label{5.2.1.9}
		\mathcal{J}_r''(\u)[\tilde{\u},\tilde{\u}] \ge \lambda \|\tilde{\u}\|_{0}^{2} \qquad \qquad \forall \tilde{\u} \in \mscr{U}_{\boldsymbol{ad}}.
	\end{align}
	{In order to derive a priori error estimates, we need to analyze
	the regularity of the solutions for the optimal control problem.
	\begin{lemma}\label{addReg}
	There exists $s\in (\frac{1}{2},1]$ such that for sufficiently smooth $\boldsymbol{\beta}$ and $\sigma$
	and for all $\f\in\L^{2}(\Omega)$ and $\psi_d\in H^{1}(\Omega)$, we have $(\psi, \vartheta, \u) \in H^{2+s}(\Omega) \times H^{2+s}(\Omega) \times \H^{1}(\Omega)$.
	\end{lemma}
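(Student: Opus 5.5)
The plan is a bootstrap: regularity of the state from the right-hand side, then the co-state, then the control through the projection formula. Throughout, the basic tool is the elliptic regularity of the biharmonic problem with clamped boundary conditions on a polygonal domain. For $g \in H^{-1}(\Omega)$, the solution $w\in Z$ of $\nu\Delta^2 w = g$ belongs to $H^{2+s}(\Omega)$, where $s\in(\frac12,1]$ depends only on the interior angles of $\Omega$. This is the classical Grisvard/Blum--Rannacher result; $s=1$ if $\Omega$ is convex, and $s>\frac12$ always holds for polygons. We also have $\|w\|_{2+s}\lesssim \|g\|_{-1}$.

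\textbf{State equation.} Theorem~\ref{Theorem 5.2.1.3} gives $\psi\in Z$. We rewrite \eqref{5.2.1.7a} as $\mscr{A}(\psi,\phi)=\mscr{L}_{\f,\u}(\phi)-\mscr{B}(\psi,\phi)-\mscr{C}(\psi,\phi)$ for all $\phi\in Z$, and read it as a biharmonic problem with a right-hand side $g$. We show $g\in H^{-1}(\Omega)$ by checking that each term is bounded by $\|\phi\|_{1}$:
\begin{itemize}
\item Since $\u\in\mscr{U}_{\boldsymbol{ad}}\subset\L^2(\Omega)$ and $\f\in\L^2(\Omega)$, we have $|(\f+\u,\curl\phi)|\le \|\f+\u\|_0|\phi|_1$.
\item Since $\psi\in H^2$ and $\bbeta\in\boldsymbol{W}^{1,\infty}$, the field $(\bbeta\cdot\nabla)\curl\psi$ lies in $\L^2$. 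The other convective piece $((\bbeta\cdot\nabla)\curl\phi,\curl\psi)$ is integrated by parts (boundary terms vanish since $\phi\in Z$). This gives $-(\curl\phi,(\bbeta\cdot\nabla)\curl\psi)-((\nabla\cdot\bbeta)\curl\psi,\curl\phi)$, which is bounded by $C\|\psi\|_2|\phi|_1$.
\item The term $\mscr{C}$ is bounded the same way, using $\sigma\in L^\infty$ and $\nabla\cdot\bbeta\in L^\infty$.
\end{itemize}
Thus $g\in H^{-1}(\Omega)$, and the regularity result yields $\psi\in H^{2+s}(\Omega)$.

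\textbf{Co-state equation.} We treat \eqref{5.2.1.7b} identically. Its data are $(\curl\psi-\curl\psi_d,\curl\tau)$. Here $\curl\psi\in \L^2$, and $\curl\psi_d\in\L^2$ since $\psi_d\in H^1$. So the data are again bounded by $|\tau|_1$, and the transposed convection and reaction terms are handled as above. Hence $\vartheta\in H^{2+s}(\Omega)$.

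\textbf{Control.} We use \eqref{5.2.1.8}. Since $\vartheta\in H^{2+s}\subset H^2$, we get $\curl\vartheta\in\H^1(\Omega)$. The pointwise projection $\mscr{P}_{[\boldsymbol{u_a},\boldsymbol{u_b}]}$ acts componentwise as $\min\{b,\max\{a,\cdot\}\}$, which is Lipschitz. By Stampacchia's lemma it maps $H^1$ into $H^1$, with $|\mscr{P}(v)|_1\le|v|_1$. Since the bounds are constants, $\u\in\H^1(\Omega)$.

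\textbf{Main obstacle.} The delicate step is invoking the biharmonic regularity with $s>\frac12$ on a nonconvex polygon, together with showing the convective term defines an $H^{-1}$ functional. If one only proved $H^{-2}$ data, the shift would give $H^2$ and nothing would be gained. The integration by parts in $\mscr{B}$, or alternatively the smoothness assumption on $\bbeta$, is what avoids this.
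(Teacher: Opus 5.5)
Your proposal is correct and takes the same route as the paper. Both apply the Blum--Rannacher/Grisvard $H^{2+s}$ shift theorem for the clamped biharmonic problem (with $s\in(\frac12,1]$ determined by the angles of $\Omega$) to the state and co-state equations, and then read off $\u\in\H^1(\Omega)$ from the projection formula \eqref{5.2.1.8}. The paper only cites the regularity result, whereas you also check the details it leaves implicit: that the convective and reaction terms, after integrating by parts in $\mscr{B}$, define $H^{-1}$ data, and that the pointwise projection preserves $H^1$.
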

	\begin{proof}
	Since $\f\in\L^{2}(\Omega)$, $\u\in\mscr{U}_{\boldsymbol{ad}}\subset\L^{2}(\Omega)$
	and $\psi_d\in H^{1}(\Omega)$,
	by following the Refs.~\cite{MR595625, MR3051409}, we have that for a simply connected polygonal domain, not necessarily convex, with homogeneous Dirichlet boundary conditions, there exists $s\in (\frac{1}{2},1]$ such that the solution of problems \eqref{5.2.1.7a} and \eqref{5.2.1.7b} belong to $H^{2+s}(\Omega)$. Thus, by using \eqref{5.2.1.8}, we obtain that $\u\in\H^{1}(\Omega)$.
	\end{proof}
	}
\begin{remark}\label{remark:aditional:regu}
{The constant $s$ in the lemma above is the Sobolev regularity for the
biharmonic equation with the right-hand side in $H^{-1}(\Omega)$
and homogeneous Dirichlet boundary conditions.
This constant only depends on the domain $\Omega$. If $\Omega$ is convex,
then $s=1$. Otherwise, the lemma holds for all $s<s_0$, where $s_0\in (\frac{1}{2},1)$ depends
on the largest re-entrant angle of $\Omega$ (see Ref.~\cite{MR775683}
for the precise equation determining $s_0$).}
\end{remark}
\begin{remark}\label{remark:aditional:regu2}
{If $\Omega$ is convex with all the interior angles of $\Omega$ are less than $126.283$ degrees, $\f\in\H^{1}(\Omega)$ and  $\psi_d\in H^{2}(\Omega)$
then $(\psi, \vartheta, \u) \in H^{4}(\Omega) \times H^{4}(\Omega) \times \H^{1}(\Omega)$ (see Refs.~\cite{MR595625, MR4374004} for more details).
In this case, we note that the right-hand sides in \eqref{5.2.1.7a} and \eqref{5.2.1.7b} belong to $\L^{2}(\Omega)$.}
\end{remark}
	\section{VEM Discretization and Well-posedness} \label{Virtual Element Methods} \setcounter{equation}{0}
This section is devoted to the construction of a $C^1$-VEM for the numerical approximation of the state and adjoint problems \eqref{5.2.1.7a} and~\eqref{5.2.1.7b}, respectively. The method is formulated for general polynomial degrees $k \geq 2$.
	
	Let $\{\mscr{T}_{h}\}_{h>0}$ be a sequence of subdivisions of $\Omega$ into general polygon-shaped elements $K$. We define $h_K$ as the largest side length of an element $K$ and set $h = \max_{K \in \mscr{T}_{h}} h_K$. The number of vertices of $K$ is denoted by $N_{K}$ and $\v_{i}$ represents a generic vertex of $K$, where $i \in \{1, \dots, N_K\}$. Let $\mathcal{E}(\mscr{T}_{h})$ be the set of all edges with $h_{E}$ and $\n_{K}^{E}$ denoting the length and the unit normal vector pointing outward from $K$ for an edge $E$. Additionally, we define $x_E$ as the midpoint of an edge $E$ and $\x_{K}$ as the centroid of $K$. For the theoretical analysis, we assume there exists a constant $C_{\mscr{T}_h} > 0$, independent of $h_K$, such that for all $K \in \mscr{T}_h$:\\
	$\boldsymbol{\mathcal{A}1}:$ Ratio of the diameter \( h_K \) to shortest edge of element \( K \) exceeds \( C_{\mscr{T}_{h}} \).\\
	$\boldsymbol{\mathcal{A}2}:$ \( K \) is star-shaped around each point inside a ball with radius \( C_{\mscr{T}_{h}} h_K \).

	\subsection{Virtual Spaces and Polynomial Projections Operator}\label{Virtual Spaces and Polynomial Projections Operator}
	Let \(\mathbb{P}_{\ell}(\mathfrak{D})\) represent the space of polynomial functions that can be defined locally within \(\mathfrak{D}\) with a total degree of at most \(\ell\). Meanwhile, \(\mscr{M}_{\ell}^{*}(K)\) denotes the collection of scaled monomials that are defined for each polygon \(K\) as:
	\begin{align*}
		\mscr{M}_{\ell}^{*}(K):= \left\{ \left(\frac{\x-\x_{K}}{h_K}\right)^{\boldsymbol{\alpha}}:|\boldsymbol{\alpha}| = \ell\right\},
	\end{align*}
	where $\boldsymbol{\alpha} = (\alpha_1, \alpha_2)$ is a non-negative multi-index with $|\boldsymbol{\alpha}| = \alpha_1 + \alpha_2$ and $\x^{\boldsymbol{\alpha}} = x_1^{\alpha_1}x_2^{\alpha_2}$. Next, we define 
	\begin{align*}
		\mscr{M}_{k}(K) := \underset{\ell \le k}{\bigcup} \mscr{M}_{\ell}^{*}(K) = \big\{m_{j}\big\}_{j=1}^{d_k},
	\end{align*}
	as a basis for the space $\mathbb{P}_{k}(K)$, whose dimension is $d_{k}$.
	
	We now define the set of scaled monomials defined on each edge $E$ as:
	\begin{align*}
		\mscr{M}_{\ell}(E) := \left\{1,\frac{x-x_E}{h_E},\bigg(\frac{x-x_E}{h_E}\bigg)^{2},\ldots,\bigg(\frac{x-x_E}{h_E}\bigg)^{\ell}\right\}.
	\end{align*}
	\indent For all $K \in \mscr{T}_h$ and an integer $k \geq 2$, we define the following basic local virtual space:
	\begin{align*}
		\tilde{Z}_{h}^{k}(K):= \Big\{\phi_h \in H^{2}(K)&: \ \Delta^{2} \phi_h \in \mathbb{P}_{k-2}(K), \ \phi_h|_{\partial K} \in C^{0}(\partial K), \phi_h|_{E} \in \mathbb{P}_{r}(E),\\
		& \ \ \ \ \nabla \phi_h|_{\partial K} \in \big[C^{0}(\partial K)\big]^{2}, \ \partial_{\n_{K}^{E}} \phi_h|_{E} \in \mathbb{P}_{t}(E) \ {\forall E \subset \partial K}\Big\},
	\end{align*}
	where $r := \max \{3,k\}$ and $t:= k-1$. For all $\phi_h \in \tilde{Z}_{h}^{k}(K)$, we define the following linear operators that map $\tilde{Z}_{h}^{k}(K)$ to $\mathbb{R}$:
	\begin{itemize}
		\item $\boldsymbol{D_1}$: holds linear operators that compute $\phi_h$ at all the vertices of $K$;\vspace{1mm}
		\item $\boldsymbol{D_2}$: holds linear operators that compute $h_{\v_i} \nabla \phi_h$ at all the vertices of $K$; \vspace{1mm}
		\item $\boldsymbol{D_3}$: moments $\dfrac{1}{h_E} \displaystyle\int_{E} m(s) \phi_h(s) ds$ \ $\forall m \in \mscr{M}_{k-4} (E)$, $\forall E \in \mathcal{E}(\mscr{T}_{h})$ and $k\ge 4$; \vspace{1mm}
		\item $\boldsymbol{D_4}$: moments $\displaystyle \int_{E} m(s) \partial_{\n_{K}^{E}} \phi_h(s) ds \ \forall m \in \mscr{M}_{k-3} (E)$, $\forall E \in \mathcal{E}(\mscr{T}_{h})$ and $k\ge 3$; \vspace{1mm}
		\item $\boldsymbol{D_5}$: moments $\dfrac{1}{h_K^{2}} \displaystyle \int_{K} m(\x) \phi_h(\x) d \x \ \forall m \in \mscr{M}_{k-4} (K)$, $\forall K \in \mscr{T}_{h}$ and $k\ge 4$,
	\end{itemize}
	where $h_{\v_i}$ represent the average diameter of the elements sharing $\v_i$ as a common vertex. We break down the bilinear forms $\mscr{A}(\cdot,\cdot)$, $\mscr{B}(\cdot,\cdot)$, and $\mscr{C}(\cdot,\cdot)$ into their local contributions as follows:
		\begin{align*}
            \mscr{A}(\psi,\phi) = \sum_{K \in \mscr{T}_{h}} \mscr{A}^{K}(\psi,\phi) &:= \sum_{K \in \mscr{T}_{h}} \big(\nu D^{2} \psi, D^{2} \phi\big)_{K},
			\\
			 \mscr{B}(\psi,\phi)= \sum_{K \in \mscr{T}_{h}} \mscr{B}^{K}(\psi,\phi) &:=\sum_{K \in \mscr{T}_{h}}\frac{1}{2} \Big(\widetilde{\mscr{B}}^{K}(\psi,\phi)-\widetilde{\mscr{B}}^{K}(\phi,\psi)\Big),\\
        \mscr{C}(\psi,\phi) = \sum_{K \in \mscr{T}_{h}} \mscr{C}^{K}(\psi,\phi) &:= \sum_{K \in \mscr{T}_{h}} \Big(\Big(\sigma-\frac{1}{2}(\nabla \cdot \boldsymbol{\beta})\Big) \curl(\psi), \curl(\phi)\Big)_{K},
		\end{align*}
	with $\widetilde{\mscr{B}}^{K}(\psi,\phi):= \big((\boldsymbol{\beta} \cdot \nabla) \curl(\psi), \curl(\phi) \big)_K$ for all $\psi,\phi \in Z$. Now, our aim is to build discrete version of local bilinear forms by defining the following projector, for $k\ge2$:
	\begin{align*}
		\Pi_{K}^{k,D}&: \tilde{Z}_{h}^{k}(K) \rightarrow \mathbb{P}_{k}(K) \subseteq \tilde{Z}_{h}^{k}(K), \qquad &&\phi_h \rightarrow \Pi_{K}^{k,D} \phi_h,
	\end{align*}
\blue{where $\Pi_{K}^{k,D} \phi_h$ is the solution of the local problems:
	\begin{align}
		\mscr{A}^{K}\big(\Pi_{K}^{k,D} \phi_h, q\big) &= \mscr{A}^{K}(\phi_h,q) \quad \forall q \in \mathbb{P}_{k}(K),\label{proj:PiD} \\ 
	\Pi^{0}_{\partial K}(\Pi_{K}^{k,D} \phi_h) &= \Pi^{0}_{\partial K}(\phi_h), \label{kernel:proj:PiD1}\\  
		\Pi^{0}_{\partial K}(\partial_i\Pi_{K}^{k,D} \phi_h) &=  \Pi^{0}_{\partial K}(\partial_i\phi_h), \quad \text{for $i=1,2$.} \label{kernel:proj:PiD}
	\end{align}
	where 
	\begin{align}\label{5.3.1.2.0}
		\Pi^{0}_{\partial K}(\varphi_h) := \frac{1}{|\partial K|} \int_{ \partial K}\varphi_h  \qquad \forall \varphi_h \in H^2(K).
	\end{align}}
	
	The following result establishes that the projector $\Pi_{K}^{k,D}$ is fully computable using the sets $\boldsymbol{D_1}$-$\boldsymbol{D_5}$\cite{VEMPB}.

	\begin{lemma}\label{Lemma 5.3.1.1}
		The operator $\Pi_{K}^{k,D}: \tilde{Z}_{h}^{k}(K) \rightarrow \mathbb{P}_{k}(K)$ can be explicitly calculated for any $\phi_h \in \tilde{Z}_{h}^{k}(K)$, relying totally on the data from the linear operators $\boldsymbol{D_1}$-$\boldsymbol{D_5}$.
	\end{lemma}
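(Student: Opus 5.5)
Since $\mathbb{P}_k(K)$ is finite dimensional, it suffices to show that every right-hand side in \eqref{proj:PiD}--\eqref{kernel:proj:PiD} can be computed from $\boldsymbol{D_1}$--$\boldsymbol{D_5}$ for each basis element $q=m_j\in\mscr{M}_k(K)$. The three conditions together determine $\Pi_K^{k,D}\phi_h$ uniquely. Indeed, $\mscr{A}^K(\cdot,\cdot)$ is an inner product on $\mathbb{P}_k(K)$ modulo $\mathbb{P}_1(K)$, and conditions \eqref{kernel:proj:PiD1}--\eqref{kernel:proj:PiD} fix the affine part: the two boundary averages of $\partial_i$ fix the linear coefficients, and then the average of the function fixes the constant. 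The plan is therefore to check computability term by term.

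\textbf{The averages.} On each edge $E$, $\phi_h|_E\in\mathbb{P}_r(E)$ with $r=\max\{3,k\}$. This polynomial is determined by Hermite data at the two endpoints together with the edge moments in $\boldsymbol{D_3}$: the vertex values come from $\boldsymbol{D_1}$, and the tangential derivatives at the endpoints come from $\boldsymbol{D_2}$. Hence $\int_{\partial K}\phi_h$ is computable. Likewise $\nabla\phi_h|_E$ is computable on each edge:
\begin{itemize}
\item the tangential derivative is obtained by differentiating the known $\phi_h|_E$;
\item the normal derivative $\partial_{\n_K^E}\phi_h|_E\in\mathbb{P}_{k-1}(E)$ is fixed by its endpoint values (from $\boldsymbol{D_2}$) together with the moments against $\mscr{M}_{k-3}(E)$ in $\boldsymbol{D_4}$.
\end{itemize}
This gives $\Pi^0_{\partial K}(\partial_i\phi_h)$.

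\textbf{The bilinear form.} For \eqref{proj:PiD}, integrate by parts twice:
\begin{align*}
\int_K D^2\phi_h:D^2q=\int_K\phi_h\,\Delta^2q-\int_{\partial K}\phi_h\,\partial_{\n}\Delta q+\int_{\partial K}\big(\partial_{\n}\phi_h\,\partial_{\n\n}q+\partial_{t}\phi_h\,\partial_{\n t}q\big).
\end{align*}
The tangential term is sometimes integrated once more along $\partial K$, which produces vertex jump terms. Either form is computable, since both $\phi_h$ and $\nabla\phi_h$ are known on $\partial K$ by the previous step. The interior term involves $\Delta^2q\in\mathbb{P}_{k-4}(K)$, so it equals a combination of the moments in $\boldsymbol{D_5}$; it is simply absent when $k\le3$.

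\textbf{Main obstacle.} The delicate point is matching the polynomial degrees on the edges, namely that $\boldsymbol{D_1}$--$\boldsymbol{D_4}$ really determine $\phi_h|_E\in\mathbb{P}_r(E)$ and $\partial_{\n}\phi_h|_E\in\mathbb{P}_{k-1}(E)$. For the traces, $4+(k-3)=k+1=\dim\mathbb{P}_k(E)$ when $k\ge4$, and for $k=2,3$ the degree is $r=3$ with exactly 4 Hermite conditions. For the normal derivative, $2+(k-2)=k=\dim\mathbb{P}_{k-1}(E)$. These counts close the argument, which is the standard one for $C^1$ virtual elements cited from Ref.~\refcite{VEMPB}.
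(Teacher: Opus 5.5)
Your proposal is correct and follows essentially the same route as the paper: integrate by parts twice, treat the volume term $\int_K \phi_h\,\Delta^2 q$ with $\boldsymbol{D_5}$, and evaluate the boundary terms and the kernel conditions from the traces of $\phi_h$ and $\nabla\phi_h$ on $\partial K$, which $\boldsymbol{D_1}$--$\boldsymbol{D_4}$ determine. Your edge-by-edge reconstruction of these traces is more careful than the paper's sketch: it rightly uses $\boldsymbol{D_3}$ and $\boldsymbol{D_4}$ for the boundary averages in \eqref{kernel:proj:PiD1}--\eqref{kernel:proj:PiD}, which for $k\ge 3$ are not determined by $\boldsymbol{D_1}$--$\boldsymbol{D_2}$ alone, contrary to the paper's closing remark.
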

\begin{proof}
Let $\phi_h \in \tilde{Z}_h^k(K)$ and $q_k \in \mathbb{P}_k(K)$, then we perform two successive integrations by parts on the right-hand side of~\eqref{proj:PiD}, which yields
\begin{equation*}
	\begin{aligned}
		\mscr{A}^{K}(\phi_h,q_k)
		&= \nu\int_K D^2 \phi_h : D^2 q_k \\
		&= \nu\int_K \Delta^2 q_k\, \phi_h
		- \nu\int_{\partial K} \operatorname{div}(D^2 q_k)\cdot \boldsymbol{n}_{K}\, \phi_h
		+ \nu\int_{\partial K} D^2 q_k\, \boldsymbol{n}_{K} \cdot \nabla \phi_h .
	\end{aligned}
\end{equation*}

Since $\Delta^2 q_k \in \mathbb{P}_{k-4}(K)$, the volume integral on the right-hand side of the above identity can be explicitly evaluated using the degrees of freedom in the set $\boldsymbol{D_5}$. Moreover, the boundary integrals depend solely on the traces of $\phi_h$ and $\nabla \phi_h$ along $\partial K$, and are therefore computable from the degrees of freedom in the sets $\boldsymbol{D_1}$-$\boldsymbol{D_4}$. Finally, the kernel component of the operator $\Pi^{k,D}_K$ (see~\eqref{kernel:proj:PiD}) is determined using only the degrees of freedom in the sets $\boldsymbol{D_1}$-$\boldsymbol{D_2}$.
	\end{proof}
\	
	For each polygon $K \in \mscr{T}_h$, the local virtual space is defined as:
	\begin{align*}
		Z_{h}^{k}(K):= \Big\{\phi_h \in \tilde{Z}_{h}^{k}(K) : (q^{*}, \Pi_{K}^{k,D}\phi_h - \phi_h)_{K} = 0 \ \forall q^{*} \in \mscr{M}_{k-3}^{*}(K) \cup \mscr{M}_{k-2}^{*}(K) \Big\},
	\end{align*}
where $\mscr{M}_{k-3}^{*}(K)$ and $\mscr{M}_{k-2}^{*}(K)$ are scaled monomials of degree $k-3$ and $k-2$, respectively, with the convention that $\mscr{M}_{-1}^{*}(K)=\emptyset$.
  
\indent We can observe that $\mathbb{P}_{k}(K) \subseteq Z_{h}^{k}(K) \subseteq \tilde{Z}_{h}^{k}(K)$. The sets $\boldsymbol{D_1}$-$\boldsymbol{D_5}$ form the degrees of freedom for $Z_{h}^{k}(K)$\cite{VEMFOUR}. The condition in defining the local space $Z_{h}^{k}(K)$ is crucial for building an $L^2$-projection, which helps to define the discrete bilinear forms. Specifically, we use the $L^2(K)$-projection onto $\mathbb{P}_{k-2}(K)$, defined as: for any function $\phi \in L^2(K)$, the projection $\Pi_{K}^{k-2}\phi$ belongs to $\mathbb{P}_{k-2}(K)$ and satisfies
	\begin{align*}
		\big(q,\phi-\Pi_{K}^{k-2}\phi \big)_{K} = 0 \qquad \forall q \in \mathbb{P}_{k-2}(K).
	\end{align*}
	\indent The next lemma shows that $\Pi_{K}^{k-2}$ can be calculated within $Z_{h}^{k}(K)$, which follows from the definition of local virtual space and the corresponding degrees of freedom.
	\begin{lemma}\label{Lemma 5.3.1.1.0}
		The operator $\Pi_{K}^{k-2}: Z_{h}^{k}(K) \rightarrow \mathbb{P}_{k-2}(K)$ can be directly computed for any $\phi_h \in Z_{h}^{k}(K)$, using just the degrees of freedom $\boldsymbol{D_1}$-$\boldsymbol{D_5}$.
	\end{lemma}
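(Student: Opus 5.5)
To compute $\Pi_{K}^{k-2}\phi_h$ it suffices to know the moments $(q,\phi_h)_K$ for all $q$ in the scaled-monomial basis $\mscr{M}_{k-2}(K)$ of $\mathbb{P}_{k-2}(K)$. Once these moments are known, $\Pi_{K}^{k-2}\phi_h=\sum_j c_j m_j$ is obtained by solving the local Gram system $\sum_j c_j (m_j,m_i)_K=(\phi_h,m_i)_K$ for $i=1,\dots,d_{k-2}$. The Gram matrix is symmetric positive definite and involves only polynomials, so it is computable exactly, for instance by quadrature on a sub-triangulation of $K$. The task is therefore to show that every moment $(m,\phi_h)_K$ with $m\in\mscr{M}_{k-2}(K)$ is determined by $\boldsymbol{D_1}$-$\boldsymbol{D_5}$.

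I would split $\mscr{M}_{k-2}(K)=\mscr{M}_{k-4}(K)\cup\mscr{M}_{k-3}^{*}(K)\cup\mscr{M}_{k-2}^{*}(K)$, with the convention that sets with negative index are empty, and treat the two parts separately.

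For $m\in\mscr{M}_{k-4}(K)$, which occurs only when $k\ge4$, the moment $\int_K m\,\phi_h$ equals $h_K^2$ times a degree of freedom in $\boldsymbol{D_5}$. It is therefore directly available.

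For $q^{*}\in\mscr{M}_{k-3}^{*}(K)\cup\mscr{M}_{k-2}^{*}(K)$, I use the enhancement constraint built into $Z_{h}^{k}(K)$. It gives $(q^{*},\phi_h)_K=(q^{*},\Pi_{K}^{k,D}\phi_h)_K$. By Lemma~\ref{Lemma 5.3.1.1}, $\Pi_{K}^{k,D}\phi_h\in\mathbb{P}_k(K)$ is computable from $\boldsymbol{D_1}$-$\boldsymbol{D_5}$. Hence the right-hand side is an integral of a known polynomial and can be evaluated exactly.

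Combining the two cases, every moment against $\mathbb{P}_{k-2}(K)$ is computable from $\boldsymbol{D_1}$-$\boldsymbol{D_5}$, and so is $\Pi_{K}^{k-2}\phi_h$.

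Little here is a genuine obstacle. The main care point is the bookkeeping for low orders:
\begin{itemize}
\item For $k=2$, $\mathbb{P}_0$ is spanned by $\mscr{M}_0^{*}$, which equals $\mscr{M}_{k-2}^{*}$, and $\mscr{M}_{-1}^{*}=\emptyset$.
\item For $k=3$, $\mathbb{P}_1=\mscr{M}_0^{*}\cup\mscr{M}_1^{*}=\mscr{M}_{k-3}^{*}\cup\mscr{M}_{k-2}^{*}$.
\end{itemize}
In both cases no $\boldsymbol{D_5}$ moments are needed, so the decomposition covers $\mathbb{P}_{k-2}(K)$ for every $k\ge2$.

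A second point to check is that $Z_{h}^{k}(K)$ is unisolvent for $\boldsymbol{D_1}$-$\boldsymbol{D_5}$, so that the degrees of freedom determine $\phi_h$ and hence its projection. I take this as given from Ref.~\refcite{VEMFOUR}, as stated just before the lemma.
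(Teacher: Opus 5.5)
Your proposal is correct and follows the same route as the paper. The paper gives no separate proof and only says the lemma follows from the definition of $Z_h^k(K)$ and its degrees of freedom: moments against $\mathbb{P}_{k-4}(K)$ come from $\boldsymbol{D_5}$, and moments against $\mscr{M}_{k-3}^{*}(K)\cup\mscr{M}_{k-2}^{*}(K)$ are replaced, via the enhancement constraint, by moments of the computable polynomial $\Pi_K^{k,D}\phi_h$.
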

\indent The next result establishes the construction
of a novel projection operator to be used in the discretization of the convective term. The proposed operator enables the derivation of optimal error estimates for the stream-function.
	\begin{lemma}\label{Lemma 5.3.1.1.1}
	For all $\phi_h \in Z_h^{k}(K)$, the operator $\boldsymbol{\Pi}_{K,\text{mat}}^{k-2}$ defined as:
	\begin{align*}
		\big(\boldsymbol{\Pi}_{K,\text{mat}}^{k-2} \nabla \mathrm{\bf{curl}}(\phi_h),\boldsymbol{q}_{k-2}\big)_{K} &= \big(\nabla \mathrm{\bf{curl}}(\phi_h),\boldsymbol{q}_{k-2}\big)_{K} \qquad \forall \boldsymbol{q}_{k-2} \in [\mathbb{P}_{k-2}(K)]^{2\times 2},
	\end{align*}
	is fully computable using the degrees of freedom $\boldsymbol{D_1}$-$\boldsymbol{D_5}$.
\end{lemma}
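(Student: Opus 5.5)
We must show that for each matrix-valued polynomial $\boldsymbol{q}_{k-2}\in[\mathbb{P}_{k-2}(K)]^{2\times 2}$ the quantity $\big(\nabla \curl(\phi_h),\boldsymbol{q}_{k-2}\big)_K$ can be evaluated from $\boldsymbol{D_1}$-$\boldsymbol{D_5}$. Once this holds, $\boldsymbol{\Pi}_{K,\text{mat}}^{k-2}\nabla\curl(\phi_h)$ is obtained by solving the small, invertible local mass-matrix system in the scaled monomial basis. The plan is to integrate by parts once, moving the derivative from $\curl(\phi_h)$ onto the polynomial:
\begin{align*}
\big(\nabla \curl(\phi_h),\boldsymbol{q}_{k-2}\big)_K = -\big(\curl(\phi_h),\operatorname{div}\boldsymbol{q}_{k-2}\big)_K + \int_{\partial K} \boldsymbol{q}_{k-2}\n_K\cdot \curl(\phi_h),
\end{align*}
where $\operatorname{div}$ acts row-wise. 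This produces $\operatorname{div}\boldsymbol{q}_{k-2}\in[\mathbb{P}_{k-3}(K)]^2$.

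\textbf{Boundary term.} Since $\curl(\phi_h)=(\partial_2\phi_h,-\partial_1\phi_h)$ and $\nabla\phi_h|_{\partial K}$ is continuous, on each edge $E$ we can write $\nabla\phi_h|_E=\partial_{\n_K^E}\phi_h\,\n_K^E+\partial_{\boldsymbol{t}}\phi_h\,\boldsymbol{t}$.
\begin{itemize}
\item The tangential derivative is the derivative of $\phi_h|_E\in\mathbb{P}_r(E)$. This polynomial is uniquely determined by the vertex values and vertex tangential derivatives ($\boldsymbol{D_1}$, $\boldsymbol{D_2}$) together with the edge moments $\boldsymbol{D_3}$.
\item The normal derivative $\partial_{\n_K^E}\phi_h|_E\in\mathbb{P}_{k-1}(E)$ is fixed by its two vertex values (from $\boldsymbol{D_2}$) and the $k-2$ moments in $\boldsymbol{D_4}$.
\end{itemize}
Hence $\nabla\phi_h$ is known explicitly on $\partial K$, and the boundary integral is computable.

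\textbf{Volume term.} I would integrate by parts again:
\begin{align*}
\big(\curl(\phi_h),\boldsymbol{p}\big)_K=\big(\phi_h,\operatorname{rot}\boldsymbol{p}\big)_K+\int_{\partial K}\phi_h\,\boldsymbol{p}\cdot\boldsymbol{t}_K
\end{align*}
(up to sign conventions), with $\boldsymbol{p}=\operatorname{div}\boldsymbol{q}_{k-2}\in[\mathbb{P}_{k-3}(K)]^2$. The boundary part uses only $\phi_h|_{\partial K}$, which we have just shown is known. The remaining volume part tests $\phi_h$ against $\operatorname{rot}\boldsymbol{p}\in\mathbb{P}_{k-4}(K)$. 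It is exactly a $\boldsymbol{D_5}$ moment, or zero when $k\le 3$. Alternatively, it can be read off from $\Pi_K^{k-2}\phi_h$, which is computable by Lemma~\ref{Lemma 5.3.1.1.0}.

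The only delicate point is checking that the edge traces of $\phi_h$ and $\partial_{\n}\phi_h$ are genuinely recovered from the degrees of freedom, particularly $\phi_h|_E\in\mathbb{P}_r(E)$ with $r=\max\{3,k\}$. A count confirms this: Hermite data supply four conditions and $\boldsymbol{D_3}$ supplies $k-3$ more (for $k\ge4$), totaling $r+1$. With that verified, every term in the identity is computable, which proves the lemma.
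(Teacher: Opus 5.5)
Your proposal is correct and follows essentially the same route as the paper. Two integrations by parts reduce $\big(\nabla\curl(\phi_h),\boldsymbol{q}_{k-2}\big)_K$ to a volume moment of $\phi_h$ against a polynomial of degree $k-4$ (a combination of $\boldsymbol{D_5}$ values, vanishing for $k\le 3$) plus edge integrals of $\phi_h$ and $\nabla\phi_h$ against polynomials, which depend only on $\boldsymbol{D_1}$--$\boldsymbol{D_4}$; your explicit count showing that $\phi_h|_E\in\mathbb{P}_r(E)$ and $\partial_{\n_K^E}\phi_h|_E\in\mathbb{P}_{k-1}(E)$ are recovered from the degrees of freedom makes precise a step the paper states without detail.
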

\begin{proof}
		Let $\boldsymbol{q}_{k-2} \in [\mathbb{P}_{k-2}(K)]^{2\times 2}$, then by using integration by parts and the definition of projection operator $\boldsymbol{\Pi}_{K,\text{mat}}^{k-2}$, we have
		\begin{align*}
				\big(\boldsymbol{\Pi}_{K,\text{mat}}^{k-2} \nabla \curl(\phi_h),\boldsymbol{q}_{k-2}\big)_{K} &= \big(\nabla \curl(\phi_h),\boldsymbol{q}_{k-2}\big)_{K}\\
				&= - \big(\curl(\phi_h),\nabla  \cdot \boldsymbol{q}_{k-2}\big)_{K} + \big(\curl(\phi_h), \boldsymbol{q}_{k-2} \boldsymbol{n}_{K}\big)_{\partial K}\\
				&= \big(\phi_h,\curl(\nabla  \cdot \boldsymbol{q}_{k-2})\big)_{K} -  \big(\phi_h, (\nabla  \cdot \boldsymbol{q}_{k-2})\cdot \boldsymbol{t}_{K}\big)_{\partial K} \\
				& \quad + \big(\curl(\phi_h), \boldsymbol{q}_{k-2} \boldsymbol{n}_{K}\big)_{\partial K} := I_{K} + I_{E}^{1} + I_{E}^{2}.
			\end{align*}
			
	The term $I_{K}$ is computable using degrees of freedom $\boldsymbol{D_5}$ for $k \ge 4$, while $I_{E}^{1}$ is computable using degrees of freedom $\boldsymbol{D_1}$ to $\boldsymbol{D_4}$ for $k \ge 3$ and $I_{E}^{2}$ is computable using degrees of freedom $\boldsymbol{D_1}$ to $\boldsymbol{D_4}$ for $k \ge 2$.
\end{proof}

	For \( k \ge 2 \), we define a projection onto the polynomial space \( [\mathbb{P}_{k-1}(K)]^{2} \). For any \( \v \in \boldsymbol{L}^{2}(K) \), the operator \( \boldsymbol{\Pi}_{K}^{k-1} \) mapping \( \boldsymbol{L}^{2}(K) \) to \( [\mathbb{P}_{k-1}(K)]^{2} \) is defined as:
	\begin{align*}
		\big(\boldsymbol{q}_{k-1}, \v - \boldsymbol{\Pi}_{K}^{k-1} \v\big)_{K} = 0 \qquad \forall \boldsymbol{q}_{k-1} \in [\mathbb{P}_{k-1}(K)]^{2}.
	\end{align*}
	\indent We find that the vector function $\boldsymbol{\Pi}_{K}^{k-1} \curl (\phi_h) \in [\mathbb{P}_{k-1}(K)]^{2}$ for any $\phi_h \in Z_{h}^{k}(K)$ can be calculated directly from the degrees of freedom $\boldsymbol{D_1}$-$\boldsymbol{D_5}$ using integration by parts. Next, we will show a stability property of the projector $\boldsymbol{\Pi}_{K}^{k-1}$. To do this, we will use the inverse inequality for polynomials on polygonal elements, which holds true under assumption $\boldsymbol{\mathcal{A}2}$ (cf. Remark~6.1,\cite{STABVEM}).
	\begin{lemma}\label{Lemma 5.3.1.3}
		For a positive constant $\hat{C}$ (independent of $h$), if assumption $\boldsymbol{\mathcal{A}2}$ holds, then we have
		\begin{align*}
			|q|_{1,K} \le \hat{C} h_{K}^{-1} \|q\|_{0,K} \qquad \forall q \in \mathbb{P}_{\ell}(K) \quad \text{and} \quad \ell \ge 1.
		\end{align*}
	\end{lemma}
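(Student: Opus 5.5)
The inequality in Lemma~\ref{Lemma 5.3.1.3} is the standard inverse estimate for polynomials. I would prove it by a scaling argument combined with equivalence of norms on finite-dimensional spaces. Assumption $\boldsymbol{\mathcal{A}2}$ is what supplies uniformity over the admissible polygon shapes.

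\emph{Step 1 (scaling).} Map $K$ to $\hat K := \{(\x-\x_K)/h_K : \x \in K\}$. The rescaled element has diameter comparable to $1$, contains a ball of radius $C_{\mscr{T}_h}$, and is star-shaped with respect to every point of that ball. For $q \in \mathbb{P}_\ell(K)$, put $\hat q(\hat\x) = q(\x_K + h_K\hat\x) \in \mathbb{P}_\ell(\hat K)$. Then
\[
|q|_{1,K} = |\hat q|_{1,\hat K}, \qquad \|q\|_{0,K} = h_K \|\hat q\|_{0,\hat K}.
\]
It therefore suffices to show $|\hat q|_{1,\hat K} \le \hat C \|\hat q\|_{0,\hat K}$, with $\hat C$ depending only on $\ell$ and $C_{\mscr{T}_h}$.

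\emph{Step 2 (uniform bound on the reference element).} Let $B$ be the ball of radius $\rho = C_{\mscr{T}_h}$ from $\boldsymbol{\mathcal{A}2}$, centred at some $\hat\x_0$. Since diam$(\hat K)\le 1$, we have $B \subset \hat K \subset B_R := B(\hat\x_0,1)$. On $\mathbb{P}_\ell(\R^2)$ both $\|\cdot\|_{0,B}$ and $|\cdot|_{1,B_R}$ are well defined, and $\|\cdot\|_{0,B}$ is a norm, because a polynomial vanishing on a ball vanishes identically. By finite dimensionality, together with translation and rotation invariance,
\[
|\hat q|_{1,\hat K} \le |\hat q|_{1,B_R} \le C(\ell,\rho)\,\|\hat q\|_{0,B} \le C(\ell,\rho)\,\|\hat q\|_{0,\hat K}.
\]
The constant depends only on $\ell$ and $\rho$, since one may take $\hat\x_0 = 0$ after translating and the ratio of radii is fixed.

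\emph{Step 3 (conclusion).} Scaling back gives $|q|_{1,K} \le \hat C h_K^{-1}\|q\|_{0,K}$ with $\hat C = C(\ell,C_{\mscr{T}_h})$, which is independent of $h$ and of the particular polygon.

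The only delicate point is the uniformity of the constant over the family of polygons, since a plain norm-equivalence argument on a single $\hat K$ would give a shape-dependent constant. The sandwich $B \subset \hat K \subset B_R$ resolves this: it reduces the comparison to two concentric balls of fixed radii, which is exactly where $\boldsymbol{\mathcal{A}2}$ enters. Assumption $\boldsymbol{\mathcal{A}1}$ is not needed here.
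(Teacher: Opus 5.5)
Your proof is correct. The paper gives no argument of its own here; it only cites Remark~6.1 of Ref.~\refcite{STABVEM}. Your scaling argument, with the sandwich $B\subset\hat K\subset B_R$ and norm equivalence on $\mathbb{P}_\ell$ between two concentric balls of fixed radii, is the standard proof behind that citation. It also shows that only the inscribed ball from $\boldsymbol{\mathcal{A}2}$ is used, not star-shapedness and not $\boldsymbol{\mathcal{A}1}$, provided $h_K$ is read as the diameter of $K$ (as in $\boldsymbol{\mathcal{A}1}$) rather than the largest side length the paper mentions earlier.
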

	
	By applying Lemma~\ref{Lemma 5.3.1.3}, the projector \(\boldsymbol{\Pi}_{K}^{k-1}\) has a stability property. Specifically, for any element \(K\) in the mesh \(\mscr{T}_{h}\) and for all functions \(\phi_h\) in \(Z_{h}^{k}(K)\), there exists a constant \(C_N\) (which does not depend on \(K\) or \(h\)) such that:
	\begin{align}\label{3.5.1.5}
		\big|\boldsymbol{\Pi}_{K}^{k-1} \curl (\phi_h)\big|_{1,K} \le C_{N} |\phi_h|_{2,K}.
	\end{align}
	
	\blue{We now state the following well-known Poincar\'e-Friedrichs inequalities on Lipschitz domains, which will be useful for the forthcoming analysis (see below Theorems~\ref{thrm:SD:stab} and~\ref{thrm:SNabla:stab}).
		\begin{align}
		h^{-1}_{K} \|v\|_{0,K} &\lesssim |v|_{1,K} + h^{-1}_{K}\left| \int_{K} v \, {\rm d}s\right| && \forall v \in H^1(K), \label{Poincare1:H1}\\ 	
			h^{-1}_{K} \|v\|_{0,K} &\lesssim |v|_{1,K} + h^{-1}_{K}\left| \int_{\partial K} v \, {\rm d}s\right| && \forall v \in H^1(K), \label{Poincare2:H1}\\ 
			h^{-2}_{K} \|v\|_{0,K}& \lesssim |v|_{2,K} + h^{-2}_{K}\left| \int_{\partial K} v \, {\rm d}s\right| + h^{-1}_{K}\left| \int_{\partial K} \nabla v \, {\rm d}s\right| && \forall v \in H^2(K).\label{Poincare:H2}
	\end{align}}
	
	Next, for \(k \ge 2\), we define an additional projector, \(\Pi_{K}^{k,\nabla^{\perp}}\), which maps \(Z_{h}^{k}(K)\) to \(\mathbb{P}_{k}(K)\). This is determined by solving a specific local problem for all \(\phi_h \in Z_{h}^{k}(K)\):
	\begin{align}
		\label{revised1} \Big(\curl \Big(\Pi_{K}^{k,\nabla^{\perp}} \phi_h \Big), \curl (q)\Big)_{K}&= \big(\curl (\phi_h), \curl (q)\big)_{K} \quad \forall q \in \mathbb{P}_{k}(K),\\
		\label{revised2} \blue{\Pi^{0}_{\partial K}(\Pi_{K}^{k,\nabla^{\perp}} \phi_h)} &= \blue{\Pi^{0}_{\partial K}(\phi_h),}
	\end{align}
	where $\blue{\Pi^{0}_{\partial K}(\cdot)}$ is same as \eqref{5.3.1.2.0}. 
	
	
	The next result indicates that operator $\Pi_{K}^{k,\nabla^{\perp}}$ can be completely calculated using the sets $\boldsymbol{D_1}$ to $\boldsymbol{D_5}$, as mentioned in Lemma~3.4\cite{MORABRINKMAN}.
	\begin{lemma}\label{Lemma 5.3.1.4}
		The operator $\Pi_{K}^{k,\nabla^{\perp}}: Z_{h}^{k}(K) \rightarrow \mathbb{P}_{k}(K) \subseteq Z_{h}^{k}(K)$ is determined for all $\phi_h \in Z_h^{k}(K)$, by using the data provided from the degrees of freedom $\boldsymbol{D_1}$ to $\boldsymbol{D_5}$.
	\end{lemma}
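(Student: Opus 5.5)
To show that $\Pi_{K}^{k,\nabla^{\perp}}\phi_h$ is computable, it suffices to show that the right-hand sides of \eqref{revised1}--\eqref{revised2} can be evaluated from $\boldsymbol{D_1}$--$\boldsymbol{D_5}$. Once that is done, $\Pi_{K}^{k,\nabla^{\perp}}\phi_h$ is obtained by solving a finite linear system in a scaled-monomial basis of $\mathbb{P}_k(K)$.

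\textbf{Well-posedness of the local problem.} We have $\curl(q)\cdot\curl(p)=\nabla q\cdot\nabla p$. Hence \eqref{revised1} is the $H^1$-seminorm Galerkin problem on $\mathbb{P}_k(K)$, whose kernel is the constants. Condition \eqref{revised2} fixes this constant, so the system is uniquely solvable.

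\textbf{The mean-value condition \eqref{revised2}.} The integral $\int_{\partial K}\phi_h$ is computable because $\phi_h|_E\in\mathbb{P}_r(E)$ on each edge $E$. For $r=\max\{3,k\}$, this polynomial is determined by the vertex values and vertex tangential derivatives ($\boldsymbol{D_1}$, $\boldsymbol{D_2}$) together with the edge moments of order up to $k-4$ ($\boldsymbol{D_3}$). Since $\phi_h|_E$ is then known explicitly, its edge integrals can be evaluated exactly.

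\textbf{The right-hand side of \eqref{revised1}.} For $q\in\mathbb{P}_k(K)$, one integration by parts gives
\begin{align*}
\big(\curl(\phi_h),\curl(q)\big)_K=(\nabla\phi_h,\nabla q)_K=-(\phi_h,\Delta q)_K+\sum_{E\subset\partial K}\big(\phi_h,\partial_{\n_K^E}q\big)_E .
\end{align*}
The boundary terms are computable by the argument of the previous step, since $\phi_h|_E$ is an explicitly known polynomial.

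The volume term $(\phi_h,\Delta q)_K$ has $\Delta q\in\mathbb{P}_{k-2}(K)$. I expect this to be the only real obstacle, because $\boldsymbol{D_5}$ provides moments against $\mathbb{P}_{k-4}(K)$ only. I would split $\Delta q$ into a component in $\mathbb{P}_{k-4}(K)$ and components in $\mscr{M}_{k-3}^*(K)\cup\mscr{M}_{k-2}^*(K)$.
\begin{itemize}
\item The $\mathbb{P}_{k-4}(K)$ part is handled directly by $\boldsymbol{D_5}$.
\item For the higher-degree monomials $q^*$, the defining constraint of $Z_h^k(K)$ gives $(q^*,\phi_h)_K=(q^*,\Pi_K^{k,D}\phi_h)_K$. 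By Lemma~\ref{Lemma 5.3.1.1}, $\Pi_K^{k,D}\phi_h$ is a computable polynomial, so these integrals are known exactly.
\end{itemize}
Equivalently, this is the statement of Lemma~\ref{Lemma 5.3.1.1.0}: $(\phi_h,\Delta q)_K=(\Pi_K^{k-2}\phi_h,\Delta q)_K$, and the right-hand side is computable. For $k=2,3$ the space $\mathbb{P}_{k-4}(K)$ is empty, and every moment comes from the enhancement constraint.

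Assembling these pieces, the full linear system for $\Pi_{K}^{k,\nabla^{\perp}}\phi_h$ depends only on $\boldsymbol{D_1}$--$\boldsymbol{D_5}$, which proves the claim.
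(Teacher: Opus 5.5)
Your proposal is correct. It is the same argument the paper relies on: the paper gives no proof of its own and defers to Lemma~3.4 of Ref.~\refcite{MORABRINKMAN}, where one integration by parts turns the right-hand side of \eqref{revised1} into boundary terms computable from the explicitly known edge traces $\phi_h|_E\in\mathbb{P}_r(E)$ ($\boldsymbol{D_1}$--$\boldsymbol{D_4}$) plus the volume moment $(\phi_h,\Delta q)_K$, which is handled by $\boldsymbol{D_5}$ and the enhancement constraint exactly as in Lemma~\ref{Lemma 5.3.1.1.0}; the same edge traces also give \eqref{revised2}. This is also the pattern of the paper's own computability proofs for $\Pi_K^{k,D}$ and $\boldsymbol{\Pi}_{K,\text{mat}}^{k-2}$ in Lemmas~\ref{Lemma 5.3.1.1} and~\ref{Lemma 5.3.1.1.1}.
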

	\indent We now introduce the global virtual space to find the solution to problem \eqref{5.2.1.2}. Given a decomposition $\mscr{T}_h$ of the domain $\Omega$ into polygonal elements $K$, we define
	\begin{align*}
		Z_h^{k} := \left\{\phi_h \in Z: \phi_h|_K \in Z_h^{k}(K) \quad \forall K \in \mscr{T}_h \right\}.
	\end{align*}
	\subsection{Discretization of Control}
	The most basic option for control discretization is the piecewise constant approximation, where the control is taken to be cellwise constant over the mesh. Another widely used choice is the piecewise linear approximation, which represents the control via continuous, piecewise affine functions. Regarding convergence, it is known that these approximations yield rates of $O(h)$ (see Refs.~\refcite{MR686788} and \refcite{MR555382}) and $O(h^{3/2})$ (see Ref.~\refcite{MR1993702}) for piecewise constant and linear discretizations, respectively. In contrast to the above explicit schemes, we use a \textit{variational discretization} approach introduced in Ref.~\refcite{MHINZE}. The key idea is that the control is not discretized directly. Instead, its discrete counterpart is implicitly determined through the discretized adjoint equation and the first-order optimality system, in particular via the projection formula \eqref{5.2.1.8}. The discrete admissible space $\mscr{U}_{\boldsymbol{ad}}^{\boldsymbol{h}}$ coincides with $\mscr{U}_{\boldsymbol{ad}}$. This implicit treatment avoids restrictions on the control space and often leads to improved accuracy. This scheme has been used for OCPs in Refs.~\refcite{CVEM2}, \refcite{CVEM3}, \refcite{CVEM1} and \refcite{OCPBH} with a virtual element discretization for the state and adjoint problems.
	\subsection{Local and Global Discrete Forms}
	In this subsection, we will build the discrete counterparts of the continuous local bilinear forms and the right hand side, based on the projection operators introduced earlier in Sec.~\ref{Virtual Spaces and Polynomial Projections Operator}.
	
	Let $\mscr{S}_{D}^{K}(\cdot,\cdot)$ and $\mscr{S}_{\curl}^{K}(\cdot,\cdot)$ be any symmetric and positive definite bilinear forms which satisfy the following conditions: 
	\begin{subequations}\label{5.3.2.1}
		\begin{align}
			\label{5.3.2.1a}\mscr{A}^{K}(\phi_h,\phi_h) &\simeq \mscr{S}_{D}^{K}(\phi_h,\phi_h) \qquad \forall\phi_h \in Z_h^{k}(K)  \cap {\rm Ker}(\Pi_{K}^{k,D}),\\
			\label{5.3.2.1b} \mscr{C}^{K}(\phi_h,\phi_h) &\simeq \mscr{S}_{\curl}^{K}(\phi_h,\phi_h)  \quad \; \forall\phi_h \in Z_h^{k}(K)  \cap {\rm Ker}(\Pi_{K}^{k,\nabla^{\perp}}).
		\end{align}
	\end{subequations}
    
Now, for all $\psi_h, \phi_h \in Z_h^{k}(K)$, we define the following local discrete bilinear forms:
	\begin{subequations}\label{5.3.2.2}
		\begin{align}
			\label{5.3.2.2a} \mscr{A}^{h,K}(\psi_h,\phi_h) &:= \mscr{A}^{K}\big(\Pi_{K}^{k,D} \psi_h,\Pi_{K}^{k,D} \phi_h\big) \hspace{-0.5mm}+\hspace{-0.5mm} \mscr{S}_{D}^{K}\big(\psi_h \hspace{-0.5mm}-\hspace{-0.5mm}\Pi_{K}^{k,D} \psi_h,\phi_h \hspace{-0.5mm}-\hspace{-0.5mm}\Pi_{K}^{k,D} \phi_h\big)\\
			\label{5.3.2.2b} 
			\widetilde{\mscr{B}}^{h,K}(\psi_h,\phi_h) &:= \big(\big( \boldsymbol{\Pi}_{K,\text{mat}}^{k-2}  \nabla \curl(\psi_h) \big) \bbeta , \boldsymbol{\Pi}_{K}^{k-1} \curl(\phi_h)\big)_{K} \\
			\nonumber	\mscr{C}^{h,K}(\psi_h,\phi_h) &:= \Big(\Big(\sigma-\frac{1}{2}(\nabla \cdot \boldsymbol{\beta})\Big)  \boldsymbol{\Pi}_{K}^{k-1} \curl(\psi_h), \boldsymbol{\Pi}_{K}^{k-1} \curl(\phi_h)\Big)_{K} \\
			\label{5.3.2.2c} & \ \ \ \ \ + \mscr{S}_{\curl}^{K}\Big(\psi_h -\Pi_{K}^{k,\nabla^\perp} \psi_h,\phi_h -\Pi_{K}^{k,\nabla^\perp} \phi_h\Big).
		\end{align}
	\end{subequations}
	
\blue{To complete the definition of local bilinear forms \eqref{5.3.2.2a} and \eqref{5.3.2.2c}, we define the stabilizer forms $\mscr{S}_{D}^{K}(\cdot,\cdot)$ and $\mscr{S}_{\curl}^{K}(\cdot,\cdot)$ from Refs.~\refcite{VEMCN}, \refcite{VEMPB} and \refcite{MORABRINKMAN} satisfying \eqref{5.3.2.1} as: 
\begin{align}
		\mscr{S}_{D}^{K}(\psi_h,\phi_h) &:= \nu h_{K}^{-2} \sum_{j=1}^{N_{K}^{\text{dof}}} \boldsymbol{D}_i(\psi_h) \boldsymbol{D}_{i}(\phi_h) && \forall \psi_h, \phi_h \in Z_{h}^{k}(K), \label{stab:Delta}\\
		\mscr{S}_{\curl}^{K}(\psi_h,\phi_h) &:= \rho_{\bbeta,\sigma}^{K} \sum_{j=1}^{N_{K}^{\text{dof}}} \boldsymbol{D}_{i}(\psi_h) \boldsymbol{D}_{i}(\phi_h) && \forall \psi_h, \phi_h \in Z_{h}^{k}(K), \label{stab:nabla}
	\end{align}
where for each polygon $K \in \mscr{T}_{h}$:
	\begin{itemize}
		\item $N_{K}^{\text{dof}}$ denote the number of degrees freedom of $Z_{h}^{k}(K)$;
		\item The local degrees of freedom are collected in the vector
		$\boldsymbol{D}=\{\boldsymbol{D}_i \}_{i=1}^{N_{K}^{\text{dof}} }$,
		where $\boldsymbol{D}_i$ associates to each sufficiently smooth function
		$\phi$ its $i$-th local degree of freedom $\boldsymbol{D}_i(\phi)$;
		\item the parameter $\rho^K_{\boldsymbol{\beta},\sigma}$ is a multiplicative factor introduced to account for the physical magnitude.
	\end{itemize}}
	

\blue{In the following subsection, we establish stability bounds for the stabilizing
	forms $\mscr{S}_{D}^{K}(\cdot,\cdot)$ and
	$\mscr{S}_{\curl}^{K}(\cdot,\cdot)$. These bounds will be instrumental in
	establishing the well-posedness of the scheme \eqref{5.3.3.1}.}
\subsection{Stability bounds}
\blue{Although the stability properties of the stabilization form
	$\mscr{S}_{D}^{K}(\cdot,\cdot)$ were established in
	\cite{MR4521659} (see also \cite{HuangYu2021}), we recall them here in
	the form needed for the subsequent analysis. These estimates will be
	instrumental in deriving the stability properties of
	$\mscr{S}_{\curl}^{K}(\cdot,\cdot)$.}
	
\blue{We first establish the stability estimates for
	$\mscr{S}_{D}^{K}(\cdot,\cdot)$ and then use them to derive the
	corresponding bounds for $\mscr{S}_{\curl}^{K}(\cdot,\cdot)$.}	
	\blue{\begin{proposition}\label{Lemma:SD:stab1}
			Let $K \in  \mscr{T}_{h}$ and $\mscr{S}_{D}^{K}(\cdot,\cdot)$ be the bilinear form defined in~\eqref{stab:Delta}. Then, under assumptions~$\boldsymbol{\mathcal{A}1}$-$\boldsymbol{\mathcal{A}2}$, for all $\phi_h \in Z_h^k(K)$, the following property holds:
		$$\mscr{S}_{D}^{K}(\phi_h,\phi_h) \lesssim h^{-4}_{K}\|\phi_h\|^2_{0,K}  +h^{-2}_{K}|\phi_h|^2_{1,K}+ |\phi_h|^2_{2,K}.$$
	\end{proposition}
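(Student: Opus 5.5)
The plan is to bound each degree of freedom in $\boldsymbol{D_1}$--$\boldsymbol{D_5}$ separately and then sum. There are at most $N_K^{\text{dof}}$ terms, and this number is uniformly bounded under $\boldsymbol{\mathcal{A}1}$--$\boldsymbol{\mathcal{A}2}$. The factor $\nu h_K^{-2}$ in \eqref{stab:Delta} is absorbed into the hidden constant (with $\nu$ fixed), so it suffices to show
\begin{equation*}
h_K^{-2}|\boldsymbol{D}_i(\phi_h)|^2 \lesssim h_K^{-4}\|\phi_h\|_{0,K}^2 + h_K^{-2}|\phi_h|_{1,K}^2 + |\phi_h|_{2,K}^2
\end{equation*}
for every $i$. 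Each bound will come from a scaled trace or embedding inequality on $K$. These hold uniformly because, by $\boldsymbol{\mathcal{A}2}$, $K$ is star-shaped with respect to a ball of radius comparable to $h_K$, and by $\boldsymbol{\mathcal{A}1}$, edges have length $h_E\simeq h_K$.

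The interior moments $\boldsymbol{D_5}$ and the edge moments $\boldsymbol{D_3}$ are handled as follows. For $\boldsymbol{D_5}$, Cauchy--Schwarz together with $\|m\|_{0,K}\lesssim h_K$ for scaled monomials gives $|h_K^{-2}\int_K m\phi_h|\lesssim h_K^{-1}\|\phi_h\|_{0,K}$. Squaring and multiplying by $h_K^{-2}$ yields $h_K^{-4}\|\phi_h\|_{0,K}^2$. For $\boldsymbol{D_3}$, we have $|h_E^{-1}\int_E m\phi_h|\lesssim h_E^{-1/2}\|\phi_h\|_{0,E}$. The scaled trace inequality $\|v\|_{0,\partial K}^2\lesssim h_K^{-1}\|v\|_{0,K}^2+h_K|v|_{1,K}^2$ then gives $h_K^{-2}|\boldsymbol{D}_i|^2\lesssim h_K^{-4}\|\phi_h\|_{0,K}^2+h_K^{-2}|\phi_h|_{1,K}^2$. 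The normal-derivative moments $\boldsymbol{D_4}$ are treated in the same way, applying the trace inequality to $\nabla\phi_h$: here $|\int_E m\,\partial_{\n}\phi_h|\lesssim h_E^{1/2}\|\nabla\phi_h\|_{0,E}$, hence $h_K^{-2}|\boldsymbol{D}_i|^2\lesssim h_K^{-2}|\phi_h|_{1,K}^2+|\phi_h|_{2,K}^2$.

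The main obstacle is the pointwise degrees of freedom $\boldsymbol{D_1}$ and $\boldsymbol{D_2}$, since point evaluation is not controlled by $H^1$ in two dimensions. For $\boldsymbol{D_1}$ I would use the scaled Sobolev embedding $H^2(K)\hookrightarrow C^0(\overline K)$ on the star-shaped domain, namely
\begin{equation*}
\|\phi_h\|_{\infty,K}\lesssim h_K^{-1}\|\phi_h\|_{0,K}+|\phi_h|_{1,K}+h_K|\phi_h|_{2,K}.
\end{equation*}
This is obtained by mapping to a reference configuration of unit size; the constant is uniform because star-shapedness with respect to a ball of radius $\simeq h_K$ gives uniform Sobolev/Stein extension constants. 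Squaring and multiplying by $h_K^{-2}$ gives exactly the three terms of the claim. For $\boldsymbol{D_2}$, $h_{\v_i}|\nabla\phi_h(\v_i)|$ involves only $\nabla\phi_h\in H^1(K)$, so the embedding does not apply directly. Here I would exploit the virtual structure: on $\partial K$, $\nabla\phi_h$ is continuous and piecewise polynomial of fixed degree (tangential part in $\mathbb{P}_{r-1}(E)$, normal part in $\mathbb{P}_t(E)$). An inverse estimate on each edge, combined with $\boldsymbol{\mathcal{A}1}$, gives $|\nabla\phi_h(\v_i)|\lesssim h_E^{-1/2}\|\nabla\phi_h\|_{0,E}$. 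The trace inequality then yields $h_{\v_i}^2 h_K^{-2}|\nabla\phi_h(\v_i)|^2\lesssim h_K^{-2}|\phi_h|_{1,K}^2+|\phi_h|_{2,K}^2$, using $h_{\v_i}\simeq h_K$, which holds by mesh regularity. The same edge inverse-estimate route could replace the Sobolev embedding for $\boldsymbol{D_1}$ as well, since $\phi_h|_E\in\mathbb{P}_r(E)$.

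Summing over all degrees of freedom concludes the proof. The only delicate point is verifying uniformity of the trace, inverse and embedding constants under $\boldsymbol{\mathcal{A}1}$--$\boldsymbol{\mathcal{A}2}$, for which I would rely on the arguments of \cite{MR4521659,HuangYu2021}.
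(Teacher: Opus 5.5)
Your proposal is correct and follows the paper's proof essentially step for step. The interior moments $\boldsymbol{D_5}$ are handled by Cauchy--Schwarz with $|K|\simeq h_K^2$. The vertex values $\boldsymbol{D_1}$ use the scaled Sobolev inequality. The vertex gradients $\boldsymbol{D_2}$ use the normal/tangential splitting together with an edge inverse estimate and the scaled trace inequality. The edge moments $\boldsymbol{D_3}$--$\boldsymbol{D_4}$ use Cauchy--Schwarz plus the trace inequality. Your side remark that $\boldsymbol{D_1}$ can also be treated by the edge inverse estimate, since $\phi_h|_E\in\mathbb{P}_r(E)$, is valid, and for that contribution it even avoids the $|\phi_h|_{2,K}$ term.
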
}
\begin{proof}
\blue{From definition~\eqref{stab:Delta}, we have
\begin{equation}\label{split:I}
	\begin{aligned}
		\mscr{S}_{D}^{K}(\phi_h,\phi_h)
		&=\mscr{S}^{\boldsymbol{\partial}}(\phi_h,\phi_h)
		+\mscr{S}^{\boldsymbol{o}}(\phi_h,\phi_h)
		=
		h_{\E}^{-2}
		\sum_{i=1}^{N_{\partial}}
		\left(\boldsymbol{D}^{\boldsymbol{\partial}}_{i}(\phi_h)\right)^2
		+
		h_{\E}^{-2}
		\sum_{j=1}^{N_{o}}
		\left(\boldsymbol{D}^{\boldsymbol{o}}_{j}(\phi_h)\right)^2,
	\end{aligned}
\end{equation}
where $N_{\partial}=(3+k_E+k_{\bn})N_E, \: 
N_{o}=\dim(\mscr{M}_{k-4}(\E)),$
with $k_e=\dim(\mscr{M}_{t-3}(E))$ and
$k_{\bn}=\dim(\mscr{M}_{r-4}(E)).$ Here,
$\{\boldsymbol{D}^{\boldsymbol{\partial}}_{i}\}_{i=1}^{N_{\partial}}$
denotes the collection of boundary degrees of freedom
$\boldsymbol{D_1}$--$\boldsymbol{D_4}$, whereas
$\{\boldsymbol{D}^{\boldsymbol{o}}_{j}\}_{j=1}^{N_{o}}$
denotes the collection of interior degrees of freedom
$\boldsymbol{D_5}
$.
First, we will analyze the term $\mscr{S}^{\boldsymbol{o}}(\phi_h,\phi_h)$. Indeed,  by recalling that the polynomial basis  $\{m_j\}_{j=1}^{N_0}$ in $\boldsymbol{D^o}$ satisfies $\|m_j\|_{L^{\infty}(E)} \leq 1$, using the 
Cauchy-Schwarz inequality, and the fact $|\E| \simeq h^2_{\E}$, we obtain
\begin{equation}\label{eq:S^o}
	\begin{split}
		\mscr{S}^{\boldsymbol{o}}(\phi_h,\phi_h)
		&= h^{-2}_K\sum_{j=1}^{N_{o}}  \left( h^{-2}_K \int_K \phi_h \, m_j \, {\rm d}K  \right)^2 	\\
		&\leq h^{-2}_K h^{-4}_K \left(\int_K \phi_h \,\, {\rm d}K  \right)^2 \, \sum_{j=1}^{N_{o}} \|m_j\|_{L^{\infty}(K)}^2 \\
		& \lesssim h^{-2}_K h^{-4}_K |K| \|\phi_h\|_{0,K}^2 
		\lesssim  h^{-4}_K\|\phi_h\|_{0,K}^2.
	\end{split}
\end{equation}
Now, we will relabel the index for the term $\mscr{S}^{\boldsymbol{\partial}}(\phi_h,\phi_h)$, as follows
\begin{equation}\label{eq:S^partial}
	\begin{split}
		\mscr{S}^{\boldsymbol{\partial}}(\phi_h,\phi_h)
		&=h^{-2}_K\sum_{i_1=1}^{N_E} \big( \boldsymbol{D}^{\boldsymbol{\partial},\vb}_{i_1}(\phi_h) \big)^2 + h^{-2}_K\sum_{i_2=1}^{2N_E} \big(\boldsymbol{D}^{\boldsymbol{\partial}, \nabla}_{i_2}(\phi_h) \big)^2\\
		& = h^{-2}_K\sum_{i_1=1}^{N_E} \big( \boldsymbol{D}^{\boldsymbol{\partial},\vb}_{i_1}(\phi_h) \big)^2 + h^{-2}_K\sum_{i_2=1}^{2N_E} \big(\boldsymbol{D}^{\boldsymbol{\partial}, \nabla}_{i_2}(\phi_h) \big)^2\\
		&\quad + h^{-2}_K\sum_{i_3=1}^{k_{E}N_E} \big( \boldsymbol{D}^{\boldsymbol{\partial},I}_{i_3}(\phi_h) \big)^2
		+h^{-2}_K \sum_{i_4=1}^{k_{\bn}N_E} \big( \boldsymbol{D}^{\boldsymbol{\partial},II}_{i_4}(\phi_h) \big)^2\\
		&=:\mscr{S}^{\boldsymbol{\partial},\vb}(\phi_h,\phi_h) +\mscr{S}^{\boldsymbol{\partial},\nabla}(\phi_h,\phi_h) + \mscr{S}^{\boldsymbol{\partial},I}(\phi_h,\phi_h)+ \mscr{S}^{\boldsymbol{\partial},II}(\phi_h,\phi_h).
	\end{split}
\end{equation}
The next step is to bound each term from above. We begin with the first term. By the Sobolev inequality \cite[Equation (2.4)]{VEMSE}, we have
\begin{equation*}
	\begin{split}
	\mscr{S}^{\boldsymbol{\partial},\vb}(\phi_h,\phi_h)
		&\lesssim  h^{-2}_{K}\|\phi_h\|^2_{L^{\infty}(\dE)} \lesssim h^{-2}_{K} \|\phi_h\|^2_{L^{\infty}(\E)} \\
		&	 \lesssim  h^{-2}_{\E}(h^{-2}_{\E} \|\phi_h\|^2_{0,\E} + |\phi_h|^2_{1,\E} + h^2_{\E}|\phi_h|^2_{2,\E}) \lesssim  h^{-4}_{\E} \|\phi_h\|^2_{0,\E} + h^{-2}_{\E} |\phi_h|^2_{1,\E} + |\phi_h|^2_{2,\E}.
	\end{split}
\end{equation*} 
For the term $\mscr{S}^{\boldsymbol{\partial},\nabla}(\phi_h,\phi_h)$, using the decomposition of $\nabla\phi_h$ into its normal and tangential components, followed by the application of inverse and trace inequalities, we obtain
\begin{equation*}
	\begin{split}
		\mscr{S}^{\boldsymbol{\partial},\nabla}(\phi_h,\phi_h)&=h^{-2}_{K}\sum_{i_2=1}^{2N_E} \boldsymbol{D}^{\boldsymbol{\partial}, \nabla}_{i_2}(\phi_h)^2 \lesssim \|\nabla \phi_h\|^2_{L^{\infty}(\partial \E)}  \\
		&\lesssim  \|\nabla \phi_h \cdot \boldsymbol{n}_{K} \|^2_{L^{\infty}(\partial \E)} + \|\nabla \phi_h \cdot \boldsymbol{t}_{K} \|^2_{L^{\infty}(\partial \E)}\\
		& \lesssim h^{-1}_{E} (\|\nabla \phi_h \cdot \boldsymbol{n}_{K} \|^2_{L^{2}(\partial \E)} + \|\nabla \phi_h \cdot \boldsymbol{t}_{K} \|^2_{L^{2}(\partial \E)}) \\
			& \lesssim  h^{-1}_{E}(h^{-1}_{K}\|\nabla \phi_h\|^2_{0,K}+ h_{K}|\nabla \phi_h|^2_{1,K} )  \lesssim  h^{-2}_{K}|\phi_h|^2_{1,K}+ |\phi_h|^2_{2,K}.
	\end{split}
\end{equation*}
Next, for the third term, we use Cauchy-Schwarz inequality, the fact $\|m_j\|_{0,E}^2 \lesssim h_{E}$ for all $m_j \in \mscr{M}_{k-2}(E)$, to derive
\begin{equation*}
\begin{split}	
\mscr{S}^{\boldsymbol{\partial},I}(\phi_h,\phi_h)
&=  h^{-2}_K\sum_{i_3=1}^{k_{E}N_E} \big( \boldsymbol{D}^{\boldsymbol{\partial},I}_{i_3}(\phi_h) \big)^2
\leq h^{-2}_K\sum_{i_3=1}^{k_{E}N_E}\|m_j\|_{0,E}^2	\|\partial_{\bn_K} \phi_h\|_{0,E}^2	\\
	&\lesssim	  h^{-2}_K h_E \|\partial_{\bn_K} \phi_h\|_{0,E}^2 
\lesssim	h_\E^{-2}|\phi_h|^2_{1,\E}	+|\phi_h|^2_{2,\E},
\end{split}	
\end{equation*}
where we have used scaled trace inequality.}

\blue{Following similar arguments, we can prove that
\begin{equation*}
	\mscr{S}^{\boldsymbol{\partial},II}(\phi_h,\phi_h) \lesssim	h_\E^{-2}|\phi_h|^2_{1,\E}	+|\phi_h|^2_{2,\E}.
	\end{equation*}}
\blue{Therefore, combining \eqref{split:I}, \eqref{eq:S^o}, and \eqref{eq:S^partial} with the estimates derived above yields the desired result.}
%
%
\end{proof}
	
{A careful inspection of the arguments in Lemma 3.6 of Ref.~\refcite{HuangYu2021}
	shows that the following inequality holds for all functions in the full
	virtual space $Z_h^k(\E)$. Since the proof follows from the same arguments
	as those employed in Ref.~\refcite{HuangYu2021}, we omit the details. We also note that stability analysis for related spaces and
	discretizations have been developed in the more general framework
	considered in Ref.~\refcite{MR4521659}, which includes the case $m=n=2$ considered
	here.}
\blue{\begin{proposition}\label{Lemma:SD:stab2}
		Let $K \in \mscr{T}_{h}$ and $\mscr{S}_{D}^{K}(\cdot,\cdot)$ be the bilinear form defined in~\eqref{stab:Delta}. Then, under assumptions~$\boldsymbol{\mathcal{A}1}$-$\boldsymbol{\mathcal{A}2}$, for all $\phi_h \in Z_h^k(\E)$, the following property holds:
$$\mscr{S}_{D}^{K}(\phi_h,\phi_h) \gtrsim 	|\phi_h|^2_{2,K}.$$
		\end{proposition}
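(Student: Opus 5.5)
The argument below proves the lower bound $\mscr{S}_{D}^{K}(\phi_h,\phi_h)\gtrsim|\phi_h|_{2,K}^2$ for every $\phi_h\in Z_h^k(K)$. Two remarks first. By the definition \eqref{stab:Delta}, $\mscr{S}_D^K(\phi_h,\phi_h)=\nu h_K^{-2}\|\boldsymbol{D}(\phi_h)\|_{\ell^2}^2$, and I treat $\nu$ as a fixed constant. Both sides are quadratic, so it suffices to bound $|\phi_h|_{2,K}$ by $h_K^{-1}\|\boldsymbol{D}(\phi_h)\|_{\ell^2}$.

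\textbf{Step 1: reduce to interior and boundary contributions.} I split $\phi_h$ through its biharmonic representation. Write $\phi_h=\phi_0+\phi_b$, where:
\begin{itemize}
\item $\phi_0\in H^2_0(K)$ solves $\Delta^2\phi_0=\Delta^2\phi_h=:f\in\mathbb{P}_{k-2}(K)$;
\item $\phi_b$ is biharmonic with the same Cauchy data $(\phi_h|_{\partial K},\nabla\phi_h|_{\partial K})$ as $\phi_h$.
\end{itemize}
Since $\phi_b$ minimizes $|\cdot|_{2,K}$ among $H^2$ extensions of its boundary data, we have $|\phi_b|_{2,K}\le|w|_{2,K}$ for any $w\in H^2(K)$ with the same trace and normal derivative. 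By the energy identity, $|\phi_0|_{2,K}^2=(f,\phi_0)_K$.

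\textbf{Step 2: the boundary part.} I build an explicit extension $w$ of the Cauchy data on a shape-regular sub-triangulation of $K$. Assumptions $\boldsymbol{\mathcal{A}1}$--$\boldsymbol{\mathcal{A}2}$ provide this sub-triangulation with a uniformly bounded number of triangles. On it I take $C^1$ macro-elements, for instance Hsieh--Clough--Tocher of sufficient degree, whose boundary nodal values are fixed by the data. On each edge the data are $\phi_h|_E\in\mathbb{P}_r(E)$ and $\partial_{\n}\phi_h|_E\in\mathbb{P}_t(E)$. These polynomials are determined exactly by the vertex values, the vertex gradients scaled by $h_{\v_i}\simeq h_K$, and the edge moments $\boldsymbol{D_3}$ and $\boldsymbol{D_4}$. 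A scaling argument on the reference configuration then gives
\begin{align*}
|w|_{2,K}^2\lesssim h_K^{-2}\|\boldsymbol{D}^{\partial}(\phi_h)\|^2 .
\end{align*}

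\textbf{Step 3: the interior part.} This is the main obstacle, because $f$ is not among the degrees of freedom. I use the identity $(f,\phi_0)=(f,\phi_h)-(f,\phi_b)$.
\begin{itemize}
\item The term $(f,\phi_h)$ only sees the moments of $\phi_h$ against $\mathbb{P}_{k-2}$. Moments against $\mathbb{P}_{k-4}$ are exactly the $\boldsymbol{D_5}$ data. Moments against degrees $k-3$ and $k-2$ are replaced, through the enhancement constraint defining $Z_h^k(K)$, by moments of $\Pi_K^{k,D}\phi_h$.
\item The polynomial $\Pi_K^{k,D}\phi_h$ is computed from the degrees of freedom (Lemma~\ref{Lemma 5.3.1.1}). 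Its $|\cdot|_{2,K}$, together with its lower-order norms normalized via \eqref{kernel:proj:PiD1}--\eqref{kernel:proj:PiD} and \eqref{Poincare:H2}, is bounded by $h_K^{-1}\|\boldsymbol{D}\|$ plus $|\phi_h|_{2,K}$. The $|\phi_h|_{2,K}$ contribution must later be absorbed.
\item The term $(f,\phi_b)$ is handled with $\|\phi_b\|_{0,K}$, estimated by $|\phi_b|_{2,K}$ and the boundary averages through \eqref{Poincare:H2}.
\item The factor $f$ is controlled by the inverse estimate $\|f\|_{0,K}\lesssim h_K^{-2}|\phi_0|_{2,K}$. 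This follows from $\|f\|^2=(\Delta^2\phi_0,f)=(D^2\phi_0,D^2(f b_K))$, using a bubble $b_K\in H^2_0$ built on the sub-triangulation together with polynomial inverse inequalities (Lemma~\ref{Lemma 5.3.1.3}).
\end{itemize}

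\textbf{Step 4: absorb and conclude.} The term that threatens circularity is the $\Pi_K^{k,D}$ contribution. Rather than bounding it crudely, I expect to handle it with a contradiction/compactness argument on the reference element, which is the route of Ref.~\refcite{HuangYu2021}. There I use two facts:
\begin{itemize}
\item $\boldsymbol{D}(\phi_h)=0$ forces $\phi_h=0$, since $\boldsymbol{D_1}$--$\boldsymbol{D_5}$ are unisolvent on $Z_h^k(K)$;
\item the spaces form a finite-dimensional family depending continuously on the element geometry, which is uniformly controlled by $\boldsymbol{\mathcal{A}1}$--$\boldsymbol{\mathcal{A}2}$.
\end{itemize}
Together these give a geometry-uniform norm equivalence $|\phi_h|_{2,K}^2\lesssim h_K^{-2}\|\boldsymbol{D}(\phi_h)\|^2$. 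Combining this with the explicit boundary estimate of Step 2 yields $|\phi_h|_{2,K}^2\lesssim\mscr{S}_D^K(\phi_h,\phi_h)$.
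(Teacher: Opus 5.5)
\textbf{The gap is in Step 4.} That step carries the whole proof, and it is asserted rather than proved. Unisolvence of $\boldsymbol{D_1}$--$\boldsymbol{D_5}$ on a fixed $K$ only gives a constant that depends on $K$. To make the constant uniform by compactness, you would need three things over the whole class of polygons allowed by $\boldsymbol{\mathcal{A}1}$--$\boldsymbol{\mathcal{A}2}$:
\begin{enumerate}
\item the rescaled spaces $Z_h^k(K)$, and hence the best constant in $|\phi_h|_{2,K}\lesssim h_K^{-1}\|\boldsymbol{D}(\phi_h)\|$, depend continuously on the polygon;
\item this class of polygons is closed;
\item unisolvence survives in its limit configurations, e.g.\ vertices whose angle tends to $\pi$.
\end{enumerate}
None of this is routine. $Z_h^k(K)$ is defined through fourth-order problems on $K$ with $\Delta^2\phi_h\in\mathbb{P}_{k-2}(K)$, whose solutions have angle-dependent corner behaviour. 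It is also defined through the enhancement constraint involving $\Pi_K^{k,D}$, which itself depends on $K$. Calling this the route of the cited reference does not supply the argument; the paper omits its own proof and simply defers to Huang--Yu. Note also that the norm equivalence you reach in Step 4 is already the claim, since $\mscr{S}_D^K(\phi_h,\phi_h)=\nu h_K^{-2}\|\boldsymbol{D}(\phi_h)\|^2$. So Steps 1--3 end up playing no role.

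\textbf{How to repair it.} The circularity that pushed you to compactness is not real. You bounded $|\Pi_K^{k,D}\phi_h|_{2,K}$ by $|\phi_h|_{2,K}$, but the integration-by-parts identity that makes $\Pi_K^{k,D}$ computable (Lemma~\ref{Lemma 5.3.1.1}) bounds it by the degrees of freedom alone. With $q=\Pi_K^{k,D}\phi_h$,
\begin{align*}
\nu|q|_{2,K}^2=\mscr{A}^K(\phi_h,q)=\nu(\Delta^2 q,\phi_h)_K-\nu\int_{\partial K}\mathrm{div}(D^2q)\cdot\boldsymbol{n}_K\,\phi_h+\nu\int_{\partial K}D^2q\,\boldsymbol{n}_K\cdot\nabla\phi_h .
\end{align*}
Here $\Delta^2q\in\mathbb{P}_{k-4}(K)$ only sees $\boldsymbol{D_5}$, while $\phi_h|_{\partial K}$ and $\nabla\phi_h|_{\partial K}$ are edgewise polynomials fixed by $\boldsymbol{D_1}$--$\boldsymbol{D_4}$, with $h_E\simeq h_K$ by $\boldsymbol{\mathcal{A}1}$. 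The argument then closes as follows.
\begin{enumerate}
\item Polynomial inverse and trace inequalities give $|q|_{2,K}\lesssim h_K^{-1}\|\boldsymbol{D}(\phi_h)\|$.
\item Then \eqref{kernel:proj:PiD1}--\eqref{kernel:proj:PiD} together with \eqref{Poincare:H2} give $\|q\|_{0,K}\lesssim h_K\|\boldsymbol{D}(\phi_h)\|$.
\item Splitting $f$ into its degree $\le k-4$ part (handled by $\boldsymbol{D_5}$) and its degree $k-3,k-2$ part (handled by the enhancement and $q$) yields $|(f,\phi_h)_K|\lesssim h_K\|f\|_{0,K}\|\boldsymbol{D}(\phi_h)\|$.
\item Your Step 2 and \eqref{Poincare:H2} give the same bound for $|(f,\phi_b)_K|$.
\item Your bubble estimate $\|f\|_{0,K}\lesssim h_K^{-2}|\phi_0|_{2,K}$ turns $|\phi_0|_{2,K}^2=(f,\phi_h)_K-(f,\phi_b)_K$ into $|\phi_0|_{2,K}\lesssim h_K^{-1}\|\boldsymbol{D}(\phi_h)\|$, with nothing to absorb.
\end{enumerate}
Adding Step 2 for $\phi_b$ then gives a direct proof whose constants are uniform in the geometry. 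No compactness is needed.
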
}

\blue{The next step is to use Propositions~\ref{Lemma:SD:stab1} and~\ref{Lemma:SD:stab2} to show that the stabilization bilinear forms satisfy the corresponding stability properties.}

\blue{\begin{theorem}\label{thrm:SD:stab}
The stabilizing form $\mscr{S}_{D}^{K}(\cdot,\cdot)$ defined in~\eqref{stab:Delta} satisfies property \eqref{5.3.2.1a}.
\end{theorem}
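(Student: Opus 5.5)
We need to show that for all $\phi_h \in Z_h^k(K)\cap{\rm Ker}(\Pi_K^{k,D})$,
$$\mscr{A}^K(\phi_h,\phi_h)\simeq \mscr{S}_D^K(\phi_h,\phi_h).$$
Since $\mscr{A}^K(\phi_h,\phi_h)=\nu|\phi_h|_{2,K}^2$, the lower bound $\mscr{S}_D^K(\phi_h,\phi_h)\gtrsim |\phi_h|_{2,K}^2$ is Proposition~\ref{Lemma:SD:stab2}, applied to the kernel functions. The constant $\nu$ is absorbed, because it multiplies $\mscr{S}_D^K$ as well as $\mscr{A}^K$. So the whole task is the upper bound.

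For the upper bound, Proposition~\ref{Lemma:SD:stab1} gives
$$\mscr{S}_D^K(\phi_h,\phi_h)\lesssim h_K^{-4}\|\phi_h\|_{0,K}^2+h_K^{-2}|\phi_h|_{1,K}^2+|\phi_h|_{2,K}^2.$$
It then remains to control the two lower-order terms by $|\phi_h|_{2,K}^2$, using that $\phi_h$ lies in the kernel of $\Pi_K^{k,D}$. If $\Pi_K^{k,D}\phi_h=0$, then \eqref{kernel:proj:PiD1} and \eqref{kernel:proj:PiD} give
$$\int_{\partial K}\phi_h=0 \quad\text{and}\quad \int_{\partial K}\nabla\phi_h=\mathbf{0}.$$

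Next I apply the Poincar\'e--Friedrichs inequality \eqref{Poincare:H2}. The boundary-average terms vanish, which yields $h_K^{-2}\|\phi_h\|_{0,K}\lesssim|\phi_h|_{2,K}$. For the gradient, I apply \eqref{Poincare2:H1} componentwise to $v=\partial_i\phi_h\in H^1(K)$. Its boundary mean is again zero, so $h_K^{-1}\|\partial_i\phi_h\|_{0,K}\lesssim|\partial_i\phi_h|_{1,K}\le|\phi_h|_{2,K}$, and hence $h_K^{-1}|\phi_h|_{1,K}\lesssim|\phi_h|_{2,K}$. Substituting both bounds gives $\mscr{S}_D^K(\phi_h,\phi_h)\lesssim|\phi_h|_{2,K}^2\simeq\mscr{A}^K(\phi_h,\phi_h)$.

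The only delicate point is that the constants in \eqref{Poincare2:H1} and \eqref{Poincare:H2} must be uniform in $K$. This follows from the star-shapedness assumption $\boldsymbol{\mathcal{A}2}$ via a scaling argument on uniformly Lipschitz domains. I would invoke this as standard rather than reprove it.
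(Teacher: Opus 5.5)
Your proof is correct and follows the paper's argument. The upper bound comes from Proposition~\ref{Lemma:SD:stab1} together with the Poincar\'e inequality \eqref{Poincare:H2} and the vanishing boundary means of kernel functions, and the lower bound is Proposition~\ref{Lemma:SD:stab2}. The only minor difference is in how you handle $h_K^{-2}|\phi_h|_{1,K}^2$: you bound it directly by applying \eqref{Poincare2:H1} to $\partial_i\phi_h$ and using \eqref{kernel:proj:PiD}, whereas the paper adds and subtracts $\Pi_K^1\phi_h$ and applies an inverse inequality to reduce this term to $|\phi_h|_{2,K}^2 + h_K^{-4}\|\phi_h\|_{0,K}^2$, so that only \eqref{Poincare:H2} is needed; both routes are valid.
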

\begin{proof}
To prove boundedness, we first add and subtract $\Pi_{K}^1\phi_h\in\mathbb{P}_1(K)$. Then, using the approximation and stability properties of the projector $\Pi_{K}^1$, together with the inverse inequality for polynomials, we obtain
\begin{equation*}
	\begin{split}
		|\phi_h|^2_{1,K} &\leq |\phi_h-\Pi_{K}^1 \phi_h |^2_{1,K}+ |\Pi_{K}^1 \phi_h|_{1,K}\\
		&\lesssim h^2_{K}|\phi_h |^2_{2,K}+ h^{-2}_{K}\|\Pi_{K}^1 \phi_h\|^2_{0,K} \lesssim h^2_{K}|\phi_h |^2_{2,K}+ h^{-2}_{K}\|\phi_h\|^2_{0,K}.
	\end{split}	
\end{equation*}		
Then, from  Proposition~\ref{Lemma:SD:stab1}, above bound and the Poincar\'e inequality~\eqref{Poincare:H2}, for all $\phi_h \in Z_h^{k}(K)  \cap {\rm Ker}(\Pi_{K}^{k,D})$, we have
	\begin{equation*}
	\begin{split}
\mscr{S}_{D}^{K}(\phi_h,\phi_h) &\lesssim h^{-4}_{K}\|\phi_h\|^2_{0,K} + |\phi_h|^2_{2,K} +h^{-2}_{K}|\phi_h|^2_{1,K}\\
&\lesssim h^{-4}_{K}\|\phi_h\|^2_{0,K} + |\phi_h|^2_{2,K}\\
	&\lesssim h^{-4}_{K}(h^{4}_{K}|\phi_h|^2_{2,K}) + |\phi_h|^2_{2,K} \lesssim |\phi_h|^2_{2,K}.
	\end{split}
\end{equation*}
The proof is complete by combining the above estimate and Proposition~\ref{Lemma:SD:stab2}. 
\end{proof}
}
	
	\blue{Now, we will prove the stability properties for the form $\mscr{S}_{\curl}^{K}(\cdot,\cdot)$ established in the following theorem.
	\begin{theorem}\label{thrm:SNabla:stab}
	The stabilizing form $\mscr{S}_{\curl}^{K}(\cdot,\cdot)$ defined in~\eqref{stab:nabla} satisfies property \eqref{5.3.2.1b}.
		\end{theorem}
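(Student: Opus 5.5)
We need to show that for every $\phi_h \in Z_h^k(K)\cap{\rm Ker}(\Pi_K^{k,\nabla^{\perp}})$ we have $\mscr{C}^K(\phi_h,\phi_h)\simeq \mscr{S}_{\curl}^K(\phi_h,\phi_h)$. Since $\mscr{C}^K(\phi_h,\phi_h)$ is weighted by $\sigma-\tfrac12\nabla\cdot\bbeta$, which lies between $\kappa$ and $\|\sigma\|_{\infty}+\tfrac12\|\nabla\cdot\bbeta\|_{\infty}$, it suffices (absorbing the physical factor into the choice of $\rho^K_{\bbeta,\sigma}$) to prove
\begin{align*}
|\phi_h|_{1,K}^2 \simeq \sum_{i=1}^{N_K^{\text{dof}}} \boldsymbol{D}_i(\phi_h)^2 = h_K^2\, \mscr{S}_D^K(\phi_h,\phi_h)/\nu
\end{align*}
on that kernel. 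The plan is therefore to reduce both directions to Propositions~\ref{Lemma:SD:stab1} and~\ref{Lemma:SD:stab2}, paying the missing factor $h_K^2$ with inverse and Poincar\'e-type inequalities.

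\emph{Upper bound.} By Proposition~\ref{Lemma:SD:stab1}, $h_K^2\mscr{S}_D^K(\phi_h,\phi_h)\lesssim h_K^{-2}\|\phi_h\|_{0,K}^2+|\phi_h|_{1,K}^2+h_K^2|\phi_h|_{2,K}^2$. For $\phi_h$ in the kernel, \eqref{revised2} gives $\int_{\partial K}\phi_h=0$, so \eqref{Poincare2:H1} yields $h_K^{-1}\|\phi_h\|_{0,K}\lesssim|\phi_h|_{1,K}$. It remains to control $h_K|\phi_h|_{2,K}$ by $|\phi_h|_{1,K}$. This is an inverse inequality for virtual functions, $|\phi_h|_{2,K}\lesssim h_K^{-1}|\phi_h|_{1,K}$ on $Z_h^k(K)$. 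It is the main obstacle, since the paper only states Lemma~\ref{Lemma 5.3.1.3} for polynomials. I would obtain it by combining Proposition~\ref{Lemma:SD:stab2} with a bound of the DOF vector by lower-order norms.

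\emph{Obtaining the inverse inequality.} First, $|\phi_h|_{2,K}^2\lesssim \mscr{S}_D^K(\phi_h,\phi_h)=\nu h_K^{-2}\sum_i\boldsymbol{D}_i(\phi_h)^2$. Next, bound the DOFs by $h_K^{-2}\|\phi_h\|_{0,K}^2+|\phi_h|_{1,K}^2$, using norm equivalence on the finite-dimensional space $Z_h^k(K)$ on a reference scaled element. This is made uniform by the shape-regularity assumptions $\boldsymbol{\mathcal{A}1}$--$\boldsymbol{\mathcal{A}2}$, as in the inverse-estimate arguments of Huang--Yu and Ref.~\refcite{MR4521659}.

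Concretely, the vertex values and interior and boundary moments of $\phi_h$ are controlled by $\|\phi_h\|_{L^\infty}$ and $L^2$ quantities. The gradient DOFs (in $\boldsymbol{D_2}$ and $\boldsymbol{D_4}$) are the delicate part, because $\nabla\phi_h$ on $\partial K$ is a polynomial on each edge. There I would use a one-dimensional polynomial inverse inequality on each edge together with the fact that the boundary traces determine $\phi_h$ through the biharmonic lifting. The lifting's $H^1$-stability, available by the cited references, gives $h_K^{-1}\|\nabla\phi_h\|^2_{0,\partial K}$-type control by $h_K^{-2}|\phi_h|_{1,K}^2$ up to $L^2$ terms.

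Granting this, we get $\sum_i\boldsymbol{D}_i(\phi_h)^2\lesssim h_K^{-2}\|\phi_h\|^2_{0,K}+|\phi_h|^2_{1,K}\lesssim|\phi_h|_{1,K}^2$ on the kernel, which is the upper bound.

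\emph{Lower bound.} On the kernel, Poincar\'e with \eqref{revised2} and the polynomial-projection argument in the proof of Theorem~\ref{thrm:SD:stab} give $|\phi_h|_{1,K}^2\lesssim h_K^2|\phi_h|_{2,K}^2+h_K^{-2}\|\phi_h\|_{0,K}^2$. To handle the $L^2$ term, I would use \eqref{Poincare:H2}, noting that $\int_{\partial K}\phi_h$ vanishes. The boundary mean of $\nabla\phi_h$ is computable from the vertex DOFs, since it reduces to tangential integrals and normal moments, so it is bounded by $h_K^{-1}(\sum_i\boldsymbol{D}_i^2)^{1/2}$ times a constant. This yields $h_K^{-2}\|\phi_h\|_{0,K}^2\lesssim h_K^2|\phi_h|_{2,K}^2+\sum_i\boldsymbol{D}_i(\phi_h)^2$. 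Finally, Proposition~\ref{Lemma:SD:stab2} gives $h_K^2|\phi_h|_{2,K}^2\lesssim\sum_i\boldsymbol{D}_i(\phi_h)^2$, so $|\phi_h|^2_{1,K}\lesssim\sum_i\boldsymbol{D}_i(\phi_h)^2$. Multiplying by $\rho^K_{\bbeta,\sigma}$ and using the coefficient bounds completes \eqref{5.3.2.1b}.

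The step I am least sure of is the DOF upper bound by $H^1$-level quantities in the upper-bound direction, i.e.\ the virtual inverse estimate.
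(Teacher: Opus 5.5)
Your proof is correct. The upper bound follows the paper's route: Proposition~\ref{Lemma:SD:stab1}, then \eqref{Poincare2:H1} with \eqref{revised2}, then the virtual inverse estimate $|\phi_h|_{2,K}\lesssim h_K^{-1}|\phi_h|_{1,K}$. The paper does not prove that estimate; it quotes it from \cite[Lemma~4.3]{MR4521659}, and you should do the same. Your sketch of it does not work as written. Stability of the biharmonic lifting bounds interior norms by boundary data, which is the opposite of what you need to control the normal-derivative DOFs in $\boldsymbol{D_2}$, $\boldsymbol{D_4}$ by $|\phi_h|_{1,K}$.

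The lower bound takes a genuinely different route. The paper uses the other kernel condition \eqref{revised1}. Testing with $q=x_1$ and $q=x_2$ gives $\int_K\nabla\phi_h=\boldsymbol{0}$, so \eqref{Poincare1:H1} applied to $\partial_1\phi_h$ and $\partial_2\phi_h$ yields $|\phi_h|_{1,K}\lesssim h_K|\phi_h|_{2,K}$. Proposition~\ref{Lemma:SD:stab2} then finishes in one line.

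You use only \eqref{revised2}, together with the $\mathbb{P}_1$-projection bound, \eqref{Poincare:H2}, and the fact that $\int_{\partial K}\nabla\phi_h$ is a fixed combination of DOFs:
\begin{itemize}
\item the tangential part comes from differences of $\boldsymbol{D_1}$ values;
\item the normal part comes from the $m=1$ moment in $\boldsymbol{D_4}$ if $k\ge 3$;
\item if $k=2$ it comes from the $\boldsymbol{D_2}$ values, since then $\partial_{\boldsymbol{n}}\phi_h|_E\in\mathbb{P}_1(E)$.
\end{itemize}
So ``vertex DOFs'' is slightly inaccurate for $k\ge 3$.

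Your argument is correct but longer, and it depends on the specific DOF set. What it buys is generality. Since $h_K^{-1}\bigl|\int_{\partial K}\phi_h\bigr|$ is likewise bounded by $\boldsymbol{D_1}$--$\boldsymbol{D_3}$, your argument gives $|\phi_h|_{1,K}^2\lesssim\sum_i\boldsymbol{D}_i(\phi_h)^2$ on all of $Z_h^k(K)$, not just on the kernel. The paper's shorter argument instead relies on exactly the orthogonality \eqref{revised1} built into $\Pi_K^{k,\nabla^{\perp}}$.
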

	\begin{proof}
We will first prove the inequality $\mscr{S}_{\curl}^{K}(\phi_h,\phi_h) \lesssim |\phi_h|^2_{1,K}$. Indeed, for all $\phi_h \in Z^k_h(K)$, from Proposition~\ref{Lemma:SD:stab1} and inverse inequality for virtual functions (cf.~\cite[Lemma 4.3]{MR4521659}), we have that 	
\begin{equation}\label{S1}
	\begin{split}
		\mscr{S}_{\curl}^{K}(\phi_h,\phi_h) &\simeq h^{2}_{K}\mscr{S}_{D}^{K}(\phi_h,\phi_h)  \\
			&\lesssim h^{2}_{K}(h^{-4}_{K}\|\phi_h\|^2_{0,K} + |\phi_h|^2_{2,K} +h^{-2}_{K}|\phi_h|^2_{1,K})\\
			&\lesssim h^{-2}_{K}\|\phi_h\|^2_{0,K} + h^{2}_{K}(h^{-2}_{K}|\phi_h|^2_{1,K}) + |\phi_h|^2_{1,K}  \lesssim h^{-2}_{K}\|\phi_h\|^2_{0,K}+|\phi_h |^2_{1,K}.
	\end{split}
\end{equation}
On the other hand, if  $\phi_h \in Z_h^{k}(K)  \cap {\rm Ker}(\Pi_{K}^{k,\nabla^{\perp}})$, then by using the Poincaré inequality~\eqref{Poincare2:H1} and the second condition in the definition of projector $\Pi_{K}^{k,\nabla^{\perp}} $ (cf.~\eqref{revised2}), we have 
$\Pi^0_{\partial K}\phi_h=\Pi^0_{\partial K} (\Pi_{K}^{k,\nabla^{\perp}} \phi_h) = 0$, then 
\begin{equation}\label{S2}
h^{-2}_{K} \|\phi_h\|^2_{0,K} \lesssim  |\phi_h|^2_{1,K}.	
\end{equation}
Thus, from bounds~\eqref{S1} and~\eqref{S2}, we have 
\begin{equation*}
	\mscr{S}_{\curl}^{K}(\phi_h,\phi_h) \lesssim |\phi_h|^2_{1,K}.  
\end{equation*}
Next, we prove the converse inequality. Theorem~\ref{thrm:SD:stab} directly implies that 
\begin{equation}\label{Snabla1}
	\begin{split}
		\mscr{S}_{\curl}^{K}(\phi_h,\phi_h) &\simeq h^{2}_{K}\mscr{S}_{D}^{K}(\phi_h,\phi_h) \gtrsim h^{2}_{K} |\phi_h|^2_{2,K} \quad \forall \phi_h \in Z^k_h(K).
	\end{split}
\end{equation}
Furthermore, for any $\phi_h \in Z_h^{k}(K) \cap {\rm Ker}(\Pi_{K}^{k,\nabla^{\perp}})$, the first condition in the definition of the projection operator $\Pi_{K}^{k,\nabla^{\perp}}$ (cf.~\eqref{revised1}), yields
\begin{equation*}
\int_{\E} \partial_x \phi_h = 0 \qquad \text{and} \quad  \int_{\E} \partial_y \phi_h = 0.
\end{equation*}
Consequently, applying the Poincaré inequality~\eqref{Poincare1:H1} componentwise to $\nabla\phi_h$, we obtain
\begin{equation}\label{Snabla2}
	h^{-2}_{K} |\phi_h|^2_{1,K} \lesssim  |\phi_h|^2_{2,K}.    
\end{equation} 
Finally, combining estimates~\eqref{Snabla1} and~\eqref{Snabla2} leads to
\begin{equation*}
	\mscr{S}_{\curl}^{K}(\phi_h,\phi_h) \gtrsim |\phi_h|^2_{1,K},  
\end{equation*} 
which completes the proof.
\end{proof}}

	For all $\psi_h,\phi_h \in Z^k_h$,we define the following global discrete bilinear forms:
	\begin{subequations}\label{5.3.2.3}
		\begin{align}
			\label{5.3.2.3a}\mscr{A}^{h}(\psi_h,\phi_h) &:= \sum_{K \in \mscr{T}_{h}} \mscr{A}^{h,K}(\psi_h,\phi_h), &&
			\mscr{B}^{h}(\psi_h,\phi_h) := \sum_{K \in \mscr{T}_{h}} {\mscr{B}}^{h,K}(\psi_h,\phi_h), &&
			\mscr{C}^{h}(\psi_h,\phi_h) := \sum_{K \in \mscr{T}_{h}} \mscr{C}^{h,K}(\psi_h,\phi_h),
		\end{align}
	\end{subequations}
where $\mscr{B}^{h,K}(\psi_h,\phi_h) := \frac{1}{2} \big[\widetilde{\mscr{B}}^{h,K}(\psi_h,\phi_h)-\widetilde{\mscr{B}}^{h,K}(\phi_h,\psi_h)\big]$. The bilinear form $\mscr{B}^h(\cdot, \cdot)$ can be naturally extended to the continuous space $Z$. 

{The following result establishes the typical consistency, stability, and
	boundedness properties of the discrete local forms
	$\mscr{A}^{h,K}(\cdot,\cdot)$ and $\mscr{C}^{h,K}(\cdot,\cdot)$ (the latter being
	non-consistent for variable coefficients). Both properties follow from
	standard VEM arguments. In particular, we refer to~\cite[Proposition~3.5]{MORABRINKMAN} for a
	proof of the first bound. The second estimate is obtained by analogous
	techniques, replacing the tensor $\mathbb{K}^{-1}$ in Ref.~\refcite{MORABRINKMAN} with
	the quantity $\sigma - \frac{1}{2}\nabla \cdot \boldsymbol{\beta}$.
}
	\begin{theorem}\label{Theorem 5.3.2.1}
		For all $K \in \mscr{T}_{h}$, the local bilinear forms $\mscr{A}^{h,K}(\cdot,\cdot)$ and $\mscr{C}^{h,K}(\cdot,\cdot)$ defined in \eqref{5.3.2.2a} and \eqref{5.3.2.2c}, respectively, satisfy the following:
		\begin{enumerate}
			\item \textbf{Consistency:} For all elements $K \in \mscr{T}_{h}$ and $h>0$, we have
			\begin{align*}
				\mscr{A}^{h,K}(q,\phi_h) = \mscr{A}^{K} (q,\phi_h) \qquad \forall q \in \mathbb{P}_{k}(K) \quad \text{and} \quad \forall \phi_h \in Z_{h}^{k}(K).
			\end{align*}	
			\item \textbf{Stability:} For all elements $K \in \mscr{T}_{h}$ and $h>0$, we have
				\begin{align*}	
 \mscr{A}^{K}(\phi_h,\phi_h) \simeq \ &\mscr{A}^{h,K}(\phi_h,\phi_h)  \quad \forall \phi_h \in Z_{h}^{k}(K), &&
\mscr{C}^{K}(\phi_h,\phi_h) \simeq \ \mscr{C}^{h,K}(\phi_h,\phi_h)  \quad \forall \phi_h \in Z_{h}^{k}(K),
				\end{align*}
			where the hidden constants are independent of $h_K$.
		\end{enumerate}
	\end{theorem}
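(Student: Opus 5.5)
Consistency is immediate from the definition of $\Pi_{K}^{k,D}$. For $q \in \mathbb{P}_k(K)$ we have $\Pi_{K}^{k,D} q = q$. The polynomial $q$ satisfies \eqref{proj:PiD}--\eqref{kernel:proj:PiD} trivially, and the solution of that local problem is unique, because \eqref{kernel:proj:PiD1}--\eqref{kernel:proj:PiD} fix the $\mathbb{P}_1$ kernel of $\mscr{A}^K$ on $\mathbb{P}_k(K)$. Hence the stabilization term in \eqref{5.3.2.2a} vanishes, since $q-\Pi_{K}^{k,D}q=0$. We are left with $\mscr{A}^{h,K}(q,\phi_h)=\mscr{A}^K(q,\Pi_{K}^{k,D}\phi_h)$. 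By symmetry of $\mscr{A}^K$ and \eqref{proj:PiD} tested with $q$, this equals $\mscr{A}^K(q,\phi_h)$.

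For stability of $\mscr{A}^{h,K}$, write $\phi_h=\Pi_{K}^{k,D}\phi_h+(\phi_h-\Pi_{K}^{k,D}\phi_h)$. By \eqref{proj:PiD} this splitting is $\mscr{A}^K$-orthogonal, so $\mscr{A}^K(\phi_h,\phi_h)=\mscr{A}^K(\Pi_{K}^{k,D}\phi_h,\Pi_{K}^{k,D}\phi_h)+\mscr{A}^K(\phi_h-\Pi_{K}^{k,D}\phi_h,\phi_h-\Pi_{K}^{k,D}\phi_h)$. The function $\phi_h-\Pi_{K}^{k,D}\phi_h$ lies in $Z_h^k(K)\cap{\rm Ker}(\Pi_{K}^{k,D})$, because the projector is idempotent. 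Theorem~\ref{thrm:SD:stab} (property \eqref{5.3.2.1a}) therefore lets us replace the second term by $\mscr{S}_D^K$ up to constants independent of $h_K$. This gives $\mscr{A}^{h,K}(\phi_h,\phi_h)\simeq\mscr{A}^K(\phi_h,\phi_h)$ with constants $\min(1,c_*)$ and $\max(1,c^*)$.

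For $\mscr{C}^{h,K}$ the weight $\gamma:=\sigma-\tfrac12\nabla\cdot\bbeta$ is variable, so I work with $|\phi_h|_{1,K}^2$ as the intermediate quantity. In the case $\kappa>0$ one has $\kappa|\phi_h|_{1,K}^2\le \mscr{C}^K(\phi_h,\phi_h)\le \|\gamma\|_\infty|\phi_h|_{1,K}^2$, so it suffices to show $\mscr{C}^{h,K}(\phi_h,\phi_h)\simeq|\phi_h|_{1,K}^2$.

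The upper bound comes in three steps.
\begin{itemize}
\item The polynomial term is at most $\|\gamma\|_\infty\|\boldsymbol{\Pi}_K^{k-1}\curl\phi_h\|_{0,K}^2\le\|\gamma\|_\infty|\phi_h|_{1,K}^2$, by $L^2$-stability of the projection.
\item The stabilization term is bounded via Theorem~\ref{thrm:SNabla:stab} applied to $\phi_h-\Pi_{K}^{k,\nabla^\perp}\phi_h\in{\rm Ker}(\Pi_{K}^{k,\nabla^\perp})$. This gives the bound $|\phi_h-\Pi_{K}^{k,\nabla^\perp}\phi_h|_{1,K}^2$.
\item Finally, $|\phi_h-\Pi_{K}^{k,\nabla^\perp}\phi_h|_{1,K}\le|\phi_h|_{1,K}$, because \eqref{revised1} makes $\curl\Pi_{K}^{k,\nabla^\perp}\phi_h$ the $L^2$-orthogonal projection of $\curl\phi_h$ onto $\curl\mathbb{P}_k(K)$.
\end{itemize}

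The lower bound is the main obstacle. $\boldsymbol{\Pi}_K^{k-1}$ does not control the part of $\curl\phi_h$ outside polynomials, and $\gamma\ge\kappa$ gives only $\mscr{C}^{h,K}\ge\kappa\|\boldsymbol{\Pi}_K^{k-1}\curl\phi_h\|^2+c\,|\phi_h-\Pi_{K}^{k,\nabla^\perp}\phi_h|_{1,K}^2$. I would use $\curl\mathbb{P}_k(K)\subseteq[\mathbb{P}_{k-1}(K)]^2$, so that $\|\boldsymbol{\Pi}_K^{k-1}\curl\phi_h\|_{0,K}\ge\|\curl\Pi_{K}^{k,\nabla^\perp}\phi_h\|_{0,K}$, since projecting onto a larger space cannot decrease the norm. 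Pythagoras then gives
\[
|\phi_h|_{1,K}^2=|\Pi_{K}^{k,\nabla^\perp}\phi_h|_{1,K}^2+|\phi_h-\Pi_{K}^{k,\nabla^\perp}\phi_h|_{1,K}^2\lesssim\mscr{C}^{h,K}(\phi_h,\phi_h),
\]
with constants depending on $\kappa$, $\|\gamma\|_\infty$ and $\rho^K_{\bbeta,\sigma}$. When $\kappa=0$ the equivalence is understood with $\mscr{C}^K$ replaced by its $\gamma$-weighted analogue, following the argument of \cite[Proposition~3.5]{MORABRINKMAN} with $\mathbb{K}^{-1}$ replaced by $\gamma$. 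I expect this degenerate weight to be the delicate point in making the constants uniform.
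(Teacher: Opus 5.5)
Your proof is correct. It is the standard VEM argument the paper defers to, namely \cite[Proposition~3.5]{MORABRINKMAN} with $\mathbb{K}^{-1}$ replaced by $\sigma-\frac{1}{2}\nabla\cdot\bbeta$; your use of $\curl\,\mathbb{P}_k(K)\subseteq[\mathbb{P}_{k-1}(K)]^2$ for the lower bound of the projected term is exactly the right step. Your closing remark on $\kappa=0$ proves nothing, but no argument could: if $\sigma-\frac{1}{2}\nabla\cdot\bbeta\equiv 0$ on $K$ (e.g.\ the Stokes case), then $\mscr{C}^K(\phi_h,\phi_h)=0$ while the stabilization part of $\mscr{C}^{h,K}(\phi_h,\phi_h)$ is positive for every $\phi_h\notin\mathbb{P}_k(K)$ when $\rho^K_{\bbeta,\sigma}>0$, so the $\mscr{C}$-equivalence genuinely needs $\kappa>0$, which the cited argument also uses implicitly.
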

	\indent The proof of this theorem follows by standard arguments\cite{VEMCN, VEM1, VEMGSOC}. The next step is to construct a computable approximation of the linear functional for the right hand side terms.
	For this purpose, we define the following computable terms:
		\begin{align*}
            \mscr{L}_{\f,\u}^{h,K}(\phi_h) &:= \Big( \boldsymbol{\Pi}_{K}^{k-1} (\f+\uh),\boldsymbol{\Pi}_{K}^{k-1} \curl(\phi_h)\Big)_{K}, \\
            \mscr{L}_{\psi_d,\psi}^{h,K}(\tau_h) &:= \Big( \boldsymbol{\Pi}_{K}^{k-1} (\curl(\psi_h)-\curl(\psi_d)), \boldsymbol{\Pi}_{K}^{k-1} \curl(\tau_h)\Big)_{K}.
		\end{align*}
	\indent We consider the following approximations of the two linear functionals:
		\begin{align}
		\label{5.3.2.6} \mscr{L}_{\f,\u}^{h} (\phi_h) = \sum_{K \in \mscr{T}_{h}} \mscr{L}_{\f,\u}^{h,K} (\phi_h),\
		 \mscr{L}_{\psi_d,\psi}^{h}(\tau_h) = \sum_{K \in \mscr{T}_{h}} \mscr{L}_{\psi_d,\psi}^{h,K}(\tau_h), \forall \phi_h, \tau_h \in Z_h^k.
		\end{align}
	\indent The following lemma outlines key properties needed for the discrete bilinear forms introduced in \eqref{5.3.2.3} and \eqref{5.3.2.6}.
	\begin{lemma}\label{Lemma 5.3.2.2}
		There are positive constants $C_{\mscr{A}}, C_{\mscr{B}}, C_2, C_{\mscr{L}_{\f,\u}}$ and $C_{\mscr{L}_{\psi_d}}$ independent of $h$, such that the following conditions are satisfied for all $\psi_h, \phi_h$ in $Z_h^{k}$:
		\begin{subequations}\label{5.3.2.7n}
			\begin{align}
				\label{5.3.2.7anew}	\big|\mscr{A}^h(\psi_h,\phi_h)\big| &\le C_{\mscr{A}} \|\psi_h\|_{Z} \|\phi_h\|_{Z}, && \big|\mscr{C}^h(\psi_h,\phi_h)\big| \le C_{2} \|\psi_h\|_{Z} \|\phi_h\|_{Z}, \\
				\label{5.3.2.7c}	\mscr{A}^h(\phi_h,\phi_h) &\ge \alpha_0 \|\phi_h\|_{Z}^{2},\\ 
					\label{5.3.2.7cc}  \mscr{C}^h(\phi_h,\phi_h) &\ge \alpha_2 \kappa \|\phi_h\|_{1}^{2},\\
				\label{5.3.2.7d} \big|\mscr{L}_{\f,\u}^h(\phi_h)\big| &\le C_{\mscr{L}_{\f,\u}} (\|\f\|_{0} +\|\uh\|_{0}) \|\phi_h\|_{Z}, &&
				\big|\mscr{L}_{\psi_d,\psi}^{h}(\tau_h)\big| \le C_{\mscr{L}_{\psi_d,\psi}}( \|\mathrm{\bf curl}(\psi_h)\|_{0} + \|\mathrm{\bf curl}(\psi_d)\|_{0}) \|\tau_h\|_{Z},\\
				\label{5.3.2.7b}	\big|\mscr{B}^h(\psi_h,\phi_h)\big| &\le C_{\mscr{B}} \|\psi_h\|_{Z} \|\phi_h\|_{Z},\\
					\label{5.3.2.7g} \mscr{O}_{s}^h(\phi_h,\phi_h) &\ge \alpha_0 \|\phi_h\|_{Z}^{2}.
			\end{align}
		\end{subequations}
	\end{lemma}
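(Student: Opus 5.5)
The plan is to prove the estimates element by element and then sum, using Cauchy--Schwarz for sums together with Theorem~\ref{Theorem 5.3.2.1}, the stability bound \eqref{3.5.1.5}, and the Poincar\'e-type inequality $|\phi|_{1}\lesssim\|\phi\|_{Z}$ on $Z$. The latter holds because functions in $Z$ have vanishing trace and normal derivative, so $\|\phi\|_{2}\simeq\|D^2\phi\|_0$.

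\textbf{Bilinear forms $\mscr{A}^h$ and $\mscr{C}^h$.} For \eqref{5.3.2.7anew}, Theorem~\ref{Theorem 5.3.2.1} gives
\[
\mscr{A}^{h,K}(\phi_h,\phi_h)\simeq \nu|\phi_h|_{2,K}^2 .
\]
Since $\mscr{A}^{h,K}$ is symmetric and positive semidefinite, Cauchy--Schwarz yields
\[
|\mscr{A}^{h,K}(\psi_h,\phi_h)|\le \mscr{A}^{h,K}(\psi_h,\psi_h)^{1/2}\mscr{A}^{h,K}(\phi_h,\phi_h)^{1/2}.
\]
Summing over $K$ gives the continuity bound, and the lower bound in the same equivalence gives \eqref{5.3.2.7c}. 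The same argument for $\mscr{C}^{h,K}$, whose diagonal is equivalent to $\mscr{C}^K(\phi_h,\phi_h)$, gives
\[
\mscr{C}^{h,K}(\phi_h,\phi_h)\lesssim \|\sigma-\tfrac12\nabla\cdot\bbeta\|_{\infty}|\phi_h|_{1,K}^2 .
\]
Continuity then follows from $|\phi_h|_1\lesssim\|\phi_h\|_Z$.

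For \eqref{5.3.2.7cc}, the lower equivalence gives $\mscr{C}^{h,K}(\phi_h,\phi_h)\gtrsim \mscr{C}^K(\phi_h,\phi_h)\ge\kappa|\phi_h|_{1,K}^2$. Here I would check that the constant in Theorem~\ref{Theorem 5.3.2.1} can be taken to carry the factor $\kappa$, using $\sigma-\tfrac12\nabla\cdot\bbeta\ge\kappa$ on the projected part and the stabilization on the remainder. Finally, $|\cdot|_1\simeq\|\cdot\|_1$ on $H^1_0$ by Poincar\'e.

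\textbf{Linear functionals.} For \eqref{5.3.2.7d}, use $L^2$-stability of $\boldsymbol{\Pi}_K^{k-1}$:
\[
|\mscr{L}^{h,K}_{\f,\u}(\phi_h)|\le \|\f+\uh\|_{0,K}\,\|\curl\phi_h\|_{0,K}.
\]
Sum, apply Cauchy--Schwarz, and use $\|\curl\phi_h\|_0=|\phi_h|_1\lesssim\|\phi_h\|_Z$. The co-state functional is treated identically.

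\textbf{Convective form $\mscr{B}^h$.} This is the part needing care, so I would bound $\widetilde{\mscr{B}}^{h,K}$ directly. By $L^2$-stability of $\boldsymbol{\Pi}^{k-2}_{K,\text{mat}}$,
\[
\|\boldsymbol{\Pi}^{k-2}_{K,\text{mat}}\nabla\curl\psi_h\|_{0,K}\le|\psi_h|_{2,K},
\]
and therefore
\[
|\widetilde{\mscr{B}}^{h,K}(\psi_h,\phi_h)|\le\|\bbeta\|_{\infty}|\psi_h|_{2,K}\,\|\curl\phi_h\|_{0,K}.
\]
Summing gives
\[
|\widetilde{\mscr{B}}^h(\psi_h,\phi_h)|\lesssim\|\bbeta\|_\infty\|\psi_h\|_Z|\phi_h|_1\lesssim\|\psi_h\|_Z\|\phi_h\|_Z .
\]
Swapping the roles of $\psi_h$ and $\phi_h$ bounds the other term, which gives \eqref{5.3.2.7b}. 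Only $L^2$ stability is needed here, so \eqref{3.5.1.5} is not required.

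\textbf{Coercivity of $\mscr{O}^h_s$.} For \eqref{5.3.2.7g}, $\mscr{B}^h$ is skew-symmetric by construction, so $\mscr{B}^h(\phi_h,\phi_h)=0$. Also $\mscr{C}^h(\phi_h,\phi_h)\ge0$ by \eqref{5.3.2.7cc}, since $\kappa\ge0$. Hence
\[
\mscr{O}^h_s(\phi_h,\phi_h)=\mscr{A}^h(\phi_h,\phi_h)+\mscr{C}^h(\phi_h,\phi_h)\ge\alpha_0\|\phi_h\|_Z^2 .
\]

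The main obstacle is \eqref{5.3.2.7cc} when $\sigma-\frac12\nabla\cdot\bbeta$ is variable. Theorem~\ref{Theorem 5.3.2.1} hides constants that depend on the coefficient. I would resolve this by choosing $\rho^K_{\bbeta,\sigma}\gtrsim\kappa$ and using $\Pi^{k-1}_K\curl\phi_h$ orthogonality to split $\|\curl\phi_h\|_{0,K}^2$ into its projected part and its remainder, controlling the remainder by the stabilization via Theorem~\ref{thrm:SNabla:stab}.
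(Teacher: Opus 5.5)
Your proposal is correct and follows essentially the same route as the paper: elementwise Cauchy--Schwarz with the stability of Theorem~\ref{Theorem 5.3.2.1} for $\mscr{A}^h$ and $\mscr{C}^h$; $L^2$-stability of the projections together with $|\phi_h|_1\lesssim\|\phi_h\|_Z$ for $\mscr{B}^h$ and the load functionals; and skew-symmetry of $\mscr{B}^h$ plus nonnegativity of $\mscr{C}^h$ for \eqref{5.3.2.7g}. Your splitting argument for \eqref{5.3.2.7cc} is valid and goes beyond the paper, which only invokes ``standard arguments'': it uses orthogonality of $\boldsymbol{\Pi}^{k-1}_K$, the fact that $\curl(\Pi^{k,\nabla^{\perp}}_K\phi_h)\in[\mathbb{P}_{k-1}(K)]^2$, the lower bound of Theorem~\ref{thrm:SNabla:stab}, and the choice $\rho^K_{\bbeta,\sigma}\gtrsim\kappa$, and in doing so makes the $\kappa$-dependence of $\alpha_2$ explicit.
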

	\begin{proof}
		The proof of (\ref{5.3.2.7c}-\ref{5.3.2.7b}) follow by arguments present in Refs.~\refcite{VEMCN}, \refcite{VEM1}, \refcite{VEMGSOC}. To prove \eqref{5.3.2.7b}, we use the Cauchy-Schwarz inequality and the estimate
		\begin{align*}
			|\phi_h|_{1} \le C_p \|\phi_h\|_{Z},
		\end{align*}
	where $C_p >0$ is a constant independent of $h$ and $K$. The proof of \eqref{5.3.2.7g} follows from \eqref{5.3.2.7c}, \eqref{5.3.2.7cc}
	and the fact that $\mscr{B}^h(\phi_h,\phi_h)=0$.
	\end{proof} 
	\subsection{VEM  Formulation}\label{VEM Discrete Formulation}
	We now present the discrete formulation for the continuous optimality system \eqref{5.2.1.7a}-\eqref{5.2.1.7c} using the discrete forms developed in Section~\ref{Virtual Element Methods}. The discrete first order optimality system is derived using the approach discussed in Ref.~\cite[Section~3.2]{CVEM1}. The VEM discretization of the continuous optimality system reads as follows: find $(\psi_h, \vartheta_h, \uh) \in Z_h^k \times Z_h^k \times \mscr{U}_{\boldsymbol{ad}}^{\boldsymbol{h}}$ such that
	\begin{subequations}\label{5.3.3.1}
		\begin{align}
			\label{5.3.3.1a} \mscr{O}_{s}^h(\psi_h,\phi_h) &:= \mscr{A}^h(\psi_h,\phi_h) + \mscr{B}^h(\psi_h,\phi_h) + \mscr{C}^h(\psi_h,\phi_h) = \mscr{L}_{\f,\u}^{h}(\phi_h),
			\\
			\label{5.3.3.1b} \mscr{O}_{a}^{h}(\tau_h,\vartheta_h)&:= \mscr{A}^h(\vartheta_h,\tau_h) - \mscr{B}^h(\vartheta_h,\tau_h) + \mscr{C}^h(\vartheta_h,\tau_h) \ \ = \mscr{L}_{\psi_d,\psi}^{h}(\tau_h),
			\\
			\label{5.3.3.1c} & \ \ \ \sum_{K \in \mscr{T}_{h}} \big(\boldsymbol{\Pi}_{K}^{k-1} \curl(\vartheta_h) + \lambda \uh,\tilde{\u}_{\boldsymbol{h}}-\uh\big)_{K} \ge \boldsymbol{0},
		\end{align}
	\end{subequations}
	{for all $(\phi_h, \tau_h,\tilde{\u}_{\boldsymbol{h}}) \in Z_h^{k} \times Z_h^{k} \times \mscr{U}_{\boldsymbol{ad}}^{\boldsymbol{h}}$}. The \textit{variational inequality} \eqref{5.3.3.1c} is equivalent to the following discrete projection formula:
	\begin{align}\label{dpf}
		\uh|_{K} := \mscr{P}_{[\boldsymbol{u_a},\boldsymbol{u_b}]} \left(-\frac{1}{\lambda}\boldsymbol{\Pi}_{K}^{{{k-1}}} \big(\curl(\vartheta_h)|_{K}\big)\right) \qquad \forall K \in \mscr{T}_h.
	\end{align}
	By a consequence of Lemma~\ref{Lemma 5.3.2.2} and Lax-Milgram theorem, we conclude the well-posedness of the discrete system \eqref{5.3.3.1} for a fixed control $\uh \in \mscr{U}_{\boldsymbol{ad}}^{\boldsymbol{h}}$.
	\section{Error Analysis} \label{Error Analysis}\setcounter{equation}{0}
	This section is dedicated to the development of a priori error analysis of the VEM scheme outlined in Sec.~\ref{Virtual Element Methods} for the optimal control problem.
	
	We define the following well defined broken Sobolev spaces $H^{\delta}(\Omega,\mscr{T}_{h})$ and broken $H^{\delta}$-seminorm for  $\delta >0$ as:
		\begin{align*}
			H^{\delta}(\Omega,\mscr{T}_{h}) &:= \big\{{\v \in L^{2}(\Omega): v|_{K} \in H^{\delta}(K)} \ \forall K \in \mscr{T}_{h} \big\},&&
			|\phi|_{\delta,h}^{2} := \sum_{K \in \mscr{T}_{h}} |\phi|_{\delta,K}^{2} \qquad \forall \phi \in H^{\delta}(\Omega,\mscr{T}_{h}).
	\end{align*}
An analogous construction holds for the vector- and matrix-valued spaces.
\subsection{Preliminary Results}
This subsection is devoted to a set of preliminary results that are essential for the forthcoming error analysis. We summarize approximation properties of polynomial and virtual element spaces, together with the corresponding projection operators, and we derive some useful identities that will be employed throughout the analysis.

We start by considering the following polynomial approximation result (cf. Ref.~\refcite{BSC}):
	\begin{lemma}\label{Lemma 5.4.2.2.2}
    If Assumption $\boldsymbol{\mathcal{A}2}$ is satisfied, then for each $v \in H^{\delta}(K)$, there exists $v_{\pi} \in \mathbb{P}_{m} (K), m \geq 0$ such that 
\begin{equation*}
\vert v - v_{\pi} \vert_{\ell,K} \lesssim h_{K}^{\delta - \ell}|v|_{\delta,K} \quad 0\leq \delta \leq m+1, \quad \ell=0,\ldots, [\delta],
\end{equation*}
where $[\delta]$ is the largest integer less than or equal to $\delta.$
	\end{lemma}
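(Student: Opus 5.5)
We will prove Lemma~\ref{Lemma 5.4.2.2.2} with the averaged Taylor polynomial (Bramble--Hilbert/Dupont--Scott framework). Under $\boldsymbol{\mathcal{A}2}$, $K$ is star-shaped with respect to every point of a ball $B$ of radius $C_{\mscr{T}_h}h_K$. Consequently, its chunkiness parameter $h_K/\rho_B$ is bounded by $C_{\mscr{T}_h}^{-1}$, uniformly in $K$ and $h$. We first treat integer $\delta$ with $1\le\delta\le m+1$. We take $v_\pi := Q^{\delta} v$, the Taylor polynomial of degree $\delta-1\le m$ averaged over $B$ with a smooth cutoff. By construction $v_\pi\in\mathbb{P}_m(K)$.

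\textbf{Scaling to a reference size.} We map $K$ to $\hat K := h_K^{-1}(K-\x_K)$, which has unit diameter and is star-shaped with respect to a ball of radius $C_{\mscr{T}_h}$. On $\hat K$ the Bramble--Hilbert lemma in Dupont--Scott form gives
\[
|\hat v-Q^{\delta}\hat v|_{\ell,\hat K}\le C\,|\hat v|_{\delta,\hat K},\qquad 0\le \ell\le \delta .
\]
The constant depends only on $\delta$ and the chunkiness bound, because the remainder of the averaged Taylor formula is a Riesz-potential-type operator controlled by the star-shapedness. The averaged Taylor polynomial commutes with dilations and translations. Using $|\hat v|_{j,\hat K}=h_K^{j-1}|v|_{j,K}$ in two dimensions and scaling back therefore yields $|v-v_\pi|_{\ell,K}\lesssim h_K^{\delta-\ell}|v|_{\delta,K}$.

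\textbf{Remaining cases.} For $\delta=0$ (and $\ell=0$) we take $v_\pi=0$, and the bound is trivial. For fractional $\delta=n+\theta$ with $0<\theta<1$ and $n+1\le m+1$, we use the Dupont--Scott fractional version. We take $v_\pi=Q^{n+1}v$, the averaged Taylor polynomial of degree $n$. The remainder is estimated through the Gagliardo seminorm of $D^n v$, again on the scaled element, which gives $|\hat v-Q^{n+1}\hat v|_{\ell,\hat K}\lesssim|\hat v|_{\delta,\hat K}$ for $\ell\le n=[\delta]$. The Slobodeckij seminorm scales as $|\hat v|_{\delta,\hat K}=h_K^{\delta-1}|v|_{\delta,K}$, so the same rescaling closes the argument.

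\textbf{Main obstacle.} The main obstacle is uniformity of the constant over arbitrary polygons, since the elements are not affine images of one reference element. This is exactly what the star-shapedness assumption $\boldsymbol{\mathcal{A}2}$ provides: the Dupont--Scott constant depends only on the chunkiness parameter and not on the shape of $K$. We will therefore cite the result from Ref.~\refcite{BSC} (Lemma~4.3.8 and Proposition~4.3.2 there) rather than reprove the kernel estimates.
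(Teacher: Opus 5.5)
Your proposal is correct and is essentially what the paper does: the paper gives no separate proof and simply invokes the Brenner--Scott averaged-Taylor-polynomial (Bramble--Hilbert) estimate under the star-shapedness assumption $\boldsymbol{\mathcal{A}2}$, and your scaling argument and appeal to the Dupont--Scott fractional version only make that citation explicit. One cosmetic point: the paper defines $h_K$ as the largest edge length rather than the diameter, but under $\boldsymbol{\mathcal{A}1}$--$\boldsymbol{\mathcal{A}2}$ the two are uniformly comparable, so your rescaled element has diameter $\simeq 1$ rather than exactly $1$ and the argument is unaffected.
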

	
As a direct consequence of Lemma~\ref{Lemma 5.4.2.2.2}, by a component-wise application of the scalar approximation estimate, we obtain the following result.
\begin{lemma}\label{lemma:approx_vector_matrix}
Let Assumption~$\boldsymbol{\mathcal{A}2}$ be satisfied. Then the following
	approximation estimates hold for all $\ell = 0,\ldots,[\delta]$:
	\begin{equation*}
		\begin{aligned}
			\|\boldsymbol{v} - \boldsymbol{\Pi}^{k-1}_K \boldsymbol{v}\|_{\ell,K}
			&\lesssim h_K^{\delta-\ell} |\boldsymbol{v}|_{\delta,K},
			\quad \forall\, \boldsymbol{v} \in \boldsymbol{H}^{\delta}(K),
			\quad 0 \leq \delta \leq k,\\ 
			\|\boldsymbol{M} - \boldsymbol{\Pi}^{k-2}_{K,\mathrm{mat}} \boldsymbol{M}\|_{\ell,K}
			&\lesssim h_K^{\delta-\ell} |\boldsymbol{M}|_{\delta,K},
			\quad \forall\, \boldsymbol{M} \in [H^{\delta}(K)]^{2 \times 2},
			\quad 0 \leq \delta \leq k-1,
		\end{aligned}
	\end{equation*}
\end{lemma}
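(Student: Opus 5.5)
The estimates follow from the best-approximation property of $L^2$-orthogonal projections combined with Lemma~\ref{Lemma 5.4.2.2.2}, applied component by component, and a stability argument for the higher Sobolev seminorms. The plan treats $\boldsymbol{\Pi}^{k-1}_K$ first; the matrix case $\boldsymbol{\Pi}^{k-2}_{K,\mathrm{mat}}$ is identical with $k-1$ replaced by $k-2$ and the four entries treated separately.

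\textbf{Case $\ell=0$.} Since $\boldsymbol{\Pi}^{k-1}_K$ is the $L^2(K)$-orthogonal projection onto $[\mathbb{P}_{k-1}(K)]^2$, and this space is the Cartesian product of scalar copies, the projection acts componentwise: $(\boldsymbol{\Pi}^{k-1}_K \boldsymbol{v})_i = \Pi^{k-1}_K v_i$. Best approximation gives
\[
\|v_i-\Pi^{k-1}_K v_i\|_{0,K}\le \|v_i - v_{i,\pi}\|_{0,K},
\]
where $v_{i,\pi}\in\mathbb{P}_{k-1}(K)$ is the polynomial supplied by Lemma~\ref{Lemma 5.4.2.2.2} with $m=k-1$. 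That lemma bounds the right-hand side by $h_K^{\delta}|v_i|_{\delta,K}$ for $0\le\delta\le k$. Summing over $i$ gives the $\ell=0$ estimate.

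\textbf{Case $\ell\ge1$.} Here best approximation no longer applies directly, so I split with the triangle inequality:
\[
|v_i-\Pi^{k-1}_K v_i|_{\ell,K}\le |v_i-v_{i,\pi}|_{\ell,K}+|\Pi^{k-1}_K(v_{i,\pi}-v_i)|_{\ell,K},
\]
using that $\Pi^{k-1}_K v_{i,\pi}=v_{i,\pi}$. The first term is handled by Lemma~\ref{Lemma 5.4.2.2.2}. For the second, the argument is a polynomial, so iterating the inverse inequality of Lemma~\ref{Lemma 5.3.1.3} $\ell$ times and then using $L^2$-stability of the projection gives
\[
|\Pi^{k-1}_K(v_{i,\pi}-v_i)|_{\ell,K}\lesssim h_K^{-\ell}\|\Pi^{k-1}_K(v_{i,\pi}-v_i)\|_{0,K}\le h_K^{-\ell}\|v_{i,\pi}-v_i\|_{0,K}\lesssim h_K^{\delta-\ell}|v_i|_{\delta,K}.
\]
This bounds every seminorm of order $\ell\le[\delta]$. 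Summing the seminorms for orders $0,\dots,\ell$ controls the full norm $\|\cdot\|_{\ell,K}$ as well, at the cost of lower powers $h_K^{\delta-j}$ with $j\le\ell$. These are dominated by $h_K^{\delta-\ell}$ because $h_K\le \mathrm{diam}(\Omega)$ is bounded; alternatively, the statement can be read with seminorms on the left.

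\textbf{Main obstacle.} The delicate step is the inverse estimate for higher seminorms on polygons. It relies on Assumption~$\boldsymbol{\mathcal{A}2}$, through Lemma~\ref{Lemma 5.3.1.3} applied repeatedly to derivatives, which are still polynomials, together with the uniform constants in Lemma~\ref{Lemma 5.4.2.2.2}. Assumption~$\boldsymbol{\mathcal{A}2}$ keeps all of these constants independent of $h_K$.
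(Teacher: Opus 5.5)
Your argument is correct. It follows the paper's route: the paper simply asserts the result as a componentwise consequence of Lemma~\ref{Lemma 5.4.2.2.2}, and you additionally make explicit the step it leaves implicit, namely passing from the existence of some good polynomial $v_\pi$ to the specific $L^2$-projection via best approximation for $\ell=0$ and via the inverse inequality of Lemma~\ref{Lemma 5.3.1.3} plus $L^2$-stability for $\ell\ge 1$, with the lower-order terms of the full norm $\|\cdot\|_{\ell,K}$ correctly absorbed using $h_K\le \mathrm{diam}(\Omega)$.
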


{Next, we recall the estimate for the interpolant $\phi_I \in Z_h^k$ of $\phi$ (see for instance~\cite{VEMCN,VEMPB}).
	\blue{
\begin{proposition}\label{VEM-approx}
	Assume that $\boldsymbol{\mathcal{A}1}$ and $\boldsymbol{\mathcal{A}2}$ are satisfied. Then, {for each $\phi \in H^{\xi}(\Omega)$}, there exists
	$\phi_I \in Z_h^k$, such that
	\[
	\|\phi - \phi_I\|_{m,\Omega}  \lesssim  \, h^{\xi-m} |\phi|_{\xi,\Omega},
	\qquad m = 0,1,2, \qquad 
	2 \le \xi \le k+1, \qquad k \ge 2.
	\]
\end{proposition}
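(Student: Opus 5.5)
The approach is to build $\phi_I$ element by element from a local polynomial approximation. The degrees of freedom then glue the local pieces into a globally $C^1$ function, and local error bounds are summed. Concretely, let $\phi_I \in Z_h^k$ be the degrees-of-freedom interpolant of $\phi$. It is defined by requiring that $\boldsymbol{D_1}$--$\boldsymbol{D_5}$ of $\phi_I$ coincide with those of $\phi$ on every element. This makes sense for $\xi\ge 2$ only if point values of $\nabla\phi$ are available. When $\xi>3$ this holds by Sobolev embedding. For $2\le \xi\le 3$ I would first replace $\phi$ by a quasi-interpolant: a locally averaged Cl\'ement/Scott--Zhang-type polynomial on patches, with vertex values and gradients taken from averaged polynomials $\phi_\pi$ from Lemma~\ref{Lemma 5.4.2.2.2}. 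I then define $\phi_I$ through its degrees of freedom. The moment degrees of freedom $\boldsymbol{D_3}$--$\boldsymbol{D_5}$ are single-valued on shared edges, and vertex values and gradients are single-valued by construction. Hence $\phi_I|_K\in Z_h^k(K)$ glue to a $C^1$, hence $H^2$-conforming, function. Moreover $\phi_I=\partial_{\n}\phi_I=0$ on $\Gamma$ whenever $\phi\in Z$, so $\phi_I\in Z_h^k$.

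The main estimate is local. On each $K$ I write
\[
\phi-\phi_I=(\phi-\phi_\pi)+(\phi_\pi-\phi_I),
\]
where $\phi_\pi\in\mathbb{P}_k(K)$ is given by Lemma~\ref{Lemma 5.4.2.2.2}. The first term is bounded in $|\cdot|_{m,K}$ by $h_K^{\xi-m}|\phi|_{\xi,K}$, or by the patch version $|\phi|_{\xi,\omega_K}$ in the quasi-interpolant case. For the second term I use that $\mathbb{P}_k(K)\subseteq Z_h^k(K)$ and that the interpolant reproduces polynomials. Hence $\phi_\pi-\phi_I=\mathcal{I}_K(\phi_\pi-\phi)$ is the virtual function whose degrees of freedom are those of $\phi_\pi-\phi$. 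Its $H^2$ seminorm is controlled by the stabilization norm: by Proposition~\ref{Lemma:SD:stab2}, with $\nu$ fixed,
\[
|\mathcal{I}_K(\phi_\pi-\phi)|_{2,K}^2\lesssim \mathscr{S}_D^K(\phi_\pi-\phi,\phi_\pi-\phi).
\]
The right-hand side depends only on the degrees of freedom. These can be bounded exactly as in the proof of Proposition~\ref{Lemma:SD:stab1}, through Sobolev, trace and Cauchy--Schwarz inequalities applied now to $w=\phi-\phi_\pi\in H^\xi(K)$. This gives
\[
\lesssim h_K^{-4}\|w\|_{0,K}^2+h_K^{-2}|w|_{1,K}^2+|w|_{2,K}^2+h_K^{2(\xi-2)}|w|_{\xi,K}^2
\lesssim h_K^{2(\xi-2)}|\phi|_{\xi,K}^2 .
\]
The extra term involving $|w|_{\xi,K}$ covers the $L^\infty$ bound on $\nabla w$ when $\xi>3$.

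For the lower norms, $m=0,1$, I apply the Poincar\'e inequalities \eqref{Poincare2:H1} and \eqref{Poincare:H2} to $v=\phi_\pi-\phi_I$. The boundary averages of $v$ and $\nabla v$ are controlled by the vertex and edge degrees of freedom of $w$, which are already estimated. This yields $h_K^{-2}\|v\|_{0,K}+h_K^{-1}|v|_{1,K}\lesssim h_K^{\xi-2}|\phi|_{\xi,K}$. Summing over $K$, and using finite patch overlap guaranteed by $\boldsymbol{\mathcal{A}1}$--$\boldsymbol{\mathcal{A}2}$, gives the global bound with $h=\max h_K$.

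The main obstacle I expect is the low-regularity range $2\le\xi\le 3$. There the vertex gradient degrees of freedom $\boldsymbol{D_2}$ are not defined pointwise for $\phi$ itself, so the averaged quasi-interpolant is needed. I must then check that the averaged vertex data still reproduce polynomials locally and remain $H^\xi$-stable on patches, so that the bound on $\mathscr{S}_D^K$ goes through with $|\phi|_{\xi,\omega_K}$. A secondary technical point is the uniform-in-$K$ validity of the Sobolev embedding $\|\nabla w\|_{L^\infty(K)}\lesssim h_K^{-1}|w|_{1,K}+|w|_{2,K}+h_K^{\xi-2}|w|_{\xi,K}$, with the $h_K^{\xi-2}$ scaling, on star-shaped polygons. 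This follows by scaling to unit size under $\boldsymbol{\mathcal{A}2}$.
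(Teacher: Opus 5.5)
The paper gives no proof of this proposition; it only recalls it from Refs.~\refcite{VEMCN} and \refcite{VEMPB}. Your route is a legitimate, self-contained argument built from the paper's own tools, and it works as follows.
\begin{itemize}
\item The degrees-of-freedom interpolant reproduces $\mathbb{P}_k(K)$. Hence $\phi_\pi-\phi_I$ is the virtual function carrying the degrees of freedom of $\phi_\pi-\phi$.
\item Proposition~\ref{Lemma:SD:stab2}, which holds on all of $Z_h^k(K)$, bounds $|\phi_\pi-\phi_I|_{2,K}^2$ by $\mscr{S}_D^K(w,w)$. This quantity is then estimated by the Sobolev, trace and Cauchy--Schwarz arguments of Proposition~\ref{Lemma:SD:stab1}.
\item The lower-order norms follow from \eqref{Poincare2:H1} and \eqref{Poincare:H2}. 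This works because the traces of $\phi_\pi-\phi_I$ and of its gradient on $\partial K$ are polynomials fixed by those same degrees of freedom.
\end{itemize}
You are also right to require $\phi\in Z$. For arbitrary $\phi\in H^\xi(\Omega)$ the statement cannot hold in $Z_h^k$; take $\phi\equiv 1$.

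There is, however, a concrete error, and as written it leaves precisely the relevant range unproved. In two dimensions $H^{\xi}(K)\hookrightarrow C^1(\overline K)$ for every $\xi>2$, not only for $\xi>3$. Indeed, the scaled bound $\|\nabla w\|_{L^\infty(K)}\lesssim h_K^{-1}|w|_{1,K}+|w|_{2,K}+h_K^{\xi-2}|w|_{\xi,K}$ that you state yourself holds for all $\xi>2$.

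Because you send all of $2\le\xi\le 3$ to an averaged quasi-interpolant, your proof rests on a step whose properties you leave unchecked: polynomial reproduction, patch stability and vanishing boundary data. Consequently your argument rests entirely on that step for $k=2$. The same is true for $\xi=2+s$ with $s\in(1/2,1]$, which is exactly how the proposition is used in Lemma~\ref{Lemma 5.4.2.1.0}.

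With the correct threshold, your direct argument already covers all of $2<\xi\le k+1$. Averaging is then needed only at the endpoint $\xi=2$, where $\boldsymbol{D_2}$ is undefined. There you must still set the vertex data at boundary vertices to zero. You must also compare neighbouring element polynomials through shared edges. Bounded vertex valence follows from $\boldsymbol{\mathcal{A}2}$, and comparable neighbour sizes from $\boldsymbol{\mathcal{A}1}$.

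A smaller point: for non-integer $\xi$, Lemma~\ref{Lemma 5.4.2.2.2} does not control the term $h_K^{\xi-2}|w|_{\xi,K}$. Choose $\phi_\pi$ of degree $\lceil\xi\rceil-1\le k$. Then $D^{[\xi]}\phi_\pi$ is constant and $|w|_{\xi,K}=|\phi|_{\xi,K}$.
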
}

The following lemmas are instrumental in deriving the error bounds.
    \begin{lemma}\label{lemma:B1}
 For all $\psi_h, \phi_h \in Z_h^k$ holds
 \footnotesize{
 \begin{equation*}
    \begin{split}
  \widetilde{\mscr{B}}^{K}(\psi_h,\phi_h) - \widetilde{\mscr{B}}^{h,K}(\psi_h,\phi_h) 
  &=  \big(\big(\textbf{I}-  \boldsymbol{\Pi}_{K,\text{mat}}^{k-2}\big) ( \nabla \mathrm{\bf curl}(\psi_h)), \big(\textbf{I}-  \boldsymbol{\Pi}_{K,\text{mat}}^{k-2}  \big)(\mathrm{\bf curl}(\phi_h) \bbeta^T)  \big)_{K} \\
&  \quad + 
  \big(\big( \boldsymbol{\Pi}_{K,\text{mat}}^{k-2}-\textbf{I}\big)  (\nabla \mathrm{\bf curl}(\psi_h))  \bbeta , \big(\textbf{I}-  \boldsymbol{\Pi}_{K}^{k-1}  \big) \mathrm{\bf curl}(\phi_h) \big)_{K} \\
 & \quad + \big(\big(\textbf{I} - \boldsymbol{\Pi}_{K}^{k-2}\big)  (\nabla  \mathrm{\bf curl}(\psi_h) \bbeta),  \big( \textbf{I}-  \boldsymbol{\Pi}_{K}^{k-1}  \big) \mathrm{\bf curl}(\phi_h)\big)_{K}.   
    \end{split} 
 \end{equation*}
}
    \end{lemma}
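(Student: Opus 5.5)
The identity is purely algebraic. It follows from splitting the difference into two pieces and using the $L^2$-orthogonality of the projectors $\boldsymbol{\Pi}_{K,\text{mat}}^{k-2}$, $\boldsymbol{\Pi}_{K}^{k-1}$ and $\boldsymbol{\Pi}_{K}^{k-2}$; the last is understood as the componentwise $L^2(K)$-projection onto $[\mathbb{P}_{k-2}(K)]^2$. No approximation estimates or degrees of freedom are needed.

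For brevity write $\boldsymbol{G}:=\nabla\curl(\psi_h)$, $\boldsymbol{c}:=\curl(\phi_h)$, $\boldsymbol{P}:=\boldsymbol{\Pi}_{K,\text{mat}}^{k-2}$ and $\boldsymbol{Q}:=\boldsymbol{\Pi}_{K}^{k-1}$. The first observation is the pointwise identity $(\boldsymbol{M}\bbeta)\cdot\boldsymbol{v}=\boldsymbol{M}:(\boldsymbol{v}\bbeta^T)$ for a matrix $\boldsymbol{M}$ and vectors $\boldsymbol{v},\bbeta$. With it we write $\widetilde{\mscr{B}}^{K}(\psi_h,\phi_h)=(\boldsymbol{G}\bbeta,\boldsymbol{c})_K$, and $\widetilde{\mscr{B}}^{h,K}(\psi_h,\phi_h)=(\boldsymbol{P}\boldsymbol{G}\,\bbeta,\boldsymbol{Q}\boldsymbol{c})_K$ by \eqref{5.3.2.2b}. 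We then split the difference as
\begin{align*}
(\boldsymbol{G}\bbeta,\boldsymbol{c})_K-(\boldsymbol{P}\boldsymbol{G}\,\bbeta,\boldsymbol{Q}\boldsymbol{c})_K
=\big((\textbf{I}-\boldsymbol{P})\boldsymbol{G}\,\bbeta,\boldsymbol{c}\big)_K+\big(\boldsymbol{P}\boldsymbol{G}\,\bbeta,(\textbf{I}-\boldsymbol{Q})\boldsymbol{c}\big)_K .
\end{align*}

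We treat the first term with the matrix identity. Since $(\textbf{I}-\boldsymbol{P})\boldsymbol{G}$ is $L^2(K)$-orthogonal to $[\mathbb{P}_{k-2}(K)]^{2\times2}$ (Lemma~\ref{Lemma 5.3.1.1.1}), we may subtract $\boldsymbol{P}(\boldsymbol{c}\bbeta^T)$ from the second argument:
\begin{align*}
\big((\textbf{I}-\boldsymbol{P})\boldsymbol{G}\,\bbeta,\boldsymbol{c}\big)_K=\big((\textbf{I}-\boldsymbol{P})\boldsymbol{G},\boldsymbol{c}\bbeta^T\big)_K=\big((\textbf{I}-\boldsymbol{P})\boldsymbol{G},(\textbf{I}-\boldsymbol{P})(\boldsymbol{c}\bbeta^T)\big)_K .
\end{align*}
This is the first term in the statement.

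For the second piece we add and subtract $\boldsymbol{G}\bbeta$ in the first slot:
\begin{align*}
\big(\boldsymbol{P}\boldsymbol{G}\,\bbeta,(\textbf{I}-\boldsymbol{Q})\boldsymbol{c}\big)_K=\big((\boldsymbol{P}-\textbf{I})\boldsymbol{G}\,\bbeta,(\textbf{I}-\boldsymbol{Q})\boldsymbol{c}\big)_K+\big(\boldsymbol{G}\bbeta,(\textbf{I}-\boldsymbol{Q})\boldsymbol{c}\big)_K .
\end{align*}
The first summand is the second term in the statement. For the last summand we note that $\boldsymbol{\Pi}_K^{k-2}(\boldsymbol{G}\bbeta)\in[\mathbb{P}_{k-2}(K)]^2\subset[\mathbb{P}_{k-1}(K)]^2$. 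Hence it is orthogonal to $(\textbf{I}-\boldsymbol{Q})\boldsymbol{c}$ by the definition of $\boldsymbol{\Pi}_K^{k-1}$, and subtracting it gives $\big((\textbf{I}-\boldsymbol{\Pi}_K^{k-2})(\boldsymbol{G}\bbeta),(\textbf{I}-\boldsymbol{Q})\boldsymbol{c}\big)_K$, the third term. Adding the three pieces yields the claimed identity.

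There is no genuine obstacle here. The only points needing care are bookkeeping ones: the matrix/vector pairing convention in $\widetilde{\mscr{B}}^{h,K}$ (so that $(\boldsymbol{M}\bbeta,\boldsymbol{v})_K=(\boldsymbol{M},\boldsymbol{v}\bbeta^T)_K$ with the Frobenius product), and the polynomial-degree inclusion $k-2\le k-1$ that permits the last orthogonality step. All quantities are well defined since $\psi_h,\phi_h\in H^2(K)$ and $\bbeta\in\boldsymbol{W}^{1,\infty}(\Omega)$.
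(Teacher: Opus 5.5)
Your proof is correct and follows the same route as the paper. You use the identical add-and-subtract splitting of $(\nabla\curl(\psi_h)\bbeta,\curl(\phi_h))_K-(\boldsymbol{\Pi}_{K,\text{mat}}^{k-2}\nabla\curl(\psi_h)\bbeta,\boldsymbol{\Pi}_K^{k-1}\curl(\phi_h))_K$ followed by $L^2$-orthogonality of the projectors, and you make explicit the steps the paper only invokes in passing: the Frobenius rewriting with $\curl(\phi_h)\bbeta^T$, and the inclusion $\mathbb{P}_{k-2}(K)\subset\mathbb{P}_{k-1}(K)$ for the last term.
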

    \begin{proof}
   Adding and subtracting adequate terms, we have  
   \begin{equation*}
 \begin{split}
 & \widetilde{\mscr{B}}^{K}(\psi_h,\phi_h) - \widetilde{\mscr{B}}^{h,K}(\psi_h,\phi_h) \\
&=  \big(\big(  \nabla \curl(\psi_h) \big) \bbeta , \curl(\phi_h)\big)_{K}-\big(\big( \boldsymbol{\Pi}_{K,\text{mat}}^{k-2}  \nabla \curl(\psi_h) \big) \bbeta, \boldsymbol{\Pi}_{K}^{k-1} \curl(\phi_h)\big)_{K}\\
 &=  \big(\big(  \nabla \curl(\psi_h) \big) \bbeta , \curl(\phi_h)\big)_{K} -\big(\big( \boldsymbol{\Pi}_{K,\text{mat}}^{k-2}  \nabla \curl(\psi_h) \big) \bbeta , \boldsymbol{\Pi}_{K}^{k-1} \curl(\phi_h)\big)_{K}\\
 & \quad + \big(\big( \boldsymbol{\Pi}_{K,\text{mat}}^{k-2}  \nabla \curl(\psi_h) \big) \bbeta, \curl(\phi_h)\big)_{K} - \big(\big( \boldsymbol{\Pi}_{K,\text{mat}}^{k-2}  \nabla \curl(\psi_h) \big) \bbeta,  \curl(\phi_h)\big)_{K}\\
 &=  \big(\big(  \nabla \curl(\psi_h) -  \boldsymbol{\Pi}_{K,\text{mat}}^{k-2}  \nabla \curl(\psi_h)\big) \bbeta , \curl(\phi_h)\big)_{K}\\
 & \quad + \big(\big( \boldsymbol{\Pi}_{K,\text{mat}}^{k-2}  \nabla \curl(\psi_h) \big) \bbeta, \curl(\phi_h)- \boldsymbol{\Pi}_{K}^{k-1} \curl(\phi_h)\big)_{K}\\
 &=  \big(   \big( \textbf{I}-  \boldsymbol{\Pi}_{K,\text{mat}}^{k-2}\big) ( \nabla \curl(\psi_h))  , \curl(\phi_h)  \bbeta^T\big)_{K}\\
 & \quad + \big( \big( \boldsymbol{\Pi}_{K,\text{mat}}^{k-2} - \textbf{I} \big) (\nabla \curl(\psi_h))  \bbeta, \big( \textbf{I}-  \boldsymbol{\Pi}_{K}^{k-1}  \big) \curl(\phi_h)\big)_{K}\\
 & \quad + \big(\nabla \curl(\psi_h) \bbeta,  \big( \textbf{I}-  \boldsymbol{\Pi}_{K}^{k-1}  \big) \curl(\phi_h)\big)_{K}.
 \end{split}   
   \end{equation*} 
 Thus, we obtain the desired result by using the orthogonality properties of the involved $L^2$-projections. 
    \end{proof}
      \begin{lemma}\label{lemma:B2}
      For all $\psi_h, \phi_h \in Z_h^k$ holds
 \begin{equation*}
    \begin{split}
  \widetilde{\mscr{B}}^{h,K}(\phi_h,\psi_h) - \widetilde{\mscr{B}}^{K}(\phi_h,\psi_h) 
  &=  \Big(  \big( \boldsymbol{\Pi}_{K,\text{mat}}^{k-2}  \nabla \mathrm{\bf curl}(\phi_h)\big) \bbeta ,   \big(   \boldsymbol{\Pi}_{K}^{k-1}- \textbf{I}  \big) \mathrm{\bf curl}(\psi_h) \Big)_{K}    \\
&  \hspace{-1cm} 
  + \Big(  \big( \boldsymbol{\Pi}_{K,\text{mat}}^{k-2}-\textbf{I}\big)  (\nabla \mathrm{\bf curl}(\phi_h)),  \big(   \boldsymbol{\Pi}_{K,\text{mat}}^{k-2} - \textbf{I} \big)(\mathrm{\bf curl}(\psi_h)  \bbeta^T) \Big)_{K}.   
   \end{split} 
 \end{equation*} 
    \end{lemma}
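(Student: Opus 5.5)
We prove the identity of Lemma~\ref{lemma:B2} in the same way as Lemma~\ref{lemma:B1}, now with the roles of the arguments interchanged. The two forms are
\[
\widetilde{\mscr{B}}^{K}(\phi_h,\psi_h)=\big((\nabla \curl(\phi_h))\bbeta,\curl(\psi_h)\big)_K,
\qquad
\widetilde{\mscr{B}}^{h,K}(\phi_h,\psi_h)=\big((\boldsymbol{\Pi}_{K,\text{mat}}^{k-2}\nabla\curl(\phi_h))\bbeta,\boldsymbol{\Pi}_{K}^{k-1}\curl(\psi_h)\big)_K .
\]
We add and subtract the intermediate quantity $\big((\boldsymbol{\Pi}_{K,\text{mat}}^{k-2}\nabla\curl(\phi_h))\bbeta,\curl(\psi_h)\big)_K$. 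This splits the difference into two pieces:
\[
\Big((\boldsymbol{\Pi}_{K,\text{mat}}^{k-2}\nabla\curl(\phi_h))\bbeta,(\boldsymbol{\Pi}_{K}^{k-1}-\textbf{I})\curl(\psi_h)\Big)_K
+\Big((\boldsymbol{\Pi}_{K,\text{mat}}^{k-2}-\textbf{I})(\nabla\curl(\phi_h))\bbeta,\curl(\psi_h)\Big)_K .
\]
The first piece is already the first term of the claimed identity.

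For the second piece, we use the pointwise algebraic identity $(\boldsymbol{M}\bbeta)\cdot\boldsymbol{v}=\boldsymbol{M}:(\boldsymbol{v}\bbeta^T)$, valid for any matrix $\boldsymbol{M}$ and vectors $\bbeta,\boldsymbol{v}$. This rewrites the second piece as
\[
\Big((\boldsymbol{\Pi}_{K,\text{mat}}^{k-2}-\textbf{I})(\nabla\curl(\phi_h)),\,\curl(\psi_h)\bbeta^T\Big)_K .
\]
The same identity was used in the proof of Lemma~\ref{lemma:B1}.

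Next we use the $L^2$-orthogonality built into Lemma~\ref{Lemma 5.3.1.1.1}. The operator $\boldsymbol{\Pi}_{K,\text{mat}}^{k-2}$ is the $L^2(K)$-projection onto $[\mathbb{P}_{k-2}(K)]^{2\times2}$. Hence $(\boldsymbol{\Pi}_{K,\text{mat}}^{k-2}-\textbf{I})(\nabla\curl(\phi_h))$ is orthogonal to every matrix polynomial of degree at most $k-2$. In particular, it is orthogonal to $\boldsymbol{\Pi}_{K,\text{mat}}^{k-2}(\curl(\psi_h)\bbeta^T)$. Subtracting this zero term from the second argument replaces $\curl(\psi_h)\bbeta^T$ by
\[
(\textbf{I}-\boldsymbol{\Pi}_{K,\text{mat}}^{k-2})(\curl(\psi_h)\bbeta^T).
\]
Changing the sign in both factors, which leaves the product unchanged, gives the second term of the statement. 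Here $\boldsymbol{\Pi}_{K,\text{mat}}^{k-2}$ is understood as the matrix $L^2$-projection applied to an arbitrary $[L^2(K)]^{2\times2}$ field, as in Lemma~\ref{lemma:approx_vector_matrix}. This is needed because $\curl(\psi_h)\bbeta^T$ is not of the form $\nabla\curl(\cdot)$.

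The only step requiring some care is exactly this orthogonality step. We must check two things: that $\curl(\psi_h)\bbeta^T\in[L^2(K)]^{2\times2}$, which holds since $\bbeta\in\boldsymbol{W}^{1,\infty}(\Omega)$ and $\psi_h\in H^2(K)$; and that the projection is defined on all of $[L^2(K)]^{2\times2}$ rather than only on gradients of curls. Once this is settled, the rest is elementary algebra. No computability or approximation results are needed, since the lemma is a pure identity.
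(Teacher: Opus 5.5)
Your splitting and the orthogonality step are correct. They are exactly what the paper's one-line proof (``adding and subtracting adequate terms and employing the orthogonality'') has in mind. The gap is in your last sentence.

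Write $\boldsymbol{M}:=\nabla\curl(\phi_h)$, $\boldsymbol{v}:=\curl(\psi_h)$ and $P:=\boldsymbol{\Pi}_{K,\text{mat}}^{k-2}$. After subtracting the zero term you correctly arrive at $\big((P-\textbf{I})\boldsymbol{M},(\textbf{I}-P)(\boldsymbol{v}\bbeta^T)\big)_K$. Flipping the sign of both factors gives $\big((\textbf{I}-P)\boldsymbol{M},(P-\textbf{I})(\boldsymbol{v}\bbeta^T)\big)_K$. That is the same quantity written differently, not the statement's term $\big((P-\textbf{I})\boldsymbol{M},(P-\textbf{I})(\boldsymbol{v}\bbeta^T)\big)_K$. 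To reach the latter you must flip only the second factor, which changes the sign. What your argument actually proves is therefore
\begin{equation*}
\widetilde{\mscr{B}}^{h,K}(\phi_h,\psi_h)-\widetilde{\mscr{B}}^{K}(\phi_h,\psi_h)
=\big((P\boldsymbol{M})\bbeta,(\boldsymbol{\Pi}_{K}^{k-1}-\textbf{I})\boldsymbol{v}\big)_K
-\big((P-\textbf{I})\boldsymbol{M},(P-\textbf{I})(\boldsymbol{v}\bbeta^T)\big)_K,
\end{equation*}
with a minus sign in front of the second term.

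This is not a flaw in your method but in the printed statement: its second term has the wrong sign. The two versions differ by $2\big((P-\textbf{I})\boldsymbol{M},(P-\textbf{I})(\boldsymbol{v}\bbeta^T)\big)_K$, which does not vanish in general. The underlying fact is $(Pm-m,w)=-\big((\textbf{I}-P)m,(\textbf{I}-P)w\big)$ for any orthogonal projection $P$.

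You can also cross-check against Lemma~\ref{lemma:B1}, which is correct. With arguments $(\phi_h,\psi_h)$, its left-hand side is exactly minus the one here. Its last two terms combine into $\big((P\boldsymbol{M})\bbeta,(\textbf{I}-\boldsymbol{\Pi}_K^{k-1})\boldsymbol{v}\big)_K$, because $(\textbf{I}-\boldsymbol{\Pi}_K^{k-1})\boldsymbol{v}$ is $L^2$-orthogonal to $[\mathbb{P}_{k-2}(K)]^2$. Its first term $\big((\textbf{I}-P)\boldsymbol{M},(\textbf{I}-P)(\boldsymbol{v}\bbeta^T)\big)_K$ carries a plus sign, so it must carry a minus sign here.

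The slip is harmless downstream, since in the bound for $T_{\mscr{B}_2}$ each term is only estimated via Cauchy--Schwarz. Still, a correct proof has to end at the corrected identity rather than argue its way to the printed one. Your remarks about applying $P$ to the non-gradient field $\boldsymbol{v}\bbeta^T\in[L^2(K)]^{2\times 2}$ are fine.
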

    \begin{proof}
    The proof follows by adding and subtracting adequate terms and employing the orthogonality properties of the involved $L^2$-projections. 
    \end{proof}
	\subsection{A Priori Error Estimates}\label{Subsection:Apriori:error}
	In this subsection, we develop a priori error estimates for the state, co-state
	and control variable. Firstly, we consider the so-called auxiliary equations. For arbitrary functions $\uone,\utwo \in \L^{2}(\Omega)$, let $\psi_h(\uone), \psi_h(\utwo), \vartheta_h(\psi_1)$ and $\vartheta_h(\psi_2)$ be the solutions of the following systems:
	\begin{align}
		\label{5.4.2.1}
		{\mscr{O}_{s}^h(\psi_h(\uone),\phi_h)} &= \mscr{L}_{\f,\uone}^{h}(\phi_h) &&\forall \phi_h \in Z_h^{k},\\ 
	\label{5.4.2.1.0}	{\mscr{O}_{s}^h(\psi_h(\utwo),\phi_h)} &= \mscr{L}_{\f,\utwo}^{h}(\phi_h) && \forall \phi_h \in Z_h^{k},
		\\
		\label{5.4.2.2} 
		{\mscr{O}_{a}^{h}(\tau_h,\vartheta_h(\psi_1))} &= \mscr{L}_{\psi_d,\psi_1}^{h}(\tau_h) &&\forall \tau_h \in Z_h^{k},\\
		\label{5.4.2.22} {\mscr{O}_{a}^{h}(\tau_h,\vartheta_h(\psi_2))} &= \mscr{L}_{\psi_d,\psi_2}^{h}(\tau_h) && \forall \tau_h \in Z_h^{k}.
	\end{align} 
	\indent The discrete linear functionals $\mscr{L}_{\f,\uone}^{h}, \mscr{L}_{\f,\utwo}^{h}, \mscr{L}_{\psi_d,\psi_1}^{h}$ and $\mscr{L}_{\psi_d,\psi_2}^{h}$ are defined as:
	\begin{align*}
		\mscr{L}_{\f,\uone}^{h} (\phi_h) &:= \sum_{K \in \mscr{T}_{h}} \mscr{L}_{\f,\uone}^{h,K} (\phi_h), && 
		\mscr{L}_{\f,\utwo}^{h} (\phi_h) := \sum_{K \in \mscr{T}_{h}} \mscr{L}_{\f,\utwo}^{h,K} (\phi_h), \\ 
		\mscr{L}_{\psi_d,\psi_1}^{h}(\tau_h) &:= \sum_{K \in \mscr{T}_{h}} \mscr{L}_{\psi_d,\psi_1}^{h,K}(\tau_h),
		&&\mscr{L}_{\psi_d,\psi_2}^{h}(\tau_h) := \sum_{K \in \mscr{T}_{h}} \mscr{L}_{\psi_d,\psi_2}^{h,K}(\tau_h),
	\end{align*}
where 
	\begin{align*}
	  \mscr{L}_{\f,\boldsymbol{u_{i}}}^{h,K} (\phi_h) &:= \Big( \boldsymbol{\Pi}_{K}^{k-1} (\f+\boldsymbol{u_{i}}),\boldsymbol{\Pi}_{K}^{k-1} \curl(\phi_h)\Big)_{K} &&\forall \phi_h \in Z_h^{k}, \\ 
	 \mscr{L}_{\psi_d,\psi_i}^{h,K}(\tau_h) &:= \Big( \boldsymbol{\Pi}_{K}^{k-1} (\curl(\psi_i)-\curl(\psi_d)), \boldsymbol{\Pi}_{K}^{k-1} \curl(\tau_h)\Big)_{K} &&\forall \tau_h \in Z_h^{k},
\end{align*}
	for $i=1,2$. Building on the auxiliary problems discussed above, we now establish the following lemma, which plays a crucial role in the subsequent error analysis.
	\begin{lemma}\label{Lemma 5.4.2.1}
		Suppose $(\psi_h(\uone), \vartheta_h(\psi_1))$ and $(\psi_h(\utwo), \vartheta_h(\psi_2))$ are solutions of the auxiliary problems (\ref{5.4.2.1}-\ref{5.4.2.22}). Then, we have the following estimates:
		\begin{align}\label{5.4.2.3}
			\|\psi_h(\uone) - \psi_h(\utwo)\|_{Z} &\le C_{1}\|\uone-\utwo\|_{0}, &&
			 \|\vartheta_h(\psi_1) - \vartheta_h(\psi_2)\|_{Z} \le C_{2}\|\psi_1-\psi_2\|_{0},
		\end{align}
		where $C_1 := \alpha_{0}^{-1}C_{\mscr{L}_{\f,\u}}$ and $C_2:= \alpha_{0}^{-1} C_{\mscr{L}_{\psi_d,\psi}}$ are positive constants.
	\end{lemma}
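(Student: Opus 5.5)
The plan is to exploit linearity of the discrete problems together with the coercivity \eqref{5.3.2.7g} of $\mscr{O}_s^h$ and the continuity of the discrete right-hand sides from Lemma~\ref{Lemma 5.3.2.2}. For the state estimate, subtract \eqref{5.4.2.1.0} from \eqref{5.4.2.1}. Since $\mscr{O}_s^h$ is bilinear and $\mscr{L}^h_{\f,\boldsymbol{u_i}}$ is affine in $\boldsymbol{u_i}$, the difference $e_h := \psi_h(\uone)-\psi_h(\utwo)\in Z_h^k$ satisfies
\begin{align*}
\mscr{O}_s^h(e_h,\phi_h) = \sum_{K\in\mscr{T}_h}\big(\boldsymbol{\Pi}_K^{k-1}(\uone-\utwo),\boldsymbol{\Pi}_K^{k-1}\curl(\phi_h)\big)_K \qquad \forall \phi_h\in Z_h^k,
\end{align*}
because the contributions of $\f$ cancel.

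Next, take $\phi_h=e_h$. The left side is bounded below by $\alpha_0\|e_h\|_Z^2$ by \eqref{5.3.2.7g}, where $\mscr{B}^h(e_h,e_h)=0$ by skew-symmetry. On the right, use Cauchy--Schwarz, the $L^2$-stability of $\boldsymbol{\Pi}_K^{k-1}$ (norm at most one), and $|e_h|_1\le C_p\|e_h\|_Z$. This is exactly the argument behind \eqref{5.3.2.7d} with $\f=\boldsymbol{0}$ and $\uh$ replaced by $\uone-\utwo$, and it gives $|\cdot|\le C_{\mscr{L}_{\f,\u}}\|\uone-\utwo\|_0\|e_h\|_Z$. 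Dividing by $\|e_h\|_Z$ yields the first bound with $C_1=\alpha_0^{-1}C_{\mscr{L}_{\f,\u}}$.

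For the co-state estimate, repeat the argument with $\varepsilon_h:=\vartheta_h(\psi_1)-\vartheta_h(\psi_2)$. Subtracting \eqref{5.4.2.22} from \eqref{5.4.2.2}, the $\psi_d$ terms cancel, and the right-hand side becomes $\sum_K(\boldsymbol{\Pi}_K^{k-1}\curl(\psi_1-\psi_2),\boldsymbol{\Pi}_K^{k-1}\curl(\tau_h))_K$. Test with $\tau_h=\varepsilon_h$. The adjoint form satisfies $\mscr{O}_a^h(\varepsilon_h,\varepsilon_h)=\mscr{A}^h(\varepsilon_h,\varepsilon_h)+\mscr{C}^h(\varepsilon_h,\varepsilon_h)\ge\alpha_0\|\varepsilon_h\|_Z^2$, again since $\mscr{B}^h(\varepsilon_h,\varepsilon_h)=0$ and by \eqref{5.3.2.7c}--\eqref{5.3.2.7cc}. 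Continuity of $\mscr{L}^h_{\psi_d,\psi}$ from \eqref{5.3.2.7d} bounds the right side by $C_{\mscr{L}_{\psi_d,\psi}}\|\curl(\psi_1-\psi_2)\|_0\|\varepsilon_h\|_Z$.

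The one delicate point, and the main obstacle, is that the statement measures the data in $\|\psi_1-\psi_2\|_0$, whereas the natural quantity produced here is $\|\curl(\psi_1-\psi_2)\|_0=|\psi_1-\psi_2|_1$. I would read the norm in \eqref{5.4.2.3} as the $L^2$-norm of $\curl(\psi_1-\psi_2)$, consistent with how the control-to-state map $\boldsymbol{\mathcal{S}}$ and $\mscr{L}_{\psi_d,\psi}$ are written. Under that reading the second bound follows directly with $C_2=\alpha_0^{-1}C_{\mscr{L}_{\psi_d,\psi}}$, and this is the form used later when $\psi_i=\psi_h(\boldsymbol{u_i})$. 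If a genuine $L^2$ bound on $\psi_1-\psi_2$ itself is needed, no inverse estimate is available in general, so this interpretation must be fixed rather than derived.
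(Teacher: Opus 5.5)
Your argument is correct and is essentially the paper's proof: subtract the two discrete problems, test with the difference, apply the coercivity \eqref{5.3.2.7g} (using that $\mscr{B}^h(\cdot,\cdot)$ vanishes on the diagonal), and bound the difference of the right-hand sides by Cauchy--Schwarz and the $L^2$-stability of $\boldsymbol{\Pi}_K^{k-1}$; the paper treats the co-state case ``by similar steps,'' as you do. Your caveat about the second estimate is well founded: those steps only control $\|\curl(\psi_1-\psi_2)\|_0=|\psi_1-\psi_2|_1$, and a bound by $\|\psi_1-\psi_2\|_0$ alone cannot hold uniformly, because the discontinuous $\boldsymbol{\Pi}_K^{k-1}\curl(\tau_h)$ pairs with traces of $\psi_1-\psi_2$ on interelement edges. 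This $H^1$ reading is also what Theorem~\ref{Theorem 5.4.2.3} actually uses, where the estimate is combined with the $H^1$-bound for $\psi-\psi_h$.
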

	\begin{proof}
		On subtracting \eqref{5.4.2.1.0} from \eqref{5.4.2.1}, we get
		\begin{align*}
			{\mscr{O}_{s}^h(\psi_h(\uone)-\psi_h(\utwo),\phi_h)} = \mscr{L}_{\f,\uone}^{h}(\phi_h) - \mscr{L}_{\f,\utwo}^{h}(\phi_h).
		\end{align*}
		\indent Now, we substitute $\phi_h = \psi_h(\uone)-\psi_h(\utwo)$ and use Lemma~\ref{Lemma 5.3.2.2} to obtain
		\begin{align*}
			\alpha_{0} \|\psi_h(\uone)-\psi_h(\utwo)\|_{Z}^{2} \le  \mscr{L}_{\f,\uone}^{h}(\psi_h(\uone)-\psi_h(\utwo)) - \mscr{L}_{\f,\utwo}^{h}(\psi_h(\uone)-\psi_h(\utwo)).
		\end{align*}
		\indent By using the definition of functionals $\mscr{L}_{\f,\uone}$ and $\mscr{L}_{\f,\utwo}$ and the Cauchy-Schwarz inequality, we have 
		\begin{align*}
			\alpha_{0} \|\psi_h(\uone)-\psi_h(\utwo)\|_{Z}^{2} &\le C_{\mscr{L}_{\f,\u}} \|\uone-\utwo\|_{0} \|\psi_h(\uone)-\psi_h(\utwo)\|_{Z}, \\
			\|\psi_h(\uone)-\psi_h(\utwo)\|_{Z} &\le C_{1} \|\uone-\utwo\|_{0}.
		\end{align*}
		\indent The second estimate can be derived following a similar sequence of steps.
	\end{proof}
	{
	\begin{lemma}\label{Lemma 5.4.2.1.0}
		Let $\psi$ and $\vartheta$ be the solutions of state and co-state systems \eqref{5.2.1.7a} and \eqref{5.2.1.7b}, respectively. Then, for $k\ge2$, we have the following estimates for $\star:=\min\{k-2,1\}$:
			\begin{align}
				\label{5.4.2.3.0a}	h^{\tilde{s}} \|\psi - \psi_h(\u)\|_{Z} + \|\psi - \psi_h(\u)\|_{1} &\lesssim h^{\min\{s,k-1\}+\tilde{s}}\big(\|\psi\|_{2+s} + {|\f+\u|_{\star,h} \big)},\\
			\label{5.4.2.3.0b} h^{\tilde{s}} \|\vartheta - \vartheta_h(\psi)\|_{Z} + \|\vartheta - \vartheta_h(\psi)\|_{1} &\lesssim h^{\min\{s,k-1\}+\tilde{s}}\big(\|\vartheta\|_{2+s} + |\mathrm{\bf curl}(\psi-\psi_d)|_{\star,h}\big),
			\end{align}
		where  $\psi_h(\u)$ and $\vartheta_h(\psi)$ are the auxiliary variables satisfying \eqref{5.4.2.1} (for $\uone = \u$) and \eqref{5.4.2.2} (for $\psi_{1} = \psi$), respectively,
		with $s\in(1/2, 2]$ as in Lemma~\ref{addReg} and Remark~\ref{remark:aditional:regu2}, and $\tilde{s}\in(1/2, 1]$
		satisfies \eqref{ineq:reg:add:aux}.
	\end{lemma}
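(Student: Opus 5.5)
We argue separately for the $Z$-norm estimate, via Strang-type coercivity arguments, and for the $H^1$-norm estimate, via an Aubin--Nitsche duality argument. The inequality \eqref{ineq:reg:add:aux} is taken to be the $H^{2+\tilde s}$ regularity of the auxiliary dual problem with an $H^{-1}$-type right-hand side. We give the details for \eqref{5.4.2.3.0a}. The co-state estimate \eqref{5.4.2.3.0b} follows verbatim, replacing $\mscr{B}$ by $-\mscr{B}$, the load $\f+\u$ by $\curl(\psi-\psi_d)$, and $\mscr{O}_s$ by $\mscr{O}_a$. The skew-symmetry of $\mscr{B}^h$ means that coercivity \eqref{5.3.2.7g} holds for both forms.

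\emph{Step 1 ($Z$-norm).} Let $\psi_I\in Z_h^k$ be the interpolant from Proposition~\ref{VEM-approx}, and set $\delta_h:=\psi_h(\u)-\psi_I$. By \eqref{5.3.2.7g},
\[
\alpha_0\|\delta_h\|_Z^2\le \mscr{O}_s^h(\psi_h(\u),\delta_h)-\mscr{O}_s^h(\psi_I,\delta_h)
=\big[\mscr{L}^h_{\f,\u}(\delta_h)-\mscr{L}_{\f,\u}(\delta_h)\big]+\big[\mscr{O}_s(\psi,\delta_h)-\mscr{O}_s^h(\psi_I,\delta_h)\big].
\]
We split the second bracket elementwise by inserting $\psi_\pi\in\mathbb P_k(K)$ from Lemma~\ref{Lemma 5.4.2.2.2}.
\begin{itemize}
\item For $\mscr{A}$, consistency (Theorem~\ref{Theorem 5.3.2.1}) gives the bound $h^{\min\{s,k-1\}}\|\psi\|_{2+s}$.
\item For $\mscr{C}$, we use the $L^2$-orthogonality of $\boldsymbol\Pi^{k-1}_K$ and the stabilization bound of Theorem~\ref{thrm:SNabla:stab}.
\item For $\mscr{B}$, we write $\mscr{B}-\mscr{B}^h$ by means of Lemmas~\ref{lemma:B1}--\ref{lemma:B2} (extended to $\psi$). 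Each term is a product of two projection errors, and the factors are bounded with Lemma~\ref{lemma:approx_vector_matrix}.
\item The load term equals $\sum_K((\textbf I-\boldsymbol\Pi^{k-1}_K)(\f+\u),(\textbf I-\boldsymbol\Pi^{k-1}_K)\curl\delta_h)_K$. It is bounded by $h^{1+\star}|\f+\u|_{\star,h}\|\delta_h\|_Z$.
\end{itemize}
A triangle inequality then yields $\|\psi-\psi_h(\u)\|_Z\lesssim h^{\min\{s,k-1\}}(\cdots)$, which is the first part of \eqref{5.4.2.3.0a}.

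\emph{Step 2 ($H^1$-norm by duality).} Let $e:=\psi-\psi_h(\u)$, and let $\chi\in Z$ solve the adjoint problem $\mscr{O}_s(\phi,\chi)=(\curl e,\curl\phi)$ for all $\phi\in Z$. By \eqref{ineq:reg:add:aux}, $\|\chi\|_{2+\tilde s}\lesssim|e|_1$. Take $\chi_I$ to be its interpolant. Then
\[
|e|_1^2=\mscr{O}_s(e,\chi-\chi_I)+\big[\mscr{O}_s(\psi,\chi_I)-\mscr{O}_s^h(\psi_h(\u),\chi_I)\big]+\big[\mscr{O}^h_s(\psi_h(\u),\chi_I)-\mscr{O}_s(\psi_h(\u),\chi_I)\big].
\]
The first term is bounded by $\|e\|_Z h^{\tilde s}|e|_1$. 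The remaining terms are consistency errors, bounded by $\big(\mscr{L}_{\f,\u}-\mscr{L}^h_{\f,\u}\big)(\chi_I)$ together with the $\mscr{A},\mscr{B},\mscr{C}$ differences tested against $\chi_I$. In each we subtract the analogous quantity with $\chi$ replaced by a piecewise polynomial $\chi_\pi$, or exploit that $(\textbf I-\boldsymbol\Pi)$ falls on both factors. This produces an extra factor $h^{\tilde s}$ from the regularity of $\chi$, for example $\|(\textbf I-\boldsymbol\Pi^{k-1}_K)\curl\chi_I\|_{0,K}\lesssim h^{1+\tilde s}_K\|\chi\|_{2+\tilde s,K}$. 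Dividing by $|e|_1$ and using Step 1 gives the second part of \eqref{5.4.2.3.0a}.

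\emph{Main obstacle.} The difficult point is the convective consistency term $\mscr{B}(\psi_h(\u),\chi_I)-\mscr{B}^h(\psi_h(\u),\chi_I)$ in Step 2. Naively it yields only $h^{\min\{s,k-1\}}$, because $\nabla\curl\psi_h$ is merely $L^2$. We intend to use the specific structure of Lemmas~\ref{lemma:B1}--\ref{lemma:B2}: every term contains $(\textbf I-\boldsymbol\Pi^{k-2}_{K,\text{mat}})(\nabla\curl\psi_h)$, which we rewrite as the same operator applied to $\nabla\curl(\psi_h-\psi)$ plus $(\textbf I-\boldsymbol\Pi^{k-2}_{K,\text{mat}})\nabla\curl\psi$. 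Each of these is multiplied by a projection error on $\chi_I$ or on $\curl\chi_I\,\bbeta^T$, and that second factor gains $h^{\tilde s}$. Here $\bbeta\in\boldsymbol W^{1,\infty}$ is used to control the projection error of the product. For $k=2$ the matrix projection is onto constants, so this factor gains only $h^{\min\{\tilde s,1\}}$. This is exactly the source of the restriction $\star=\min\{k-2,1\}$ and of the rate $h^{\min\{s,k-1\}+\tilde s}$.
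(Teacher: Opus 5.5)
Your proposal is correct and follows the paper's proof essentially step for step. It uses the same Strang-type argument with $\delta_h=\psi_h(\u)-\psi_I$, split into load, $\mscr{A}$, $\mscr{B}$ (via Lemmas~\ref{lemma:B1}--\ref{lemma:B2}) and $\mscr{C}$ terms, and then the same duality argument, whose three pieces $\mscr{O}_s(e,\chi-\chi_I)$, the load consistency error at $\chi_I$, and $(\mscr{O}^h_s-\mscr{O}_s)(\psi_h(\u),\chi_I)$ are exactly the paper's $T_1$, $T_2$, $T_3$.

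Two small corrections to your last paragraph. First, in the skew half $\widetilde{\mscr{B}}^{h,K}(\chi_I,\psi_h(\u))-\widetilde{\mscr{B}}^{K}(\chi_I,\psi_h(\u))$ the matrix projection acts on $\nabla\curl\chi_I$, not on $\nabla\curl\psi_h(\u)$. There the extra power comes from $(\textbf{I}-\boldsymbol{\Pi}^{k-2}_{K,\text{mat}})\nabla\curl\chi_I=O(h^{\tilde s})$, or from $(\textbf{I}-\boldsymbol{\Pi}^{k-1}_{K})\curl\psi_h(\u)=O(h^{1+\min\{s,k-1\}})$, which you get by splitting off $\psi$ and using Step 1. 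Second, the index $\star$ comes from the load error $\|(\textbf{I}-\boldsymbol{\Pi}^{k-1}_K)(\f+\u)\|_{0,K}\lesssim h_K^{\star}|\f+\u|_{\star,K}$, not from the convective term.
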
}
	\begin{proof}
   In what follows, we establish the error estimates \eqref{5.4.2.3.0a} for the variable $\psi$.  The second estimate \eqref{5.4.2.3.0b} for the co-state follows by using similar arguments.

We divide the proof into two steps. The first consists of deriving error bounds in the $Z$-norm, while the second is devoted to establishing the $H^1$-estimate.
   
\paragraph{Estimate in $Z$-norm.} 	Let $\psi_I \in Z_h^k$ denote the interpolant of $\psi \in H^{2+s}(\Omega)$ satisfying Proposition~\ref{VEM-approx}, and define $\delta_h := \psi_h(\u) - \psi_I$. Then, we have
				\begin{align}\label{ineq:trian}
					\|\psi-\psi_{h}(\u)\|_{Z} \le \|\psi-\psi_{I}\|_{Z} + \|\delta_h\|_{Z}.
				\end{align}
				\indent By an application of \eqref{5.3.2.7g}, continuous and discrete systems \eqref{5.2.1.7a} and \eqref{5.4.2.1}, respectively, for $\psi_{\pi} \in \mathbb{P}_{k}(K)$ we obtain
				\begin{align*}
					\alpha_{0} \|\delta_h\|_{Z}^{2} &\le {\mscr{O}_{s}^h(\delta_h,\delta_h)} = {\mscr{O}_{s}^h(\psi_h(\u),\delta_h)} - {\mscr{O}_{s}^h(\psi_I,\delta_h)} 
					= \mscr{L}_{\f,\u}^{h}(\delta_h) - {\mscr{O}_{s}^h(\psi_I,\delta_h)} \\
					&= \mscr{L}_{\f,\u}^{h}(\delta_h) - \mscr{L}_{\f,\u}(\delta_h) + {\mscr{O}_{s}(\psi,\delta_h)} - {\mscr{O}_{s}^h(\psi_I,\delta_h)}\\
					&= \mscr{L}_{\f,\u}^{h}(\delta_h) - \mscr{L}_{\f,\u}(\delta_h) + \sum_{K \in \mscr{T}_{h}} \big[\mscr{A}^{K}(\psi-\psi_{\pi},\delta_h) - \mscr{A}^{h,K}(\psi_I-\psi_{\pi},\delta_h) \big] \\
					& \ \ \ + \sum_{K \in \mscr{T}_{h}} \big[\mscr{B}^{K}(\psi,\delta_h) - \mscr{B}^{h,K}(\psi_I,\delta_h)\big] + \sum_{K \in \mscr{T}_{h}} \big[\mscr{C}^{K}(\psi,\delta_h) - \mscr{C}^{h,K}(\psi_I,\delta_h)\big]\\
					&:= T_{\mscr{L}_{\f,\u}} + \sum_{K \in \mscr{T}_{h}} \big(T_{\mscr{A}} + T_{\mscr{B}} + T_{\mscr{C}}\big).
				\end{align*}
	\indent By following Proposition~8.4 and Lemma~8.10 from Ref.~\refcite{MORAOSEEN}, $T_{\mscr{L}_{\f,\u}}$ and $T_{\mscr{C}}$ are bounded as
				\begin{align*}
					T_{\mscr{L}_{\f,\u}} & \lesssim \big\|\mscr{L}_{\f,\u}-\mscr{L}_{\f,\u}^{h}\big\| \|\delta_h\|_{Z},\\
					T_{\mscr{C}} &\lesssim|\psi-\psi_I|_{2,K}+|\psi-\psi_{\pi}|_{1,K}+h_{K}^{\min\{s,k-1\}+1}\|\psi\|_{2+s,K}.
				\end{align*}
				\indent For $T_{\mscr{B}}$, we add subtract $\mscr{B}^{K}(\psi_I,\delta_h)$ to obtain
\begin{equation}\label{term:TB}
              \begin{split}
			T_{\mscr{B}} &= \mscr{B}^{K}(\psi,\delta_h) - \mscr{B}^{h,K}(\psi_I,\delta_h) \\
			   &= \mscr{B}^{K}(\psi - \psi_I,\delta_h) + \big(\mscr{B}^{K}(\psi_I,\delta_h) - \mscr{B}^{h,K}(\psi_I,\delta_h)\big)\\
             &= \mscr{B}^{K}(\psi - \psi_I,\delta_h) + \frac{1}{2} \Big(\widetilde{\mscr{B}}^{K}(\psi_I,\delta_h) - \widetilde{\mscr{B}}^{h,K}(\psi_I,\delta_h)\Big)  + \frac{1}{2} \Big( \widetilde{\mscr{B}}^{h,K}(\delta_h,\psi_I) - \widetilde{\mscr{B}}^{K}(\delta_h,\psi_I)\Big)\\
             & \leq C_{\bbeta}\|\psi - \psi_I\|_{2,K} \|\delta_h\|_{2,K}  + \frac{1}{2}T_{\mscr{B}_1}+  \frac{1}{2} T_{\mscr{B}_2}.
                \end{split}
				\end{equation} 
Now by taking $\psi_h = \psi_I $ and $\phi_h = \delta_h$ in Lemma~\ref{lemma:B1}  and applying the Cauchy-Schwarz inequality, we have 
\begin{equation*}
\begin{split}
T_{\mscr{B}_1} &\leq
  \big \|  \big( \textbf{I}-  \boldsymbol{\Pi}_{K,\text{mat}}^{k-2}\big) ( \nabla   \curl(\psi_I)) \big \|_{K} \big \|   \big( \textbf{I}-  \boldsymbol{\Pi}_{K,\text{mat}}^{k-2}  \big)(\curl (\delta_h) \bbeta^T)  \big \|_{K}    \\
&  \quad + 
  \big \|\big(\big( \boldsymbol{\Pi}_{K,\text{mat}}^{k-2}-\textbf{I}\big)  (\nabla \curl(\psi_I)) \big)  \bbeta \big \|_{K} \big \|  \big( \textbf{I}-  \boldsymbol{\Pi}_{K}^{k-1}  \big)\curl(\delta_h) \big \|_{K}    \\
 & \quad + \big \| \big(\textbf{I} - \boldsymbol{\Pi}_{K}^{k-2}\big)  (\nabla  \curl (\psi_I) \bbeta)\big \|_{K} \big \|  \big( \textbf{I}-  \boldsymbol{\Pi}_{K}^{k-1}  \big)\curl (\delta_h) \big \|_{K},
\end{split}
\end{equation*}

First, from Lemma~\ref{lemma:approx_vector_matrix} we observe that 
\begin{equation*}
\begin{split}
& \big \|  \big( \textbf{I}-  \boldsymbol{\Pi}_{K,\text{mat}}^{k-2}\big) ( \nabla   \curl(\psi_I)) \big \|_{K}\leq  
\big \| \nabla \curl(\psi_I-   \psi) \big \|_{K} \\
&+ \big \|  \big( \textbf{I}-  \boldsymbol{\Pi}_{K,\text{mat}}^{k-2}\big) ( \nabla   \curl(\psi)) \big \|_{K} + \big \| \boldsymbol{\Pi}_{K,\text{mat}}^{k-2} ( \nabla   \curl(\psi-\psi_I)) \big \|_{K}\\
 & \lesssim
 \big \|  \big( \textbf{I}-  \boldsymbol{\Pi}_{K,\text{mat}}^{k-2}\big) ( \nabla   \curl(\psi)) \big \|_{K} +
\|\psi-\psi_I  \|_{2,K}\\
 & \lesssim h_{K}^{\min\{s,k-1\}} \|\psi\|_{2+s,K}.
\end{split}
\end{equation*}

Analogously, we obtain
\begin{equation*}
\begin{split}
& \big \| \big( \textbf{I}-  \boldsymbol{\Pi}_{K}^{k-2}\big) ( (\nabla \curl(\psi_I))\bbeta) \big \|_{K}
\lesssim \| \nabla \curl(\psi_I-   \psi) \bbeta \big \|_{K} \\
&\quad +  \big \|  \big( \textbf{I}-  \boldsymbol{\Pi}_{K,\text{mat}}^{k-2}\big) (\nabla   \curl(\psi))\bbeta \big \|_{K} +
\big \| \boldsymbol{\Pi}_{K}^{k-2} ( \nabla \curl(\psi-\psi_I) \bbeta) \big \|_{K}\\
& \lesssim\big \|  \big( \textbf{I}-  \boldsymbol{\Pi}_{K,\text{mat}}^{k-2}\big) (\nabla   \curl(\psi))\bbeta \big \|_{K} + \|\psi-\psi_I  \|_{2,K}\\
 & \lesssim h_{K}^{\min\{s,k-1\}} \|\psi\|_{2+s,K},
\end{split}
\end{equation*}
and 
\begin{equation*}
\begin{split}
    \big \|\big(\big( \boldsymbol{\Pi}_{K,\text{mat}}^{k-2}-\textbf{I}\big)  (\nabla \curl(\psi_I)) \big)  \bbeta \big \|_{K} &\lesssim \big \|  \big( \textbf{I}-  \boldsymbol{\Pi}_{K}^{k-2}\big) ( \nabla \curl(\psi_I)) \big \|_{K} \lesssim h_{K}^{\min\{s,k-1\}} \|\psi\|_{2+s,K}. 
\end{split}
\end{equation*}

Moreover, 
\begin{equation*}
\begin{split}
\big \|   \big( \textbf{I}-  \boldsymbol{\Pi}_{K,\text{mat}}^{k-2}  \big)(\curl (\delta_h) \bbeta^T)  \big \|_{2,K}   
&\lesssim h_{K} \|\curl(\delta_h)\|_{1,K},\\
\big \|\big( \textbf{I}- \boldsymbol{\Pi}_{K}^{k-2}  \big)(\curl (\delta_h))  \big \|_{K}   
&\lesssim h_{K} \|\curl(\delta_h)\|_{1,K}.\\
\end{split}
\end{equation*}
Thus, 
\begin{equation}\label{bound:TB1}
T_{\mscr{B}_1} \lesssim h_{K}^{\min\{s,k-1\}+1} \|\psi\|_{2+s,K}\|\delta_h\|_{2,K}.
\end{equation}

On the other hand, taking $\psi_h = \psi_I$ and $\phi_h = \delta_h$ in Lemma~\ref{lemma:B2},   applying the Cauchy-Schwarz inequality, and approximation properties, we obtain  
\begin{equation*}
\begin{split}
T_{\mscr{B}_2} &\leq \big\|\boldsymbol{\Pi}_{K,\text{mat}}^{k-2}  \nabla   \curl(\delta_h)\big\|_{K}  \big\|  \big(   \boldsymbol{\Pi}_{K}^{k-1}- \textbf{I}  \big)(\curl (\psi_I))  \big\|_{K} \\
&  \: + 
  \big\|  \big( \boldsymbol{\Pi}_{K,\text{mat}}^{k-2}-\textbf{I}\big)(\nabla \curl(\delta_h))\big\|_{K} \big\|  \big(   \boldsymbol{\Pi}_{K,\text{mat}}^{k-2} - \textbf{I} \big)(\curl (\psi_I)  \bbeta^T) \big \|_{K}.
\end{split}
\end{equation*}

Similarly, by using the approximation properties of projection $\boldsymbol{\Pi}_{K}^{k-1}$, we have 
\begin{equation*}
\begin{split}
& \big\|  \big(   \boldsymbol{\Pi}_{K}^{k-1}- \textbf{I}  \big)\curl (\psi_I)  \big\|_{K}\leq  
\big \| \curl(\psi_I-   \psi) \big \|_{K} \\
&+ \big \|  \big( \textbf{I}-  \boldsymbol{\Pi}_{K}^{k-1}\big)   \curl(\psi) \big \|_{K} + \big \| \boldsymbol{\Pi}_{K}^{k-1}   \curl(\psi-\psi_I) \big \|_{K}\\
 &  \lesssim \big \|  \big( \textbf{I}-  \boldsymbol{\Pi}_{K}^{k-1}\big)   \curl(\psi)\big \|_{K} +
|\psi-\psi_I  |_{1,K} \leq  h_{K}^{\min\{s,k-1\}+1} \|\psi\|_{2+s,K}.
\end{split}
\end{equation*}
and 
\begin{equation*}
\big\| \big(\boldsymbol{\Pi}_{K,\text{mat}}^{k-2} - \textbf{I} \big)(\curl \psi_I  \bbeta^T) \big \|_{K}   \lesssim \Big(h_{K}^{\min\{s,k-1\}+1}+ h_{K}^{\min\{s,k-1\}}\Big)  \|\psi\|_{2+s,K}.
\end{equation*}
Moreover, 
\begin{equation*}
\begin{split}
\big\|\boldsymbol{\Pi}_{K,\text{mat}}^{k-2} \nabla \curl(\delta_h)\big\|_{K}   
\lesssim \|\delta_h\|_{2,K},\qquad \big \|   \big(  \boldsymbol{\Pi}_{K,\text{mat}}^{k-2} -\textbf{I}\big)(\nabla \curl (\delta_h))  \big \|_{K}   \lesssim  \|\delta_h\|_{2,K}.
\end{split}
\end{equation*}

Therefore 
\begin{equation}\label{bound:TB2}
\begin{split}
T_{\mscr{B}_2} (\delta_h,\psi_I)  \lesssim (h_{K}^{\min\{s,k-1\}}+ h_{K}^{\min\{s,k-1\}+1}) \|\psi\|_{2+s,K}\|\delta_h\|_{2,K}. 
\end{split}
\end{equation}

Finally, by combining the bounds \eqref{term:TB}, \eqref{bound:TB1}, and \eqref{bound:TB2}, we have  
\begin{equation*}
\begin{split}
\sum_{K \in \mscr{T}_h}T_{\mscr{B}} &\leq C_{\mscr{B}} \|\psi- \psi_I\|_{Z} \|\delta_h\|_{Z} + C h^{\min\{s,k-1\}+1} \|\psi\|_{2+s}\|\delta_h\|_{Z} \\
& \quad + C \Big(h^{\min\{s,k-1\}}+ h^{\min\{s,k-1\}+1}\Big) \|\psi\|_{2+s}\|\delta_h\|_{Z}\\
& \lesssim \Big(h^{\min\{s,k-1\}}+ h^{\min\{s,k-1\}+1} \Big) \|\psi\|_{2+s}\|\delta_h\|_{Z}.
\end{split}
\end{equation*}

\indent Now, using continuity of the bilinear forms $\mscr{A}^{K}$ and $\mscr{A}^{h,K}$, we obtain
\begin{align*}
T_{\mscr{A}} &= \mscr{A}^{K}(\psi-\psi_{\pi},\delta_h) - \mscr{A}^{h,K}(\psi_I-\psi_{\pi},\delta_h) 
 \lesssim \big(|\psi-\psi_{\pi}|_{2,K} + |\psi_I-\psi_{\pi}|_{2,K} \big)|\delta_h|_{2,K}.
				\end{align*}
				\indent Combining the estimates for all the terms, we have
				\begin{align*}
					\hat{\alpha} \|\delta_h\|_{Z} &\lesssim \big\|\mscr{L}_{\f,\u}-\mscr{L}_{\f,\u}^{h}\big\| + \|\psi-\psi_{I}\|_{Z} + |\psi-\psi_{\pi}|_{1,h} \\
					& \qquad \quad + |\psi-\psi_{\pi}|_{2,h} + h^{\min\{s,k-1\}}\|\psi\|_{2+s}.
				\end{align*}
  The desired estimate follows from the above bound, \eqref{ineq:trian}, Proposition~\ref{VEM-approx} and Lemma~\ref{Lemma 5.4.2.2.2}.               
  
\paragraph{Estimate in $H^{1}$-norm.} Now, let $\phi \in Z$ be the solution of the following auxiliary variational problem: find $\phi \in Z$ such that
\begin{align}\label{5.4.2.3.02}
\mscr{O}_{s}(v,\phi) = \big(\nabla (\psi - \psi_h(\u)), \nabla v\big) \qquad \forall v \in Z.
\end{align}
By an application of Theorem~\ref{Theorem 5.2.1.3}, there exists $\tilde{s} \in (1/2,1]$ and a positive constant $C$, such that $\phi \in H^{2+\tilde{s}}(\Omega)$ and
\begin{align}\label{ineq:reg:add:aux}
\|\phi\|_{2+\tilde{s}} \lesssim |\psi - \psi_h(\u)|_{1}.
\end{align}

By taking $v := (\psi - \psi_h(\u)) \in Z$ as a test function in \eqref{5.4.2.3.02}, we obtain
\begin{equation}\label{5.4.2.3.04}
\begin{split}
|\psi - \psi_h(\u)|_{1}^{2} &\le \mscr{O}_{s}(\psi - \psi_h(\u),\phi) \\
 &= \mscr{O}_{s}(\psi - \psi_h(\u),\phi - \phi_{I})  + \mscr{O}_{s}(\psi - \psi_h(\u),\phi_{I}) \\
 &= \mscr{O}_{s}(\psi - \psi_h(\u),\phi - \phi_{I})  + \mscr{O}_{s}(\psi,\phi_{I}) -  \mscr{O}_{s}(\psi_h(\u),\phi_{I}) \\
&= \mscr{O}_{s}(\psi - \psi_h(\u),\phi - \phi_{I})  + \big[\mscr{L}_{\f,\u}(\phi_{I}) - \mscr{L}_{\f,\u}^{h}(\phi_{I})\big] \\
&\quad + \big[\mscr{O}_{s}^h(\psi_h(\u),\phi_{I}) - \mscr{O}_{s}(\psi_h(\u),\phi_{I})\big] := T_1 + T_2 +T_3.    
\end{split}
\end{equation}
\indent Firstly, we bound the term $T_1$ by using the Cauchy-Schwarz inequality, the $H^2$ estimate and \eqref{ineq:reg:add:aux}, as follows:
\begin{align*}
T_{1} 
&\lesssim \|\psi - \psi_h(\u)\|_{Z} \|\phi - \phi_{I}\|_{Z} \\
&\lesssim h^{\min\{s,k-1\} + \tilde{s}} \big[\|\psi\|_{2+s} + |\f+\u|_{\star,h} \big] |\psi - \psi_h(\u)|_{1}.
\end{align*}

 Next, we bound the second term $T_2$ using Cauchy–Schwarz inequality as follows:
\begin{align*}
T_{2} 
\lesssim h^{\min\{s,k-1\} + \tilde{s}} \big[\|\psi\|_{2+s} + |\f+\u|_{\star,h} \big] |\psi - \psi_h(\u)|_{1}.
\end{align*}

Now, consider the remaining term $T_3$:
\begin{equation}\label{termT3:H1}	
\begin{split}
    T_3 &= \sum_{K \in \mscr{T}_{h}} \big[\mscr{A}^{h,K}(\psi_h(\u),\phi_{I}) - \mscr{A}^{K}(\psi_h(\u),\phi_{I})\big] + \sum_{K \in \mscr{T}_{h}} \big[\mscr{B}^{h,K}(\psi_h(\u),\phi_{I}) - \mscr{B}^{K}(\psi_h(\u),\phi_{I})\big] \\
		 & \quad \ + \sum_{K \in \mscr{T}_{h}} \big[\mscr{C}^{h,K}(\psi_h(\u),\phi_{I}) - \mscr{C}^{K}(\psi_h(\u),\phi_{I})\big] \\
&: = T_{31} + T_{32} + T_{33}.
\end{split}
\end{equation}
\indent  We will analyze in detail the term $T_{32}$. Indeed, by using the definition of the bilinear forms $\mscr{B}^{h,K}(\cdot,\cdot)$ and $\mscr{B}^{K}(\cdot,\cdot)$,  have that
\begin{align*}
T_{32} 
&= \sum_{K \in \mscr{T}_h} \frac{1}{2}\big(\widetilde{\mscr{B}}^{h,K}(\psi_h(\u),\phi_I) - \widetilde{\mscr{B}}^{K}(\psi_h(\u),\phi_I)\big)\\
&\quad + \sum_{K \in \mscr{T}_h} \frac{1}{2}\big( \widetilde{\mscr{B}}^{h,K}(\phi_I,\psi_h(\u)) - \widetilde{\mscr{B}}^{K}(\phi_I,\psi_h(\u)).
\end{align*}
For the first term, we take $\varphi_h = \psi_h(\u)$ and $\phi_h = \phi_I$ in Lemma~\ref{lemma:B1}, and by using the same arguments as those used to obtain \eqref{bound:TB1}, we can derive
\begin{equation}\label{term:TB1:H1}
\begin{split}
\sum_{K \in \mscr{T}_h}& \big(\widetilde{\mscr{B}}^{h,K}(\psi_h(\u),\phi_I) - \widetilde{\mscr{B}}^{K}(\psi_h(\u),\phi_I)\big)
 \leq C_{\bbeta} h^{\min\{s,k-1\}+\tilde{s}}(\|\psi\|_{2+s} +|\f+\u|_{\star,h})\|\phi_I\|_{Z},
\end{split}
\end{equation}
where we have also applied the $H^2$-estimate.

Analogously, for the second term, we take $\varphi_h = \phi_I$ and $\phi_h= \psi_h(\u)$ in Lemma~\ref{lemma:B2}. Then,  by employing  the same arguments as those used to derive \eqref{bound:TB2}, and observing that
\begin{equation*}
\begin{split}
\big \|   \big(  \boldsymbol{\Pi}_{K,\text{mat}}^{k-2} -\textbf{I}\big)(\nabla \curl (\phi_I))  \big \|_{K}  &\leq   \big \| \boldsymbol{\Pi}_{K,\text{mat}}^{k-2} ( \nabla   \curl(\phi_I-\phi)) \big \|_{K}
 \\
&\quad  + \big \|  \big( \textbf{I}-  \boldsymbol{\Pi}_{K,\text{mat}}^{k-2}\big) ( \nabla   \curl(\phi)) \big \|_{K} + \big \| \nabla \curl(\phi -  \phi_I) \big \|_{K}\\
 & \lesssim \big \|  \big( \textbf{I}-  \boldsymbol{\Pi}_{K,\text{mat}}^{k-2}\big) ( \nabla   \curl(\phi)) \big \|_{K} +
\|\phi-\phi_I  \|_{2,K}\\
 & \lesssim h_{K}^{\tilde{s}} \|\phi\|_{2+\tilde{s},K}.
\end{split}
\end{equation*}
We can obtain the extra power for $h$  as follows
\begin{equation}\label{term:TB2:H1}
\begin{split}
\sum_{K \in \mscr{T}_h} &\big(\widetilde{\mscr{B}}^{h,K}(\phi_I,\psi_h(\u)) - \widetilde{\mscr{B}}^{K}(\phi_I,\psi_h(\u))\big) 
 \leq C_{\bbeta} h^{\min\{s,k-1\}+\tilde{s}}(\|\psi\|_{2+s,\Omega} + |\f +\u|_{\star,h}) \|\phi\|_{2+\tilde{s}}.
\end{split}
\end{equation}
By combining the bounds \eqref{termT3:H1}, \eqref{term:TB1:H1}, \eqref{term:TB2:H1} and using  standard arguments (cf.~\cite{MORABRINKMAN,BM}) along with \eqref{ineq:reg:add:aux}, we have
\begin{align*}
T_{3}  \leq  C_{\bbeta} h^{\min\{s,k-1\}+\tilde{s}}(\|\psi\|_{2+s,\Omega} + |\f +\u|_{\star,h})|\psi_h(\u)-\psi|_{1}.
\end{align*}

The desired result is obtained by inserting the above bound, and the estimates for $T_1$, $T_2$  in \eqref{5.4.2.3.04}. 
	\end{proof}
	\indent Now, we derive the error estimates for the optimal control and the state and co-state variables.
	\begin{theorem}\label{Theorem 5.4.2.2}
		{Let ${k\ge2}$ and the mesh regularity assumptions $\boldsymbol{\mathcal{A}1}$ and $\boldsymbol{\mathcal{A}2}$ hold}. Let $(\psi, \vartheta, \u) \in Z \times Z \times \mscr{U}_{\boldsymbol{ad}}$ be a solution of the optimality system \eqref{5.2.1.7}. {Let $(\psi_h, \vartheta_h, \uh) \in Z_h^{{k}} \times Z_h^{{k}} \times \mscr{U}_{\boldsymbol{ad}}^{\boldsymbol{h}}$ be the discrete solution of system \eqref{5.3.3.1} under a \textit{variational discretization} of control}. Then, the following bound holds:
		{
		\begin{equation*}
			\|\u-\uh\|_{0} \lesssim h^{{\min\{s,k-1\}+\tilde{s}}},
		\end{equation*}
		where $s\in(1/2, 2]$ as in Lemma~\ref{addReg} and Remark~\ref{remark:aditional:regu2}, and $\tilde{s}\in(1/2, 1]$
		satisfies \eqref{ineq:reg:add:aux}.}
	\end{theorem}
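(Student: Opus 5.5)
The plan is the standard variational-discretization argument: test the continuous and discrete variational inequalities against each other and use the $\lambda$-coercivity of the reduced functional. This leaves only the auxiliary-problem errors of Lemma~\ref{Lemma 5.4.2.1.0}.

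\emph{Step 1 (combine the inequalities).} Since $\mscr{U}_{\boldsymbol{ad}}^{\boldsymbol{h}}=\mscr{U}_{\boldsymbol{ad}}$, we may take $\tilde{\u}=\uh$ in \eqref{5.2.1.7c} and $\tilde{\u}_{\boldsymbol h}=\u$ in \eqref{5.3.3.1c}. Adding the two inequalities gives
\begin{align*}
\lambda\|\u-\uh\|_0^2 \le \sum_{K\in\mscr{T}_h}\big(\boldsymbol{\Pi}_K^{k-1}\curl(\vartheta_h)-\curl(\vartheta),\u-\uh\big)_K .
\end{align*}
On each $K$ we write
\begin{align*}
\boldsymbol{\Pi}_K^{k-1}\curl(\vartheta_h)-\curl(\vartheta)
= \boldsymbol{\Pi}_K^{k-1}\curl\big(\vartheta_h-\vartheta_h(\psi_h(\u))\big)
+ \boldsymbol{\Pi}_K^{k-1}\curl\big(\vartheta_h(\psi_h(\u))-\vartheta_h(\psi)\big)
+ \Big(\boldsymbol{\Pi}_K^{k-1}\curl(\vartheta_h(\psi))-\curl(\vartheta)\Big).
\end{align*}
Here $\vartheta_h(\psi_h(\u))$ is the discrete co-state driven by the auxiliary state $\psi_h(\u)$ from \eqref{5.4.2.1}.

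\emph{Step 2 (the sign term).} The first piece is the delicate one. Since $\vartheta_h=\vartheta_h(\psi_h(\uh))$, the difference $\vartheta_h-\vartheta_h(\psi_h(\u))$ solves the discrete adjoint equation with data $\boldsymbol{\Pi}^{k-1}\curl(\psi_h(\uh)-\psi_h(\u))$. Likewise $\psi_h(\uh)-\psi_h(\u)$ solves the discrete state equation with data $\boldsymbol{\Pi}^{k-1}(\uh-\u)$. Testing each discrete problem with the other solution uses the transposition structure $\mscr{O}_s^h(\psi,\vartheta)=\mscr{O}_a^h(\psi,\vartheta)$, i.e.\ the same discrete forms, with $\mscr{B}^h$ skew. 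This gives
\begin{align*}
\big(\boldsymbol{\Pi}^{k-1}\curl(\vartheta_h-\vartheta_h(\psi_h(\u))),\u-\uh\big)= -\big\|\boldsymbol{\Pi}^{k-1}\curl(\psi_h(\uh)-\psi_h(\u))\big\|_0^2\le 0,
\end{align*}
so this piece can be dropped. I expect this step to need the most care. One must check that the $L^2$-projections in $\mscr{L}^h$ appear symmetrically: $\boldsymbol{\Pi}^{k-1}$ is applied to both the data and the test curl. That symmetry is what makes the two discrete problems adjoint to each other.

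\emph{Step 3 (the remaining terms).} For the second piece, Cauchy--Schwarz, the $L^2$-stability of $\boldsymbol{\Pi}^{k-1}_K$ and Lemma~\ref{Lemma 5.4.2.1} (second bound, with the data measured in $\|\curl(\cdot)\|_0$) bound it by a constant times
\begin{align*}
\|\psi_h(\u)-\psi\|_1\,\|\u-\uh\|_0 .
\end{align*}
For the third piece, we split it as $\boldsymbol{\Pi}_K^{k-1}\curl(\vartheta_h(\psi)-\vartheta)+(\boldsymbol{\Pi}_K^{k-1}-\mathbf I)\curl(\vartheta)$. The first part is bounded by $\|\vartheta-\vartheta_h(\psi)\|_1$. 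The second is bounded by Lemma~\ref{lemma:approx_vector_matrix} with $\delta=\min\{1+s,k\}$, which gives order $h^{\min\{s,k-1\}+1}\|\vartheta\|_{2+s}$; this is at least the target rate since $\tilde s\le1$.

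\emph{Step 4 (conclusion).} Dividing by $\lambda\|\u-\uh\|_0$ gives
\begin{align*}
\|\u-\uh\|_0\lesssim \|\psi-\psi_h(\u)\|_1+\|\vartheta-\vartheta_h(\psi)\|_1+h^{\min\{s,k-1\}+1}\|\vartheta\|_{2+s}.
\end{align*}
The $H^1$ estimates \eqref{5.4.2.3.0a} and \eqref{5.4.2.3.0b} of Lemma~\ref{Lemma 5.4.2.1.0} then give $h^{\min\{s,k-1\}+\tilde s}$. The data terms $|\f+\u|_{\star,h}$ and $|\curl(\psi-\psi_d)|_{\star,h}$ are bounded using $\u\in\H^1(\Omega)$ (Lemma~\ref{addReg}) and the regularity of $\psi$ and $\psi_d$.
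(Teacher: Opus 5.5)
Your proof is correct, but it takes a different route from the paper.

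\textbf{The paper's route.} The paper inserts the \emph{continuous} co-state $\vartheta(\psi(\uh))$ driven by the discrete control. It uses \eqref{5.2.1.9} in the form $\lambda\|\u-\uh\|_0^2\le\mathcal{J}_r'(\u)(\u-\uh)-\mathcal{J}_r'(\uh)(\u-\uh)$. It then bounds $\sum_K(\boldsymbol{\Pi}_K^{k-1}\curl(\vartheta_h)-\curl(\vartheta(\psi(\uh))),\u-\uh)_K$ in two parts. The first uses an estimate ``similar to'' Lemma~\ref{Lemma 5.4.2.1.0} for the whole discrete state/co-state chain with control $\uh$. The second uses the approximation property of $\boldsymbol{\Pi}_K^{k-1}$ applied to $\curl(\vartheta(\psi(\uh)))$.

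\textbf{Your route.} You compare instead with \emph{discrete} auxiliary solutions driven by the exact control $\u$. You then discard the only $\uh$-dependent piece by your sign identity. That step is legitimate here for two reasons. First, $\mscr{A}^h$ and $\mscr{C}^h$ are symmetric and $\mscr{B}^h$ is skew, so $\mscr{O}_a^h=\mscr{O}_s^h$. Second, $\boldsymbol{\Pi}^{k-1}$ acts on both factors of $\mscr{L}^h$ and matches the projection in \eqref{5.3.3.1c}.

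\textbf{What each buys.} Your route uses Lemma~\ref{Lemma 5.4.2.1.0} exactly as stated, and only for $(\psi,\vartheta,\u)$. It also uses the second bound of Lemma~\ref{Lemma 5.4.2.1} in the form its proof actually gives: data measured by $\|\curl(\psi_1-\psi_2)\|_0$, as you wrote, not $\|\psi_1-\psi_2\|_0$. No $h$-uniform regularity of $\psi(\uh)$ or $\vartheta(\psi(\uh))$ is needed.

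This matters for $s>1$. The control $\uh$ from \eqref{dpf} is in general discontinuous across edges. Then the load $(\f+\uh,\curl(\cdot))$ does not give the $L^2$ right-hand side required in Remark~\ref{remark:aditional:regu2}. So $\psi(\uh)$ need not lie in $H^{2+s}(\Omega)$, and an estimate modeled on Lemma~\ref{Lemma 5.4.2.1.0} for the $\uh$-driven problem would need exactly that.

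The paper's route, in turn, does not depend on the discrete adjoint structure. It relies only on continuous coercivity and a priori estimates. It would therefore still work for a scheme whose discrete co-state operator is not the exact transpose of the discrete state operator.

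\textbf{A minor point in your Step~4.} For $k\ge3$, the term $|\f+\u|_{1,h}$ also requires $\f$ to be piecewise $H^1$, not just $\u\in\boldsymbol{H}^1(\Omega)$.
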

	\begin{proof}
		By substituting $\tilde{\u}=\uh$ in the continuous \textit{variational inequality} \eqref{5.2.1.7c} and $\tilde{\u}_{\boldsymbol{h}}=\u$ in the discrete \textit{variational inequality} \eqref{5.3.3.1c}, we obtain:
		\begin{align*}
			\big(\curl(\vartheta) + \lambda \u,\uh-\u\big) = - \big(\curl(\vartheta) + \lambda \u,\u - \uh\big) &\ge \boldsymbol{0},\\
			\sum_{K \in \mscr{T}_{h}} \big(\boldsymbol{\Pi}_{K}^{{{k-1}}} \curl(\vartheta_h) + \lambda \uh,\u-\uh\big)_{K} &\ge \boldsymbol{0}.
	\end{align*}
	Combining the above two inequalities, we write
		\begin{align}\label{neweqq}
			\sum_{K \in \mscr{T}_{h}} \big(\boldsymbol{\Pi}_{K}^{{{k-1}}} \curl(\vartheta_h) + \lambda \uh,\u-\uh\big)_{K} \ge \boldsymbol{0} \ge \big(\curl(\vartheta) + \lambda \u,\u-\uh\big).
		\end{align}
		
		{Now, using the coercivity condition \eqref{5.2.1.9} for $\u - \uh \in \mscr{U}_{\boldsymbol{ad}}$, applying the mean value theorem and using \eqref{neweqq}, we obtain}
		\begin{align*}
			\lambda \|\u-\uh\|_{0}^{2} & \le \mathcal{J}_r'(\u)(\u-\uh) - \mathcal{J}_r'(\uh)(\u-\uh)\\
			&= (\curl(\vartheta) + \lambda \u, \u - \uh) - (\curl(\vartheta(\psi(\uh))) + \lambda \uh, \u - \uh)\\
			&\le \sum_{K \in \mscr{T}_{h}} \big(\boldsymbol{\Pi}_{K}^{{{k-1}}} \curl(\vartheta_h) - \curl(\vartheta(\psi(\uh))),\u-\uh\big)_{K},
		\end{align*}
		where $\vartheta(\psi(\uh))$ is a solution of the co-state problem \eqref{5.2.1.6} when $\psi$ is replaced by $\psi(\uh)$ with $\psi(\uh)$ being a solution of \eqref{5.2.1.2} for $\u=\uh$. Adding and subtracting $\boldsymbol{\Pi}_{K}^{{{k-1}}} \curl(\vartheta(\psi(\uh)))$ in the first component, we obtain
		\begin{equation}
			\begin{split}
		&\lambda \|\u-\uh\|_{0}^{2} \le \sum_{K \in \mscr{T}_{h}} \big(\boldsymbol{\Pi}_{K}^{{{k-1}}}\big( \curl(\vartheta_h) - \curl(\vartheta(\psi(\uh)))\big),\u-\uh\big)_{K} \\ 
		 & \quad  + \sum_{K \in \mscr{T}_{h}} \big(\boldsymbol{\Pi}_{K}^{{{k-1}}} \curl(\vartheta(\psi(\uh))) - \curl(\vartheta(\psi(\uh))),\u-\uh\big)_{K}  \label{5.4.2.6}
			:= T_1 + T_2. 	
			\end{split}
		\end{equation}
		\indent The term $T_1$ is bounded using the Cauchy-Schwarz inequality, stability property of 
        $\boldsymbol{\Pi}_{K}^{{{k-1}}}$ with respect to the $L^2$-norm and an estimate similar to Lemma~\ref{Lemma 5.4.2.1.0}, as follows:
			\begin{align*}
				T_1 &\le \sum_{K \in \mscr{T}_{h}} \big\| \curl(\vartheta_h) - \curl(\vartheta(\psi(\uh)))\big\|_{0,K} \|\u-\uh\|_{0,K}
				\lesssim h^{{{\min\{s,k-1\}+\tilde{s}}}}\|\u-\uh\|_{0}.
			\end{align*}
			\indent Using Cauchy–Schwarz inequality and approximation properties of the projection operator $\boldsymbol{\Pi}_{K}^{{{k-1}}}$ in Lemma~\ref{lemma:approx_vector_matrix}, we bound the term $T_2$ as follows:
			\begin{align*}
				T_2 &\le \sum_{K \in \mscr{T}_{h}} \big\|\boldsymbol{\Pi}_{K}^{{{k-1}}} \curl(\vartheta(\psi(\uh))) - \curl(\vartheta(\psi(\uh)))\big\|_{0,K} \|\u-\uh\|_{0,K}\\
				&\le \sum_{K \in \mscr{T}_{h}} Ch_{K}^{{{\min\{s,k-1\}+\tilde{s}}}} \|\u-\uh\|_{0,K} \|\curl(\vartheta(\psi(\uh)))\|_{1+s,K}   \\
				&\lesssim h^{{{\min\{s,k-1\}+\tilde{s}}}}\|\u-\uh\|_{0}.
			\end{align*}
			We obtain the desired estimate by applying the bounds for $T_1$ and $T_2$ in \eqref{5.4.2.6}.
		\end{proof}
		\begin{theorem}\label{Theorem 5.4.2.3}
			Let $\u$ be the optimal control for the system \eqref{5.2.1.7} with the state and co-state stream-function $\psi$ and $\vartheta$, respectively. {Let $(\psi_h, \vartheta_h, \uh)$ be the discrete solution of system \eqref{5.3.3.1}. Then, the following estimates hold:
			\begin{align*}
    h^{\tilde{s}} \|\psi-\psi_h\|_{Z} + \|\psi-\psi_h\|_{1}  &\lesssim h^{\min\{s,k-1\}+\tilde{s}},\\
    h^{\tilde{s}} \|\vartheta-\vartheta_h\|_{Z} + \|\vartheta-\vartheta_h\|_{1} &\lesssim h^{\min\{s,k-1\}+\tilde{s}},
			\end{align*}
			where $s\in(1/2, 2]$ as in Lemma~\ref{addReg} and Remark~\ref{remark:aditional:regu2}, and $\tilde{s}\in(1/2, 1]$
		satisfies \eqref{ineq:reg:add:aux}.}
		\end{theorem}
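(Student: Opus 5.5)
The plan is to split each error with the triangle inequality through the auxiliary discrete solutions of \eqref{5.4.2.1} and \eqref{5.4.2.2}. For the state we write
\begin{align*}
\psi-\psi_h = \big(\psi-\psi_h(\u)\big) + \big(\psi_h(\u)-\psi_h(\uh)\big),
\end{align*}
using that $\psi_h=\psi_h(\uh)$, since the discrete state equation \eqref{5.3.3.1a} with control $\uh$ is exactly \eqref{5.4.2.1} with $\uone=\uh$.

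The first term is handled by Lemma~\ref{Lemma 5.4.2.1.0}, estimate \eqref{5.4.2.3.0a}. It gives $h^{\tilde s}\|\psi-\psi_h(\u)\|_Z+\|\psi-\psi_h(\u)\|_1\lesssim h^{\min\{s,k-1\}+\tilde s}$. The data terms $\|\psi\|_{2+s}$ and $|\f+\u|_{\star,h}$ are bounded by Lemma~\ref{addReg} and Remark~\ref{remark:aditional:regu2}; note $\u\in\H^1(\Omega)$. For the second term, Lemma~\ref{Lemma 5.4.2.1} gives $\|\psi_h(\u)-\psi_h(\uh)\|_Z\le C_1\|\u-\uh\|_0$. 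Theorem~\ref{Theorem 5.4.2.2} then bounds this by $h^{\min\{s,k-1\}+\tilde s}$. Since $|\cdot|_1\le C_p\|\cdot\|_Z$ on $Z$ (Poincaré/Friedrichs, as in Lemma~\ref{Lemma 5.3.2.2}), the same bound controls the $H^1$-norm. Multiplying by $h^{\tilde s}\le C$ in the $Z$-part only helps. Hence the state estimate follows.

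For the co-state we write
\begin{align*}
\vartheta-\vartheta_h = \big(\vartheta-\vartheta_h(\psi)\big)+\big(\vartheta_h(\psi)-\vartheta_h\big).
\end{align*}
The first piece is controlled by \eqref{5.4.2.3.0b}, with $|\curl(\psi-\psi_d)|_{\star,h}$ finite by the regularity of $\psi$ and $\psi_d\in H^1$ or $H^2$. For the second piece we identify $\vartheta_h=\vartheta_h(\psi_h)$ from \eqref{5.3.3.1b} and \eqref{5.4.2.2}. Subtracting the two co-state equations and testing with the difference gives, by \eqref{5.3.2.7g} for the adjoint form ($\mscr{B}^h$ is skew so the sign does not matter) and the boundedness of $\boldsymbol{\Pi}_K^{k-1}$ in $L^2$,
\begin{align*}
\|\vartheta_h(\psi)-\vartheta_h(\psi_h)\|_Z\lesssim \|\curl(\psi-\psi_h)\|_0\lesssim |\psi-\psi_h|_1 .
\end{align*}
Lemma~\ref{Lemma 5.4.2.1} is stated with $\|\psi_1-\psi_2\|_0$, but its proof actually yields the curl norm, and that is what we need. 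The state $H^1$-estimate just proved bounds this by $h^{\min\{s,k-1\}+\tilde s}$. The $H^1$ bound for this piece again follows by Poincaré.

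The only delicate step is the co-state perturbation. We must use the curl ($H^1$) version of the stability bound rather than an $L^2$ one, so that the optimal $H^1$ state rate, and not merely the $H^2$ rate, feeds into the co-state error. Otherwise the co-state $H^1$ estimate would lose a factor $h^{\tilde s}$. Everything else is bookkeeping with the previously established lemmas and the uniform boundedness of the regularity norms of $(\psi,\vartheta,\u)$.
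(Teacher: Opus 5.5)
Your proposal is correct and follows the paper's route. The paper proves the theorem by the triangle inequality through the auxiliary discrete solutions $\psi_h(\u)$ and $\vartheta_h(\psi)$. It uses Lemma~\ref{Lemma 5.4.2.1.0} for the consistency part and Lemma~\ref{Lemma 5.4.2.1} with Theorem~\ref{Theorem 5.4.2.2} for the perturbation part.

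Your remark on the co-state perturbation is also correct. The Cauchy--Schwarz argument in Lemma~\ref{Lemma 5.4.2.1} actually yields a bound in terms of $\|\curl(\psi_1-\psi_2)\|_0\lesssim|\psi_1-\psi_2|_1$, not $\|\psi_1-\psi_2\|_0$. So the state $H^1$ estimate is what must be fed into that step, a detail the paper leaves implicit.
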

		\begin{proof}
			Both the estimates for the state and co-state stream-function can be derived by an application of triangle inequality, Lemmas~\ref{Lemma 5.4.2.1}, \ref{Lemma 5.4.2.1.0} and Theorem~\ref{Theorem 5.4.2.2}.
		\end{proof}
		\subsection{Recovering the Velocity, Vorticity and Pressure Fields} \label{Recovering the Velocity, Vorticity and Pressure Fields}
		The solution of the VEM scheme proposed in Sec.~\ref{VEM Discrete Formulation} approximates both the stream-function and the control variable. One key advantage of employing the stream-function method for fluid flow problems is that it allows us to obtain other important quantities, such as the velocity field $\y$, the fluid pressure $p$, and the vorticity $\omega$. In this section, we will outline the techniques used to compute these three variables.
		\subsubsection{\textbf{Computation of Velocity and Vorticity}}\label{Computation of Velocity Field and Fluid Vorticity}
		Firstly, we compute the discrete velocity and vorticity which are obtained as a straightforward post-processing step from the computed stream-function, utilizing appropriate projection techniques.
		
		Since the stream-functions $\psi, \vartheta \in Z$ are unique solutions of the system \eqref{5.2.1.7}, we have
		\begin{align}\label{5.4.2.1.1}
			\y = \curl(\psi), \qquad \qquad \w = \curl(\vartheta),
		\end{align}
		where $\w$ denotes the co-state velocity field. At the discrete level, we compute the discrete velocities $\yh$ and $\wh$ as a post-processing of the computed stream-functions $\psi_h, \vartheta_h \in Z_{h}^{{{k}}}$ as follows: if $\psi_h, \vartheta_h$ are the unique solutions of system \eqref{5.3.3.1}, then 
		\begin{align}\label{5.4.2.1.2}
			\yh = \boldsymbol{\Pi}^{{{k-1}}} \curl(\psi_h), \qquad \qquad \qquad \wh = \boldsymbol{\Pi}^{{{k-1}}} \curl(\vartheta_h),
		\end{align}
		are the velocity approximations, where $\boldsymbol{\Pi}^{{{k-1}}}$ is defined in $\L^{2}(\Omega)$ as:
		\begin{align*}
			\big(\boldsymbol{\Pi}^{{{k-1}}} \v\big)|_{K} = \boldsymbol{\Pi}_{K}^{{{k-1}}}(\v|_{K}) \qquad \forall K \in \mscr{T}_{h}.
		\end{align*}
		\begin{theorem}\label{Theorem 5.4.2.1.1}
			Suppose that the hypothesis of Lemma~\ref{Lemma 5.4.2.1.0} holds true. Then, for $s \in (1/2,2]$ we have
				\begin{align*}
                    |\y-\yh|_{1,h} &\ \lesssim h^{\min\{s,k-1\}}\big[\|\psi\|_{2+s} + {|\f+\u|_{\star,h}} \big],\\
                    |\w-\wh|_{1,h} & \lesssim h^{\min\{s,k-1\}}\big[ \|\vartheta\|_{2+s} + |\mathrm{\bf curl}(\psi)- {\mathrm{\bf curl}(\psi_d)}|_{\star,h} 
					\big].
				\end{align*}
		\end{theorem}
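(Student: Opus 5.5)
The plan is an elementwise triangle inequality combined with the $H^2$-part of Lemma~\ref{Lemma 5.4.2.1.0}. We treat the state velocity; the co-state one follows identically with \eqref{5.4.2.3.0b} replacing \eqref{5.4.2.3.0a}.

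\emph{Step 1 (splitting).} On each $K\in\mscr{T}_h$, since $\yh|_K=\boldsymbol{\Pi}_K^{k-1}\curl(\psi_h)$, we insert $\boldsymbol{\Pi}_K^{k-1}\curl(\psi)$ and obtain
\begin{align*}
|\y-\yh|_{1,K} \le \big|(\mathbf{I}-\boldsymbol{\Pi}_K^{k-1})\curl(\psi)\big|_{1,K} + \big|\boldsymbol{\Pi}_K^{k-1}\curl(\psi-\psi_h)\big|_{1,K}.
\end{align*}

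\emph{Step 2 (first term).} Since $\curl(\psi)\in\boldsymbol{H}^{1+s}(K)$, Lemma~\ref{lemma:approx_vector_matrix} with $\ell=1$ and $\delta=\min\{1+s,k\}$ gives the bound $h_K^{\min\{s,k-1\}}\|\psi\|_{2+s,K}$.

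\emph{Step 3 (second term).} We use the $H^1$-stability of $\boldsymbol{\Pi}_K^{k-1}$. Estimate \eqref{3.5.1.5} is stated only for discrete functions. We therefore either redo its argument for general $H^2$ functions, or split $\psi-\psi_h=(\psi-\psi_I)+(\psi_I-\psi_h)$ and apply \eqref{3.5.1.5} to the discrete part. For the non-discrete part, we subtract the $L^2$-projection of a polynomial $q\in[\mathbb{P}_{k-1}(K)]^2$ (invariant under $\boldsymbol{\Pi}_K^{k-1}$) and combine Lemma~\ref{Lemma 5.3.1.3} with $L^2$-stability of $\boldsymbol{\Pi}_K^{k-1}$ and Lemma~\ref{Lemma 5.4.2.2.2}. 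In either case we reach $|\boldsymbol{\Pi}_K^{k-1}\curl(\psi-\psi_h)|_{1,K}\lesssim|\psi-\psi_h|_{2,K}+h_K^{\min\{s,k-1\}}\|\psi\|_{2+s,K}$.

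\emph{Step 4 (summation).} We sum the squares over $K$. The global $|\psi-\psi_h|_{2,h}=\|\psi-\psi_h\|_Z$ must then be bounded. The stated right-hand side contains only $\|\psi\|_{2+s}+|\f+\u|_{\star,h}$, which matches \eqref{5.4.2.3.0a} for the auxiliary $\psi_h(\u)$. Hence we interpret (and, if needed, prove) the estimate with $\psi_h$ replaced by $\psi_h(\u)$, or add the control contribution $\|\psi_h(\u)-\psi_h\|_Z\le C_1\|\u-\uh\|_0$ from Lemma~\ref{Lemma 5.4.2.1} together with Theorem~\ref{Theorem 5.4.2.2}, which is of higher order $h^{\min\{s,k-1\}+\tilde s}$ and is absorbed.

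The main obstacle is Step 3: justifying the $H^1$-stability of $\boldsymbol{\Pi}_K^{k-1}$ on non-virtual functions. I would handle it via the polynomial-subtraction argument above.
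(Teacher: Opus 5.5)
Your proposal is correct and follows essentially the paper's route: the paper likewise combines a triangle inequality, the stability estimate \eqref{3.5.1.5}, and the $Z$-norm bound of Theorem~\ref{Theorem 5.4.2.3}, which is itself obtained from Lemmas~\ref{Lemma 5.4.2.1} and \ref{Lemma 5.4.2.1.0} and Theorem~\ref{Theorem 5.4.2.2} exactly as in your Step~4. Your additions make explicit two points the paper leaves implicit: the extension of the $H^1$-stability of $\boldsymbol{\Pi}_K^{k-1}$ to non-discrete functions, by subtracting a piecewise constant and using Lemma~\ref{Lemma 5.3.1.3} with Poincar\'e, and the fact that the control contribution $\|\psi_h(\u)-\psi_h\|_{Z}$ is absorbed only up to a data-dependent hidden constant, just as in the paper's own bounds.
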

		\begin{proof}
			The first estimate is obtained using Theorem~8.4 in Ref.~\refcite{MORAOSEEN}, through an application of the triangle inequality, estimate \eqref{3.5.1.5}, and Theorem~\ref{Theorem 5.4.2.3}. The second estimate can be derived in a comparable manner by following analogous steps.
		\end{proof}
			\begin{theorem}\label{Theorem 5.4.2.1.1.0}
				Suppose that the hypothesis of Lemma~\ref{Lemma 5.4.2.1.0} holds true. Then, for ${k\ge2}$, $\tilde{s} \in (1/2,1]$ and $s \in (1/2,2]$, we have the following bounds:
				\begin{align*}
					\|\y-\yh\|_{0} &\lesssim h^{\min\{s,k-1\}+\tilde{s}},\qquad \qquad 
					\|\w-\wh\|_{0}  \lesssim C_{2}h^{\min\{s,k-1\}+\tilde{s}}.
				\end{align*}
		\end{theorem}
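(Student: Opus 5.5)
The plan is to split the velocity error with the triangle inequality against the $L^2$-projection of the exact velocity. Since $\y=\curl(\psi)$ and $\yh=\boldsymbol{\Pi}^{k-1}\curl(\psi_h)$, on each $K\in\mscr{T}_h$ I write
\begin{align*}
\y-\yh = \big(\curl(\psi)-\boldsymbol{\Pi}_K^{k-1}\curl(\psi)\big) + \boldsymbol{\Pi}_K^{k-1}\big(\curl(\psi)-\curl(\psi_h)\big).
\end{align*}

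The first term is a pure approximation error. I apply Lemma~\ref{lemma:approx_vector_matrix} with $\ell=0$ to $\boldsymbol{v}=\curl(\psi)\in\boldsymbol{H}^{1+s}(K)$, using the regularity of Lemma~\ref{addReg} or Remark~\ref{remark:aditional:regu2}. This gives $\|(\textbf{I}-\boldsymbol{\Pi}_K^{k-1})\curl(\psi)\|_{0,K}\lesssim h_K^{\min\{1+s,k\}}|\psi|_{2+s,K}$. Since $\tilde{s}\le 1$, the exponent satisfies $\min\{1+s,k\}=\min\{s,k-1\}+1\ge \min\{s,k-1\}+\tilde{s}$, so after summing over $K$ and using $h\lesssim 1$ this term is of the required order.

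The second term is handled by the $L^2$-stability of $\boldsymbol{\Pi}_K^{k-1}$. Summing over elements gives
\begin{align*}
\Big(\sum_{K}\|\boldsymbol{\Pi}_K^{k-1}\curl(\psi-\psi_h)\|_{0,K}^2\Big)^{1/2}\le \|\curl(\psi-\psi_h)\|_{0}=|\psi-\psi_h|_{1}.
\end{align*}
I then invoke the $H^1$-estimate of Theorem~\ref{Theorem 5.4.2.3}, which bounds $\|\psi-\psi_h\|_1\lesssim h^{\min\{s,k-1\}+\tilde{s}}$. Combining the two terms yields the bound for $\|\y-\yh\|_0$.

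The co-state velocity $\w-\wh$ follows the same steps with $\vartheta$ in place of $\psi$. I use $\vartheta\in H^{2+s}(\Omega)$ from Lemma~\ref{addReg} for the projection error and the co-state part of Theorem~\ref{Theorem 5.4.2.3} for the stability term. If one tracks constants through Lemma~\ref{Lemma 5.4.2.1}, where $\vartheta_h$ depends on $\psi_h$ rather than $\psi$, the constant $C_2$ appears. This dependence enters through Theorem~\ref{Theorem 5.4.2.3}, whose proof uses Lemma~\ref{Lemma 5.4.2.1} to compare $\vartheta_h(\psi_h)$ with $\vartheta_h(\psi)$.

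The only delicate point is that the argument relies on the full $H^1$ rate from Theorem~\ref{Theorem 5.4.2.3}, which was obtained by duality. I take that result as given. I also need the exponent comparison in the projection term, which holds because $\tilde{s}\le1$. No further obstacle is expected.
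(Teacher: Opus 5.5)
Your proposal is correct and follows the paper's own argument: the same splitting of $\y-\yh$ against $\boldsymbol{\Pi}_K^{k-1}\curl(\psi)$, Lemma~\ref{lemma:approx_vector_matrix} for the projection error, and the $L^2$-stability of $\boldsymbol{\Pi}_K^{k-1}$ combined with the $H^1$ estimate of Theorem~\ref{Theorem 5.4.2.3} for the remainder. Your elementwise sum of squares and the explicit exponent comparison $\min\{1+s,k\}=\min\{s,k-1\}+1\ge\min\{s,k-1\}+\tilde{s}$ are, if anything, more careful than the paper's sum of local norms.
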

		\begin{proof}
				Using \eqref{5.4.2.1.1}, \eqref{5.4.2.1.2}, the triangle inequality, Lemma~\ref{lemma:approx_vector_matrix}, and the $H^1$-norm estimate derived in Theorem~\ref{Theorem 5.4.2.3}, we obtain
				\begin{align*}
					&\|\y-\yh\|_{0} 
					\le \sum_{K \in \mscr{T}_h} \|\curl(\psi) - \boldsymbol{\Pi}_{K}^{{{k-1}}} \curl(\psi_h)\|_{0,K}\\
					&\le \sum_{K \in \mscr{T}_h} \Big(\|\curl(\psi) - \boldsymbol{\Pi}_{K}^{{{k-1}}} \curl(\psi)\|_{0,K} + \|\boldsymbol{\Pi}_{K}^{{{k-1}}} (\curl(\psi-\psi_h))\|_{0,K}\Big)\\
					&\lesssim \sum_{K \in \mscr{T}_h} \Big(h_{K}^{\min\{s,k-1\}+\tilde{s}} + \|\psi-\psi_h\|_{1,K}\Big)\\
					&\lesssim h^{\min\{s,k-1\}+\tilde{s}}.
				\end{align*}
			The second estimate for the co-state velocity follows by a similar approach.
		\end{proof}
		Now, we will present a strategy to recover the state and co-state fluid vorticity $\omega$ and $\theta$, which is key in several important applications in fluid mechanics (cf. Refs.~\refcite{VANAYA}, \refcite{MORAB}, \refcite{AERRN}, \refcite{HSAK2}). A use of the definition of vorticity and the relation \eqref{5.4.2.1.1} gives
		\begin{align*}
            \omega = \curl(\y) = - \Delta \psi, \qquad \qquad \theta = \curl(\w) = - \Delta \vartheta.
		\end{align*}
		\indent Next, we introduce an orthogonal $L^2$-projection $\Pi_{K}^{{{k-2}}}$ onto $\mathbb{P}_{{{k-2}}}(K)$, such that the following approximation result from Refs.~\refcite{VEMGSOC} and \refcite{BSC} is satisfied:
		\begin{align}
			\label{5.4.2.1.5b} \|v-\Pi_{K}^{{{k-2}}}v\|_{m,K} &\le \hat{C} h_{K}^{\delta-m}|v|_{\delta,K} \qquad &&\forall v\in H^{\delta}(K), \quad 0\le m \le \delta \le k-1,
		\end{align}
		where $\hat{C}$ is a positive constant independent of the mesh parameter $h_K$.
		
		\indent Now, we compute the discrete state and co-state vorticity $ \mscr{T}_{h}$ and $\theta_h$ as follows: if $(\psi_h, \vartheta_h, \uh)$ is a unique solution of system \eqref{5.3.3.1}, then
		\begin{align}\label{5.4.2.1.6}
			\omega_h := - \Pi^{{{k-2}}}(\Delta \psi_h), \qquad \qquad \theta_h := - \Pi^{{{k-2}}}(\Delta \vartheta_h),
		\end{align}
		is the approximation of the state and co-state vorticity, where
		\begin{align*}
			\big(\Pi^{{{k-2}}}v\big)\big|_{K} = \Pi_{K}^{{{k-2}}}(v|_{K}) \qquad \forall v \in L^{2}(\Omega) \quad \text{and} \quad \forall K \in \mscr{T}_{h}.
		\end{align*}
		\begin{remark}
			For all $\phi_h$ in the local virtual space $Z_{h}^{{{k}}}(K)$, by using the degrees of freedom $\boldsymbol{D_1}$-$\boldsymbol{D_5}$, the polynomial function $\Pi_{K}^{{k-2}}\big(\Delta \phi_h\big) \in \mathbb{P}_{{k-2}}(K)$ can be computed as elaborated in Remark~8.4\cite{MORAOSEEN}.
		\end{remark}
		\noindent Now, we present the convergence result for the state and co-state discrete vorticity.
		\begin{theorem}\label{Theorem 5.4.2.1.2}
			Suppose that the hypothesis of Lemma~\ref{Lemma 5.4.2.1.0} holds true. Then, for $s \in (1/2,2]$, we have
				\begin{align*}
            \|\omega-\omega_h\|_{0} &\lesssim h^{\min\{s,k-1\}}\big[\|\psi\|_{2+s} + {|\f+\u|_{\star,h}} \big],\\
    \|\theta-\theta_h\|_{0} &\lesssim h^{\min\{s,k-1\}}\big[ \|\vartheta\|_{2+s} + |\mathrm{\bf curl}(\psi)- {\mathrm{\bf curl}(\psi_d)}|_{\star,h} 
					\big].
				\end{align*}
		\end{theorem}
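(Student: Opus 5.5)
The estimate is local, so I would work on a single element $K$ and sum at the end. On each $K$ I would use $\omega = -\Delta\psi$ and $\omega_h|_K = -\Pi_K^{k-2}(\Delta\psi_h)$, insert $\Pi_K^{k-2}(\Delta\psi)$, and split with the triangle inequality:
\begin{align*}
\|\omega-\omega_h\|_{0,K} \le \|\Delta\psi - \Pi_K^{k-2}\Delta\psi\|_{0,K} + \|\Pi_K^{k-2}\big(\Delta(\psi-\psi_h)\big)\|_{0,K}.
\end{align*}

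For the first term I would apply \eqref{5.4.2.1.5b} with $m=0$ and $\delta=\min\{s,k-1\}$. Since $\Delta\psi\in H^{s}(K)$ whenever $\psi\in H^{2+s}(K)$, this gives $h_K^{\min\{s,k-1\}}|\Delta\psi|_{\min\{s,k-1\},K} \lesssim h_K^{\min\{s,k-1\}}\|\psi\|_{2+s,K}$. The second term is handled by the $L^2$-stability of $\Pi_K^{k-2}$ together with $\|\Delta v\|_{0,K}\le \sqrt{2}\,|v|_{2,K}$, which bounds it by $\lesssim |\psi-\psi_h|_{2,K}$.

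I would then square, sum over $K\in\mscr{T}_h$, and use $\|\cdot\|_Z = |\cdot|_{2}$ on $Z$ to get
\begin{align*}
\|\omega-\omega_h\|_0 \lesssim h^{\min\{s,k-1\}}\|\psi\|_{2+s} + \|\psi-\psi_h\|_Z .
\end{align*}

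The main point is to bound $\|\psi-\psi_h\|_Z$ with explicit data dependence. Theorem~\ref{Theorem 5.4.2.3} gives only a generic constant, so I would not use it directly. Instead I would split through the auxiliary state: $\|\psi-\psi_h\|_Z \le \|\psi-\psi_h(\u)\|_Z + \|\psi_h(\u)-\psi_h(\uh)\|_Z$, noting that $\psi_h=\psi_h(\uh)$ by uniqueness of \eqref{5.4.2.1}.
\begin{itemize}
\item The first piece is controlled by the $Z$-part of \eqref{5.4.2.3.0a} in Lemma~\ref{Lemma 5.4.2.1.0}: dividing by $h^{\tilde s}$ gives $h^{\min\{s,k-1\}}\big(\|\psi\|_{2+s}+|\f+\u|_{\star,h}\big)$.
\item The second piece is bounded by Lemma~\ref{Lemma 5.4.2.1}, giving $C_1\|\u-\uh\|_0$.
\item By Theorem~\ref{Theorem 5.4.2.2}, $\|\u-\uh\|_0 \lesssim h^{\min\{s,k-1\}+\tilde s}$, which is of higher order and is absorbed since $h\lesssim 1$.
\end{itemize}

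The co-state vorticity estimate follows by the same steps with $\vartheta$, $\theta=-\Delta\vartheta$ and $\theta_h$. In that case one uses \eqref{5.4.2.3.0b}, and the second part of Lemma~\ref{Lemma 5.4.2.1} together with the state error already controlled, which produces the data term $|\mathrm{\bf curl}(\psi)-\mathrm{\bf curl}(\psi_d)|_{\star,h}$.

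I expect no real obstacle. The only delicate point is keeping track of the higher-order control contribution so that it merges into the stated constant-dependent bound.
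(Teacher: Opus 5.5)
Your proposal is correct and follows essentially the paper's route. The paper uses the same splitting through $\Pi_K^{k-2}\Delta\psi$, the approximation bound \eqref{5.4.2.1.5b}, $L^2$-stability of $\Pi_K^{k-2}$, and then invokes Theorem~\ref{Theorem 5.4.2.3} directly; your detour through $\psi_h(\u)$, Lemma~\ref{Lemma 5.4.2.1} and Theorem~\ref{Theorem 5.4.2.2} just reproduces the proof of that theorem. That detour does not actually give sharper data dependence, because the hidden constant in Theorem~\ref{Theorem 5.4.2.2} is itself generic, so your ``absorption'' step is at exactly the paper's level of rigor.
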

		\begin{proof}
			The first estimate follows from the analysis presented in Theorem~8.5,\cite{MORAOSEEN} using \eqref{5.4.2.1.5b},  \eqref{5.4.2.1.6} and Theorem~\ref{5.4.2.3}, while the second estimate for the co-state vorticity error follows by similar steps.
		\end{proof}
		\subsubsection{\textbf{Recovering the Fluid Pressure}}
		In this subsection, we present a scheme similar to Section~8.5.3 of Ref.~\refcite{MORAOSEEN} to recover the state fluid pressure $p$ (same approach will follow for the co-state pressure $r$). From the momentum equation in \eqref{5.2.0.2}, we obtain
		\begin{align}
		\nonumber	\nabla p &= \f + \u - \nu \, \curl\big(\curl(\y)\big) - (\boldsymbol{\beta} \cdot \nabla) \y - \sigma \y  \\
		\nonumber	&= \f + \u - \nu \, \curl\big(\curl \big(\textbf{\textit{curl}}(\psi)\big)\big) - (\nabla \curl(\psi)) \boldsymbol{\beta} - \sigma \, \curl(\psi) \\
		\label{5.4.2.2.01}	&= \f + \u + \nu \, \curl \big(\Delta \psi\big) - (\nabla \curl (\psi)) \boldsymbol{\beta} - \sigma \, \curl(\psi).
		\end{align}
		\indent Now, to calculate the fluid pressure, we test \eqref{5.4.2.2.01} with $\nabla q$, where $q \in \mscr{H}$ (as defined in \eqref{Hspace}) and obtain the following variational problem : find $p \in \mscr{H}$ such that
		\begin{align*}
			\mscr{R}_{\nabla}(p,z) &= \mscr{G}^{\psi}(z)  && \forall z \in \mscr{H},
		\end{align*}
		where for all $p,z \in \mscr{H}$, $\mscr{R}_{\nabla}: \mscr{H} \times \mscr{H} \rightarrow \R$ and $\mscr{G}^{\psi}: \mscr{H} \rightarrow \R$ are defined as:
		\begin{align}
			\mscr{R}_{\nabla} (p,z) &:= (\nabla p, \nabla z), \qquad
			\label{5.4.2.2.3} 
			\mscr{G}^{\psi}(z) := \left(\f + \u + \nu \ \curl(\Delta \psi) - (\nabla \curl(\psi)) \boldsymbol{\beta} - \sigma \, \curl(\psi),\nabla z\right).
		\end{align}
		\indent From this point forward, we assume that $\Omega$ is a convex domain. This gives us extra regularity for the unique solution $\psi$ of the state problem \eqref{5.2.1.2}. Specifically, we have $\psi \in H^3(\Omega)$, and there is a positive constant $C$ such that $\|\psi\|_3 \le C \|\f + \u\|_0$.\\
		
		 We now build a lowest-order scheme to estimate the pressure on the mesh $\mscr{T}_h$. Firstly, we decompose the bilinear form $\mscr{R}_{\nabla} (\cdot, \cdot)$ into elemental contributions as:
		\begin{align*}
			\mscr{R}_{\nabla} (p,z) &:= \sum_{K \in \mscr{T}_{h}} \mscr{R}_{\nabla} ^{K}(p,z) = \sum_{K \in \mscr{T}_{h}} (\nabla p, \nabla z)_{K} && \forall p,z \in \mscr{H}.
		\end{align*}
		\indent For all $K \in \mscr{T}_{h}$, consider the finite-dimensional space $\widetilde{\mscr{P}}_{h}(K)$, which is defined as:
		\begin{align*}
			\widetilde{\mscr{P}}_{h}(K) := \big\{z_h \in H^{1}(K) \cap C^{0}(\partial K): z_h|_{E} \in \mathbb{P}_{1}(E) \quad \forall E \subset \partial K, \quad \Delta z_h \in \mathbb{P}_{0}(K) \big\}.
		\end{align*}
		\indent The set $\boldsymbol{P_1}$, which contains linear operators which compute values of $z_h$ at the vertices of $K$ is defined for all $z_h \in \widetilde{\mscr{P}}_{h}(K)$. Next, for all $q_h \in \widetilde{\mscr{P}}_{h}(K)$, we define the projector $\Pi_{K}^{\nabla}:\widetilde{\mscr{P}}_{h}(K) \rightarrow \mathbb{P}_{1}(K) \subseteq \widetilde{\mscr{P}}_{h}(K)$ as the solution of
		\begin{align*}
			\mscr{R}_{\nabla} ^{K}\left(\Pi_{K}^{\nabla}z_h, p_1\right) &= \mscr{R}_{\nabla} ^{K}(z_h,p_1) \quad \forall p_1 \in \mathbb{P}_{1}(K), && \hat{\Pi_{K}^{\nabla}z_h} = \hat{z_h},
		\end{align*}
		where $\hat{(\cdot)}$ is defined as in \eqref{5.3.1.2.0}. The operator $\Pi_{K}^{\nabla}$ can be directly computed using the set $\boldsymbol{P_1}$\cite{VEMGSOC}. By combining this projection with the definition of $\widetilde{\mscr{P}}_{h}(K)$, we define the following local virtual space:
		\begin{align*}
			\mscr{P}_{h}(K) := \big\{z_h \in \widetilde{\mscr{P}}_{h}(K): (z_h - \Pi_{K}^{\nabla}z_h,1)_{K} = 0 \big\}.
		\end{align*}
		\indent Clearly, $\mathbb{P}_{1}(K) \subseteq \mscr{P}_{h}(K) \subseteq \widetilde{\mscr{P}}_{h}(K)$. Additionally, the set $\boldsymbol{P_1}$ serves as the degrees of freedom for $\mscr{P}_{h}(K)$, and the operator $\Pi_{K}^{\nabla}$ can be computed using the set $\boldsymbol{P_1}$ \cite{VEMGSOC}. We will now form the global virtual space to approximate the discrete pressure as:
		\begin{align*}
			\mscr{P}_{h} := \big\{q_h \in \mscr{H}: q_h|_{K} \in \mscr{P}_{h}(K) \quad \forall K \in \mscr{T}_h\big\},
		\end{align*}
		where $\mscr{H}$ is as defined in \eqref{Hspace}. We have that for all $z_h \in \mscr{P}_{h}(K)$, $\boldsymbol{\Pi}_{K}^{0} \nabla z_h$ is fully computable using the degrees of freedom $\boldsymbol{P_1}$, where  $\boldsymbol{\Pi}_{K}^{0}:\L^{2}(K) \rightarrow [\mathbb{P}_{0}(K)]^{2}$ is the classical $L^{2}$-projection.
        
       Next, we consider $\mscr{S}_{\nabla}^{K}(\cdot,\cdot)$ as any symmetric positive definite bilinear form which satisfies the following:
		\begin{align}\label{5.4.2.2.5} 
			\alpha_{4}\mscr{R}_{\nabla}^{K}(z_h,z_h)\le \mscr{S}_{\nabla}^{K}(z_h,z_h) \le \alpha_{5}\mscr{R}_{\nabla}^{K}(z_h,z_h),
		\end{align}
		for all $z_h \in \mscr{P}_{h}(K), \ \text{with} \ \boldsymbol{\Pi}_{K}^{0} \nabla q_h = 0$ and positive constants $\alpha_{4}$ and $\alpha_{5}$ independent of $K$.
		
		 Now, for all $p_h,z_h \in \mscr{P}_{h}(K)$, we define
			\begin{align*}
        \mscr{R}_{\nabla}^{h}(p_h,z_h)&:= \sum_{K \in \mscr{T}_{h}} \mscr{R}_{\nabla}^{h,K}(p_h,z_h);\\
        \mscr{R}_{\nabla}^{h,K}(p_h,z_h) &:= \big(\boldsymbol{\Pi}_{K}^{0} \nabla p_h, \boldsymbol{\Pi}_{K}^{0} \nabla z_h\big)_{K} + \mscr{S}_{\nabla}^{K}\big(p_h - \Pi_{K}^{\nabla}p_h,z_h - \Pi_{K}^{\nabla}z_h\big).
			\end{align*}
			
			\blue{By following Refs.~\refcite{VEM1} and \refcite{STEKLOV}, the bilinear form $\mscr{S}_{\nabla}^{K}(\cdot,\cdot)$ satisfying \eqref{5.4.2.2.5} is chosen as:
				$$
				\mscr{S}_{\nabla}^{K}(p_h,z_h) := \sum_{j=1}^{N_K} p_h(\v_{i}) z_h(\v_{i}) \qquad \forall p_h,z_h \in \mscr{P}_{h}(K).
				$$}
		\indent The local bilinear forms $\mscr{R}_{\nabla}^{K}(\cdot,\cdot)$ and $\mscr{R}_{\nabla}^{h,K}(\cdot,\cdot)$ satisfy the classical consistency and stability properties (cf. Ref.~\refcite{VEMGSOC}).
		Next, we will create an approximation for the right-hand side of equation \eqref{5.4.2.2.3}.
		We will assume that the discrete problem \eqref{5.3.3.1} has been solved for \(k=3\). Consequently, we now have \(\psi_h \in Z_h^{3}\), which meets the error estimate stated in Theorem~\ref{Theorem 5.4.2.3}.
		For every element \( K \in \mscr{T}_{h} \) and for every function \( z_h \) in the space \( \mscr{P}_{h}(K) \), we define the discrete linear functional as follows:
		\begin{align*}
			\mscr{G}^{\psi_h,K}(z_h) &:= \left(\f+\uh, \boldsymbol{\Pi}_{K}^{0}\nabla z_h\right)_{K} + \nu \big(\curl \big(\Pi_{K}^{1}(\Delta \psi_h)\big), \boldsymbol{\Pi}_{K}^{0}\nabla z_h\big)_{K} \\
			& \ \ \ \ - \big((\nabla \boldsymbol{\Pi}_{K}^{2} \curl(\psi_h)) \bbeta, \boldsymbol{\Pi}_{K}^{0}\nabla z_h\big)_{K} - \big(\sigma \boldsymbol{\Pi}_{K}^{2} \curl(\psi_h), \boldsymbol{\Pi}_{K}^{0}\nabla z_h\big)_{K},
		\end{align*}
		where $\boldsymbol{\Pi}_{K}^{2}$ and $\Pi_{K}^{1}$ are the projections introduced in Sec.~\ref{Virtual Spaces and Polynomial Projections Operator} and Sec.~\ref{Computation of Velocity Field and Fluid Vorticity}, respectively. By a use of the set of degrees of freedom $\boldsymbol{P_1}$ , $\mscr{G}^{\psi_h,K}(\cdot)$ is fully computable. We define the global linear functional $\mscr{G}^{\psi_h}(\cdot)$ as:
		\begin{align}\label{5.4.2.2.9} 
			\mscr{G}^{\psi_h}(z_h)&:= \sum_{K \in \mscr{T}_{h}} \mscr{G}^{\psi_h,K}(z_h) && \forall z_h \in \mscr{P}_{h}.
		\end{align}
		We propose a lowest-order virtual element to estimate the fluid pressure. Given $\psi_h \in Z_{h}^{3}$, we want to determine $p_h \in \mscr{P}_{h}$, such that
		\begin{align}\label{5.4.2.2.10} 
			\mscr{R}_{\nabla}^{h}(p_h,z_h) &= \mscr{G}^{\psi_h}(z_h) && \forall z_h \in \mscr{P}_{h}.
		\end{align} 
		By an application of Proposition~8.6\cite{MORAOSEEN},
		$\mscr{R}_{\nabla}^{h}(\cdot,\cdot)$ satisfies the bound
		\begin{align*}
			\mscr{R}_{\nabla}^{h}(z_h,z_h) &\ge \alpha_{p} \|z_h\|_{1}^{2} && \forall z_h \in \mscr{P}_{h},
		\end{align*}
		where $\alpha_{p} >0$ is a constant independent of $h$. Our next aim is to show that the discrete linear functional $\mscr{G}^{\psi_h}(\cdot)$ defined in \eqref{5.4.2.2.9} is bounded. Firstly, we consider the following interpolation result established in Refs.~\refcite{CANG} and \refcite{STEKLOV}. 
		\begin{lemma}\label{Lemma 5.4.2.2.3}
			There exists a constant \(C>0\) independent of \( h \) such that for all \( v \in H^{2}(\Omega) \), if assumptions $\boldsymbol{\mathcal{A}1}$ and $\boldsymbol{\mathcal{A}2}$ hold, then there exists \( v_{I} \in \mscr{P}_h \) which satisfies
			\begin{align*}
				\|v-v_{I}\|_{0} + h|v-v_{I}|_{1} \le Ch^{2}|v|_{2}.
			\end{align*}
		\end{lemma}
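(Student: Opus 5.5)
The plan is to build $v_I$ element by element in $\mscr{P}_h$ and get the global bound by summing local estimates. Since $H^2(\Omega)\subset C^0(\overline{\Omega})$ in two dimensions, the vertex values $v(\v_i)$ make sense, so the degrees of freedom $\boldsymbol{P_1}$ can be applied to $v$. On each $K$ we let $\tilde v_I|_K\in\mscr{P}_h(K)$ be the unique function whose vertex values coincide with those of $v$. Because $\boldsymbol{P_1}$ is unisolvent for $\mscr{P}_h(K)$, this is well defined. Adjacent elements share the same vertex values and the traces are linear on each edge, so $\tilde v_I\in C^0(\overline\Omega)\cap H^1(\Omega)$.

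To land in $\mscr{H}$ we subtract a constant and set $v_I:=\tilde v_I-\frac{1}{|\Omega|}(\tilde v_I,1)$. Constants belong to $\mathbb{P}_1(K)\subseteq\mscr{P}_h(K)$, so $v_I\in\mscr{P}_h$. Strictly, the statement needs $v\in H^2(\Omega)\cap\mscr{H}$ (or one adjusts $v$ by its mean first). Then $(v,1)=0$ gives $|(\tilde v_I,1)|=|(\tilde v_I-v,1)|\le|\Omega|^{1/2}\|v-\tilde v_I\|_0$. Hence the mean correction costs at most another $\|v-\tilde v_I\|_0$ in the $L^2$ norm and nothing in the $H^1$ seminorm. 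It therefore suffices to bound $v-\tilde v_I$.

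For the local estimate, let $v_\pi\in\mathbb{P}_1(K)$ be given by Lemma~\ref{Lemma 5.4.2.2.2} with $\delta=2$, so that $\|v-v_\pi\|_{0,K}+h_K|v-v_\pi|_{1,K}\lesssim h_K^2|v|_{2,K}$. Since $\mathbb{P}_1(K)\subseteq\mscr{P}_h(K)$, the interpolant reproduces $v_\pi$, and we write
\[
v-\tilde v_I=(v-v_\pi)-I_K(v-v_\pi),
\]
where $I_K$ denotes the local vertex interpolation. The key ingredient is a stability bound for $I_K$ on $H^2(K)$:
\begin{equation*}
\|I_K w\|_{0,K}+h_K|I_Kw|_{1,K}\lesssim \|w\|_{L^\infty(K)}\, h_K \lesssim \|w\|_{0,K}+h_K|w|_{1,K}+h_K^2|w|_{2,K}.
\end{equation*}
The second inequality is the scaled Sobolev embedding already used in Proposition~\ref{Lemma:SD:stab1}, taken from Ref.~\refcite{VEMSE}. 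The first requires a norm-equivalence on $\mscr{P}_h(K)$: for $z_h\in\mscr{P}_h(K)$,
\[
h_K^{-1}\|z_h\|_{0,K}+|z_h|_{1,K}\lesssim \Big(\sum_i z_h(\v_i)^2\Big)^{1/2}.
\]
Applying this with $w=v-v_\pi$ and summing over $K$ gives the claim.

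The main obstacle is this discrete norm-equivalence, uniformly in $K$ under $\boldsymbol{\mathcal{A}1}$--$\boldsymbol{\mathcal{A}2}$. I would prove it in two parts. For the $H^1$ part, $z_h$ is close to harmonic ($\Delta z_h\in\mathbb{P}_0$, constrained by the mean condition). One bounds $|z_h|_{1,K}$ by the $H^{1/2}(\partial K)$ norm of its piecewise linear trace, following the argument behind the stabilization estimate \eqref{5.4.2.2.5} (cf. Refs.~\refcite{VEMGSOC}, \refcite{STEKLOV}). That trace norm is controlled by the vertex values times a constant depending only on $\boldsymbol{\mathcal{A}1}$, the number of edges being bounded. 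The $L^2$ part then follows from the Poincaré inequality \eqref{Poincare2:H1} together with $|\int_{\partial K}z_h|\lesssim h_K\max_i|z_h(\v_i)|$. Alternatively, one can simply cite Refs.~\refcite{CANG} and \refcite{STEKLOV}, where exactly this interpolation estimate is established.
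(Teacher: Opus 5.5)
Your proof is correct. The paper gives no argument of its own for this lemma: it simply quotes it from Refs.~\refcite{CANG} and \refcite{STEKLOV}, which is your closing fallback. Your self-contained route uses vertex interpolation, reproduction of $\mathbb{P}_1(K)$, Lemma~\ref{Lemma 5.4.2.2.2} with $\delta=2$, and one uniform stability bound for $I_K$. It gives the $L^2$ and $H^1$ parts at once and makes explicit two points that the bare citation leaves implicit.

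First, your remark on the mean is a genuine correction. Since $\mscr{P}_h\subset\mscr{H}$, the lemma fails for $v\equiv 1$: there $|v|_2=0$ but $\|v-v_I\|_0\ge|\Omega|^{1/2}$ for every admissible $v_I$. So it must be read for $v\in H^2(\Omega)\cap\mscr{H}$, which is all that is needed since it is applied to $p\in\mscr{H}$.

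Second, functions in $\mscr{P}_h(K)$ are not harmonic, so the norm equivalence needs one step you only gesture at: control of the constant $\Delta z_h$. Write $z_h=z^0+c\,w$, with $z^0$ the harmonic extension of the trace, $\Delta w=1$ and $w|_{\partial K}=0$.
\begin{itemize}
\item Because $\Pi_K^{\nabla}z_h=\Pi_K^{\nabla}z^0$ depends only on the trace, the constraint $(z_h-\Pi_K^{\nabla}z_h,1)_K=0$ together with \eqref{Poincare2:H1} gives $|c|\,|(w,1)_K|=|(z^0-\Pi_K^{\nabla}z^0,1)_K|\lesssim h_K^2|z^0|_{1,K}$.
\item On the other hand $|c\,w|_{1,K}^2=c^2|(w,1)_K|$.
\item Moreover $|(w,1)_K|\gtrsim h_K^4$. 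To see this, compare $w$ with $(|\boldsymbol{x}-\boldsymbol{x}_0|^2-r^2)/4$ on the ball $B(\boldsymbol{x}_0,r)$, $r=C_{\mscr{T}_h}h_K$, from $\boldsymbol{\mathcal{A}2}$.
\end{itemize}
Combining these gives $|z_h|_{1,K}\lesssim|z^0|_{1,K}$.

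Finally, $|z^0|_{1,K}$ is bounded by the energy of the piecewise linear extension of the trace on the star sub-triangulation of $K$. That sub-triangulation is shape regular under $\boldsymbol{\mathcal{A}1}$--$\boldsymbol{\mathcal{A}2}$, so the energy is controlled by the vertex values. This is slightly simpler than passing through $H^{1/2}(\partial K)$.
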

\indent  To prove the boundedness of $\mscr{G}^{\psi_h}(\cdot)$ and derive an error estimate for the pressure scheme (cf. Lemma~\ref{Lemma 5.4.2.2.4} and Theorem~\ref{Theorem 5.4.2.2.5}), we assume that the polygonal mesh family $\mscr{T}_h$ is quasi-uniform. More precisely, the following additional mesh assumption is required.\\
			$\boldsymbol{\mathcal{A}3}:$ For all $h>0$ and for all $K \in \mscr{T}_h$, there exists a constant $\hat{c}>0$ independent of $h$, such that $h_{K} \ge \hat{c}h$. 
		\begin{lemma}\label{Lemma 5.4.2.2.4}
			Let \(\psi\) be the unique solution of the state problem \eqref{5.2.1.2} in \(H^{3}(\Omega)\), and \(\psi_h\) be the unique solution of the discrete system in \(Z_{h}^{3}\). If conditions $\boldsymbol{\mathcal{A}1}$-$\boldsymbol{\mathcal{A}3}$ hold, then the discrete linear functional \(\mscr{G}^{\psi_h}(\cdot)\) is well-defined and bounded.
		\end{lemma}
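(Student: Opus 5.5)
We bound each of the four terms of $\mscr{G}^{\psi_h,K}(z_h)$ separately and then sum over $K$. The aim is an estimate of the form
\begin{align*}
|\mscr{G}^{\psi_h}(z_h)| \lesssim \big(\|\f\|_0+\|\uh\|_0+\|\psi\|_3 + \|\psi_h\|_{Z}\big)\,|z_h|_1 \qquad \forall z_h\in\mscr{P}_h,
\end{align*}
which shows that the functional is well-defined and bounded. Two ingredients recur throughout:

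\textbf{(a)} the $L^2$-stability of the projection, $\|\boldsymbol{\Pi}_K^0\nabla z_h\|_{0,K}\le |z_h|_{1,K}$;

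\textbf{(b)} the fact that all projections appearing, namely $\Pi_K^1(\Delta\psi_h)$ and $\boldsymbol{\Pi}_K^2\,\curl(\psi_h)$, are computable from the degrees of freedom $\boldsymbol{D_1}$--$\boldsymbol{D_5}$ (Lemma~\ref{Lemma 5.3.1.1.0}, the analogue of Remark~8.4 in Ref.~\refcite{MORAOSEEN}). This makes the functional well-defined.

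The first term is bounded directly by Cauchy--Schwarz. The last term, involving $\sigma\in L^\infty$, follows from Cauchy--Schwarz, the $L^2$-stability of $\boldsymbol{\Pi}_K^2$ and the Poincar\'e bound $|\psi_h|_1\lesssim\|\psi_h\|_Z$. Similarly, the convective term is handled by $\|(\nabla\boldsymbol{\Pi}_K^2\curl\psi_h)\bbeta\|_{0,K}\le\|\bbeta\|_{\infty}|\boldsymbol{\Pi}_K^2\curl\psi_h|_{1,K}$ together with the $H^1$-stability \eqref{3.5.1.5} of the projector (with $k=3$), which gives the bound $\lesssim|\psi_h|_{2,K}$.

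The main obstacle is the viscous term $\nu(\curl(\Pi_K^1\Delta\psi_h),\boldsymbol{\Pi}_K^0\nabla z_h)_K$. It involves third derivatives of $\psi_h$, and these are not controlled by $\|\psi_h\|_Z$. The plan is to insert $\Delta\psi$ by writing
\begin{align*}
\curl(\Pi_K^1\Delta\psi_h)=\curl\big(\Pi_K^1(\Delta\psi_h-\Delta\psi)\big)+\curl(\Pi_K^1\Delta\psi).
\end{align*}
For the second piece, the $H^1$-stability of $\Pi_K^1$ on $H^1(K)$ (from Lemma~\ref{Lemma 5.3.1.3} combined with the approximation result \eqref{5.4.2.1.5b}) gives the bound $\lesssim\|\psi\|_{3,K}$. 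For the first piece, the inverse inequality of Lemma~\ref{Lemma 5.3.1.3} yields $\lesssim h_K^{-1}\|\Delta(\psi-\psi_h)\|_{0,K}$. This in turn is bounded by $h_K^{-1}|\psi-\psi_h|_{2,K}$, and by Theorem~\ref{Theorem 5.4.2.3} with $k=3$ (where convexity gives $s=\tilde{s}=1$) we have $|\psi-\psi_h|_{2}\lesssim h$. To combine these local factors $h_K^{-1}$ with the global rate $h$ we invoke the quasi-uniformity assumption $\boldsymbol{\mathcal{A}3}$, which gives $h_K^{-1}\le\hat{c}^{-1}h^{-1}$. Hence, after summing with Cauchy--Schwarz over the elements,
\begin{align*}
\sum_K h_K^{-1}\|\Delta(\psi-\psi_h)\|_{0,K}\,|z_h|_{1,K}\lesssim h^{-1}|\psi-\psi_h|_2\,|z_h|_1\lesssim \|\psi\|_3\,|z_h|_1 .
\end{align*}

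Collecting the four bounds, summing over $K$, and using $\|\uh\|_0\le C$ (because $\uh\in\mscr{U}_{\boldsymbol{ad}}$ is bounded), $\|\psi\|_3\lesssim\|\f+\u\|_0$, and $\|\psi_h\|_Z\lesssim\|\f\|_0+\|\uh\|_0$ (Lemma~\ref{Lemma 5.3.2.2}), we obtain the claimed boundedness with a constant independent of $h$. This uses that $|\cdot|_1$ is a norm on $\mscr{H}$.
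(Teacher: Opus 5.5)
Your proposal is correct and follows essentially the same route as the proof the paper defers to (Proposition~4.20 of Ref.~\refcite{MORABRINKMAN}): bound each term by Cauchy--Schwarz, and handle the only delicate term, the viscous one, by inserting $\Delta\psi$, applying the inverse inequality to $\Pi_K^1\Delta(\psi_h-\psi)$, using the $O(h)$ $H^2$-error estimate, and invoking $\boldsymbol{\mathcal{A}3}$ to cancel the resulting $h_K^{-1}$. One small correction: Theorem~\ref{Theorem 5.4.2.3} does not give $|\psi-\psi_h|_2\lesssim h\|\psi\|_3$, because its hidden constant also contains $|\f+\u|_{1,h}$ (since $\star=1$ for $k=3$) and the constant from the control estimate, so your final bound should carry that data-dependent constant; this is also why the paper's bound on $\|p_h\|_1$ contains $|\f|_{1,h}+|\u|_{1,h}$.
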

		\begin{proof}
			The proof follows from the proof of Proposition~4.20\cite{MORABRINKMAN}.
		\end{proof}
		By using the ellipticity of the bilinear form \(\mscr{R}_{\nabla}^{h}(\cdot,\cdot) \) and Lemma~\ref{Lemma 5.4.2.2.4}, the discrete virtual element scheme \eqref{5.4.2.2.10} has a unique solution \( p_h \in \mscr{P}_h \). Additionally, there exists a constant \( C > 0 \) that does not depend on \( h \) such that
		\begin{align*}
			\|p_h\|_{1} \le C\big(\|\f\|_{0} + \|\u\|_{0}+|\f|_{1,h}+|\u|_{1,h}\big).
		\end{align*}
		\indent Next, we will derive the convergence estimate for the discrete scheme \eqref{5.4.2.2.10}, considering the conditions $\boldsymbol{\mathcal{A}1}$-$\boldsymbol{\mathcal{A}3}$ along with the assumptions that \( p \in H^2(\Omega) \) and \( \psi \in H^4(\Omega) \). This additional regularity for the stream-function can be attained, for instance, if $\f \in \L^{2}(\Omega)$ and $\Omega$ is a rectangular domain (cf. Ref.~\refcite{C0IP}).
		\begin{theorem}\label{Theorem 5.4.2.2.5}
			Let {$\f, \u \in \boldsymbol{H}^{1}(\Omega,\mscr{T}_{h})$} and $(\psi,p)$ and $(\psi_h, p_h)$ be the stream-function and pressure solutions for continuous and discrete state problem, respectively. If the assumptions $\boldsymbol{\mathcal{A}1}$-$\boldsymbol{\mathcal{A}3}$ are satisfied, $p \in H^2(\Omega)$ and $\psi \in H^4(\Omega)$.
			Then the following error estimate hold:
			\begin{align*}
				\|p-p_h\|_{1} &\lesssim h\big(\|\psi\|_{4} + |\f|_{1,h} + |\u|_{1,h}\big).
			\end{align*}
		\end{theorem}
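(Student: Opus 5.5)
The argument is a Strang-type estimate for the pressure scheme \eqref{5.4.2.2.10}, following Theorem~4.21 of Ref.~\refcite{MORABRINKMAN} and Section~8.5.3 of Ref.~\refcite{MORAOSEEN}.

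\textbf{Step 1 (error splitting).} Let $p_I\in\mscr{P}_h$ be the interpolant of Lemma~\ref{Lemma 5.4.2.2.3}, so that $\|p-p_I\|_1\lesssim h|p|_2$. Set $\delta_h:=p_h-p_I$. By the ellipticity of $\mscr{R}_\nabla^h$ on $\mscr{P}_h$,
\begin{align*}
\alpha_p\|\delta_h\|_1^2\le \mscr{R}_\nabla^h(p_h,\delta_h)-\mscr{R}_\nabla^h(p_I,\delta_h)
&=\big[\mscr{G}^{\psi_h}(\delta_h)-\mscr{G}^{\psi}(\delta_h)\big]\\
&\quad+\sum_{K}\big[\mscr{R}_\nabla^K(p,\delta_h)-\mscr{R}_\nabla^{h,K}(p_I,\delta_h)\big],
\end{align*}
where we used $\mscr{R}_\nabla(p,\delta_h)=\mscr{G}^\psi(\delta_h)$; this is legitimate since $\delta_h\in\mscr{P}_h\subset\mscr{H}$.

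\textbf{Step 2 (bilinear consistency term).} Insert a piecewise linear $p_\pi\in\mathbb{P}_1(K)$ and use the polynomial consistency of $\mscr{R}_\nabla^{h,K}$ on $\mathbb{P}_1(K)$ together with its stability. This gives the bound $\big(|p-p_\pi|_{1,h}+|p-p_I|_1\big)|\delta_h|_1\lesssim h|p|_2\|\delta_h\|_1$, by Lemma~\ref{Lemma 5.4.2.2.2} and Lemma~\ref{Lemma 5.4.2.2.3}.

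\textbf{Step 3 (functional consistency, the main obstacle).} Write $\mscr{G}^{\psi_h}-\mscr{G}^{\psi}$ term by term. In each term, replace $\nabla\delta_h$ by $\boldsymbol{\Pi}^0_K\nabla\delta_h$ at the cost of an $L^2$-projection remainder.

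\emph{Data terms.} For $(\f+\uh,\boldsymbol{\Pi}^0_K\nabla\delta_h)_K-(\f+\u,\nabla\delta_h)_K$, split into $(\uh-\u,\cdot)$ and $((\boldsymbol{\Pi}^0_K-\mathbf I)(\f+\u),\nabla\delta_h)$. The first is bounded by Theorem~\ref{Theorem 5.4.2.2}, which gives $\|\u-\uh\|_0\lesssim h^{2}$ here. The second is $\lesssim h(|\f|_{1,h}+|\u|_{1,h})$.

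\emph{Viscous term.} Bound $\nu\big(\curl(\Pi^1_K\Delta\psi_h)-\curl\Delta\psi\big)$ by
\[
|\Delta\psi-\Pi^1_K\Delta\psi|_{1,K}+|\Pi^1_K\Delta(\psi-\psi_h)|_{1,K}.
\]
The first piece is $\lesssim h\|\psi\|_4$. For the second, an inverse inequality (Lemma~\ref{Lemma 5.3.1.3}) gives $h_K^{-1}\|\psi-\psi_h\|_{2,K}$. Summing this over $K$ requires $\boldsymbol{\mathcal{A}3}$, so that $h_K^{-1}\lesssim h^{-1}$, and combining with Theorem~\ref{Theorem 5.4.2.3} for $k=3$, $s=2$, $\tilde s=1$ yields $h^{-1}h^{2}=h$. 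This is the delicate point: the inverse inequality costs a power of $h$, and only the full $H^2$ rate $O(h^2)$ together with quasi-uniformity recovers $O(h)$.

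\emph{Convective and reaction terms.} These are handled the same way. For $(\nabla\boldsymbol{\Pi}^2_K\curl\psi_h)\bbeta$, add and subtract $\boldsymbol{\Pi}^2_K\curl\psi$. The approximation part gives $h^2\|\psi\|_4$ by Lemma~\ref{lemma:approx_vector_matrix}. The remaining part $|\boldsymbol{\Pi}^2_K\curl(\psi-\psi_h)|_{1,K}$ is handled by the $H^1$-stability of the projection (cf.~\eqref{3.5.1.5}), giving $\lesssim\|\psi-\psi_h\|_{2,K}$. The $\sigma$ term is simpler still, using $L^2$-stability and the $H^1$ estimate.

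\textbf{Step 4 (conclusion).} Collecting the bounds gives $\|\delta_h\|_1\lesssim h(\|\psi\|_4+|p|_2+|\f|_{1,h}+|\u|_{1,h})$. Then $|p|_2$ is controlled via \eqref{5.4.2.2.01} by $\|\psi\|_4$ and the data, and the triangle inequality with $\|p-p_I\|_1$ finishes the proof.
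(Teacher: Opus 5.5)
Your proposal is correct and is essentially the paper's argument: the paper's proof just combines Lemma~\ref{Lemma 5.4.2.2.2}, the interpolation estimate of Lemma~\ref{Lemma 5.4.2.2.3}, and Propositions~8.9--8.10 of Ref.~\refcite{MORAOSEEN} in this same Strang-type fashion, and your treatment of the viscous term (inverse inequality, $\boldsymbol{\mathcal{A}3}$, and the $O(h^2)$ $Z$-rate for $k=3$) matches the paper's stated reason for imposing $\boldsymbol{\mathcal{A}3}$. Your explicit handling of the control mismatch $\u-\uh$ via Theorem~\ref{Theorem 5.4.2.2} and your elimination of $|p|_2$ through \eqref{5.4.2.2.01} fill in details that the paper leaves implicit when it cites the uncontrolled Oseen analysis (minor nits: the $H^1$-stability you need for $\boldsymbol{\Pi}^2_K\curl(\psi-\psi_h)$ is the general one for $L^2$-projections of $H^1(K)$ functions, not \eqref{3.5.1.5}, which is stated only for virtual functions; and the $\sigma$-remainder and the bound on $|p|_2$ tacitly use $\sigma\in W^{1,\infty}(\Omega)$, in line with the smoothness assumed in Lemma~\ref{addReg}).
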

		\begin{proof}
			The proof of this estimate is obtained by a combination of the Lemmas~\ref{Lemma 5.4.2.2.2}, \ref{Lemma 5.4.2.2.3} and Proposition~8.9, 8.10\cite{MORAOSEEN}.
		\end{proof}

		\section{Numerical Experiments}\label{Numerical Experiments.}\setcounter{equation}{0}
		In this section, we provide a set of numerical tests to demonstrate the practical effectiveness of the proposed VEM schemes and verify the theoretical estimates. We will apply the proposed scheme on four different polygonal meshes shown in Fig.~\ref{MESH} using Algorithm~1.
		\begin{figure}
			\centering
			\subfloat[$\mscr{T}_{h}^{1}$ \label{mesh1}]{\includegraphics[width=0.45\textwidth]{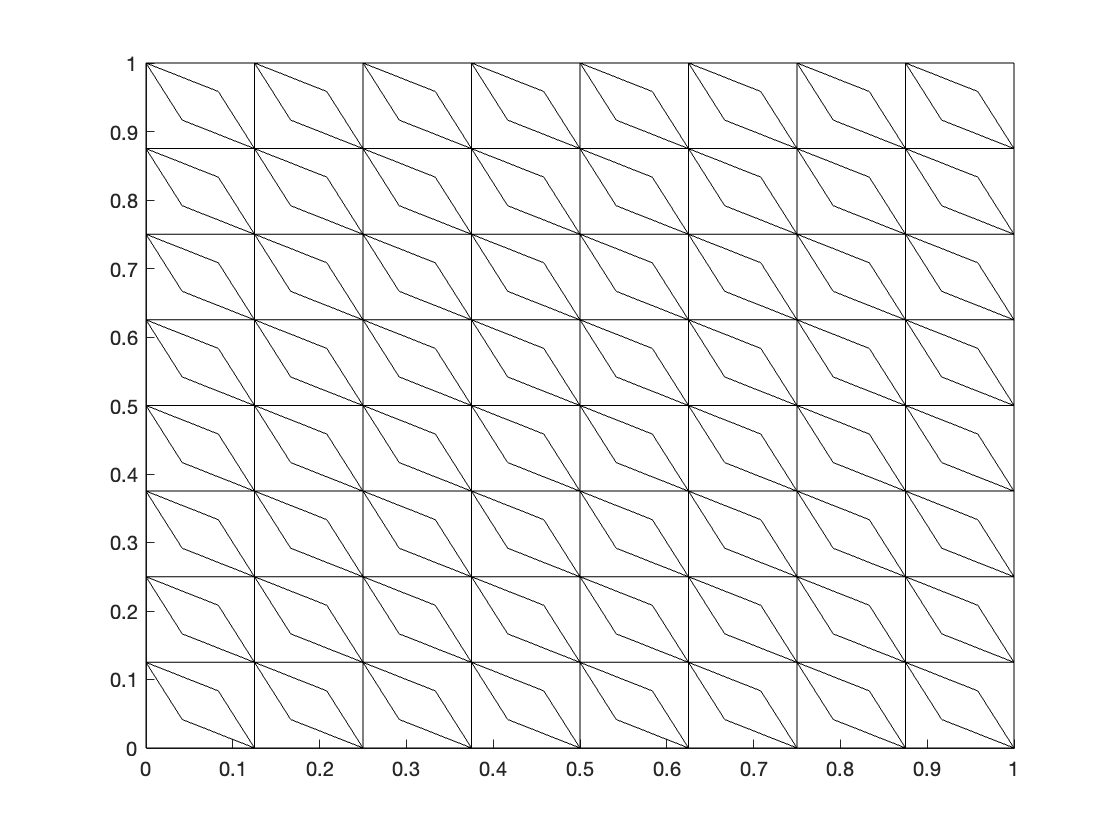}} \hfill
			\subfloat[$\mscr{T}_{h}^{2}$ \label{mesh2}]{\includegraphics[width=0.45\textwidth]{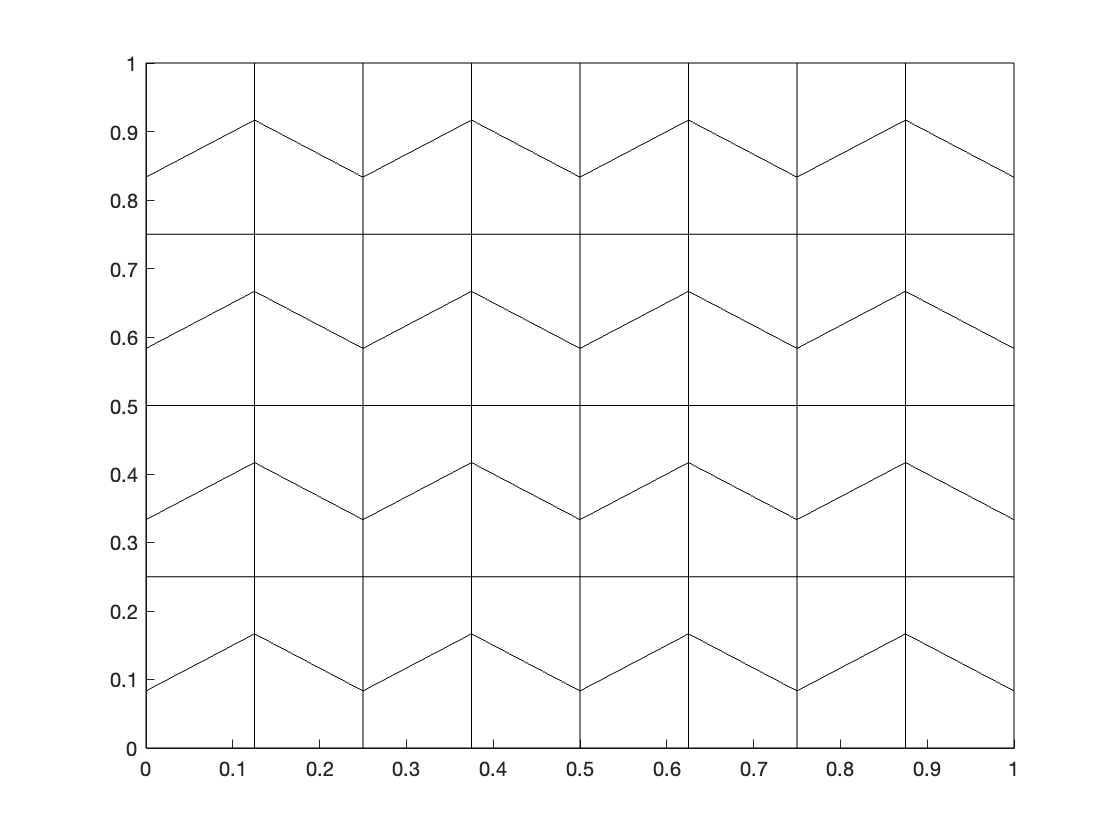}}
			\\
			\subfloat[$\mscr{T}_{h}^{3}$ \label{mesh3}]{\includegraphics[width=0.45\textwidth]{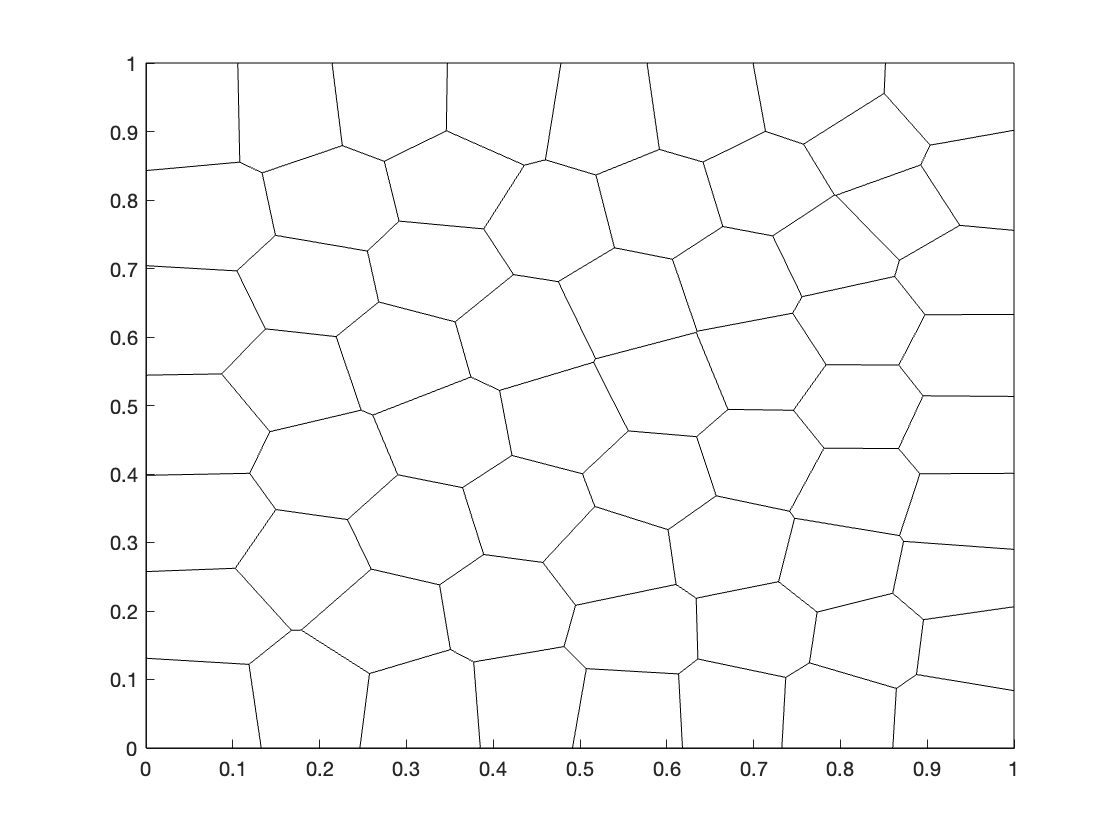}} \hfill
			\subfloat[$\mscr{T}_{h}^{4}$ \label{mesh4}]{\includegraphics[width=0.45\textwidth]{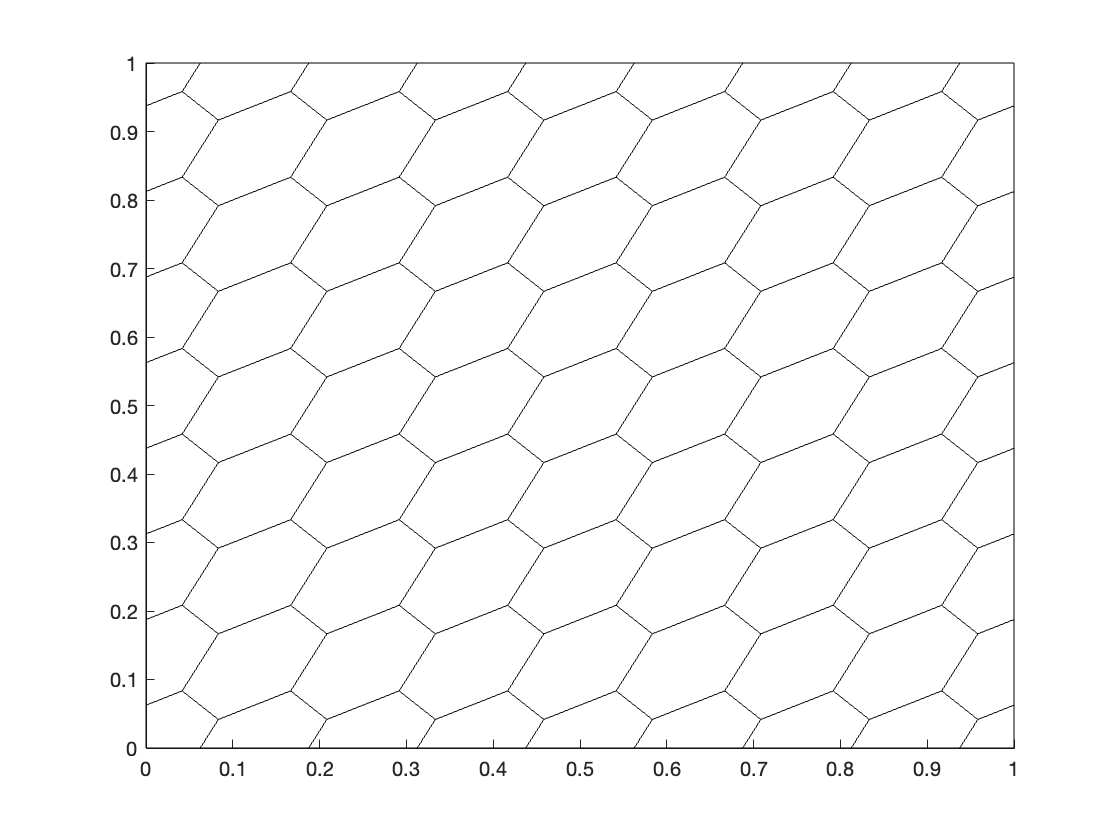}} 
			\caption{\eqref{mesh1} $\mscr{T}_{h}^{1}$:Distorted Concave Rhombic Quadrilaterals;
				\eqref{mesh2} $\mscr{T}_{h}^{2}$: Trapezoidal meshes;
				\eqref{mesh3} $\mscr{T}_{h}^{3}$: Sequence of Centroidal Voronoi Tessellation; and \eqref{mesh4} $\mscr{T}_{h}^{4}$: Hexagonal meshes.} \label{MESH}
		\end{figure}
		\begin{algorithm}
			\caption{Projected Gradient Algorithm.}
			\textbf{Requirement:} Regularization parameter $\lambda$ and tolerance error $\eta$.
			\begin{algorithmic}[1]\label{Algorithm1}
				\State Choose an arbitrary initial guess $\uh$ and set $error= 1$.
				\While{$error > \eta$,}
				\State Solve the discrete system (\ref{5.3.3.1a}-\ref{5.3.3.1b}) to obtain $\psi_h$ and $\vartheta_h$.
				\State Using the discrete projection formula \eqref{dpf}, find the new control as:
				\begin{align*}
					\boldsymbol{u_{h,n}}|_{K} = \mscr{P}_{[\boldsymbol{u_a},\boldsymbol{u_b}]}\left(-\frac{1}{\lambda}\boldsymbol{\Pi}_{K}^{{{k-1}}} \big(\curl(\vartheta_h)|_{K}\big) \right) \qquad \forall K \in \mscr{T}_{h}.
				\end{align*}
				\State Calculate the error between $\uh$ and $\boldsymbol{u_{h,n}}$ as: $error = \|\uh-\boldsymbol{u_{h,n}}\|_{0}$.
				\State Update the control variable: $\uh = \boldsymbol{u_{h,n}}$.
				\EndWhile
			\end{algorithmic}
		\end{algorithm}
		For computing the VEM errors and convergence rates, we consider the following computable error quantities and the convergence rate formula:\\
		\begin{align*}
			e_{i}(\psi) &:= \left(\sum_{K \in \mscr{T}_{h}}|\psi-\Pi_{K}^{{{k}},D}\psi_h|_{i,K}^{2}\right)^{\frac{1}{2}}, &&i=1,2; \\
			 e_{0}(\omega) &:= \left(\sum_{K \in \mscr{T}_{h}}\|\omega-\Pi_{K}^{{{k-2}}} \Delta \psi_h\|_{0,K}^{2}\right)^{\frac{1}{2}}, && e_{0}(\u) := \left(\sum_{K \in \mscr{T}_{h}}\|\u-\uh\|_{0,K}^{2}\right)^{\frac{1}{2}}, \\\\
			e_{j}(p) &:= \left(\sum_{K \in \mscr{T}_{h}}|p-\Pi_{K}^{\nabla}p_h |_{j,K}^{2}\right)^{\frac{1}{2}}, && j =0,1; \\ 
			r_{l,*} &:= \frac{log(e_{l}(*)/e_{l}'(*))}{log(h/h')}, && l = 0,1,2,
		\end{align*}
		where $h$ and $h'$ are consecutive mesh sizes with corresponding errors $e_{l}$ and $e_{l}'$, respectively. The errors for the co-state variables are computed using the same formulas and the stream-function $\vartheta_h$.
	\begin{example}\label{Example 5.2.}
	\textbf{\textit{Viscosity variation:}}
	In this example, for the unit square domain $\Omega = (0,1)^{2}$, we consider a variable viscosity test. More precisely, we take
	two specifications for the viscosity: 
	\begin{itemize}
		\item \textbf{Case 1:} $\nu(\x) = \nu_{0} + (\nu_1 - \nu_{0})x_1 x_2$,
		\item \textbf{Case 2:} $\nu(\x) = \nu_{0}^{3}$,
	\end{itemize}
	 where $\nu_{0} = 0.001$ and $\nu_{1} = 1$. The parameters $\bbeta = (1,1)^{T}$, $\sigma = 1000$ and control bounds $\boldsymbol{u_a} = -(0.1,0.1)^{T}$ and $\boldsymbol{u_b} = (0.25,0.25)^{T}$. We choose $\f$ and $\boldsymbol{y_d}$ such that the analytical solutions are:
	\begin{align*}
		\psi(\x) &= \vartheta(\x) = \sin^{2}(\pi x_1) \sin^{2}(\pi x_2)/(2\pi), &&
		\y(\x) = \w(\x) = \begin{pmatrix}
			\sin^{2}(\pi x_1) \sin(\pi x_2) \cos(\pi x_2) \\ - \sin^{2}(\pi x_2) \sin(\pi x_1) \cos(\pi x_1)
		\end{pmatrix},\\
		 \omega(\x) &= \theta(\x) = - \Delta \psi,&&
		p(\x) =  r(\x) = \sin(2\pi x_1) \sin(2\pi x_2).
	\end{align*}
	\end{example}
	
We have slightly modified the numerical scheme to incorporate the
variable viscosity and the results are presented with $k=3$ on hexagonal meshes $\mscr{T}_{h}^{4}$ for both cases and achieve optimal convergence rates for all state and co-state variables, as shown in Fig.~\ref{state211} for \textbf{Case 1} and Fig.~\ref{state2111} for \textbf{Case 2}. Fig.~\ref{FIGURE 8} visually depicts the stream-function $\psi_h$, velocity field $\boldsymbol{y_h}$, vorticity $\omega_h$, pressure field $p_h$, and the control components $\boldsymbol{u_{h1}}$ and $\boldsymbol{u_{h2}}$, providing a clear representation of the solution's behavior for \textbf{Case 1}. These results highlight the flexibility of the proposed approach in handling more general scenarios with spatially varying viscosity, confirming its suitability for addressing realistic and complex physical problems. In particular, although a complete theoretical foundation for the variable viscosity case is not developed in this work, the numerical results still exhibit the expected convergence rates for all variables. A rigorous extension of the theory to variable coefficient problems is an interesting and relevant direction for future research.
		\begin{figure}
				\centering
				\subfloat[\textbf{Case 1} \label{state211}]{\includegraphics[width=0.49\textwidth]{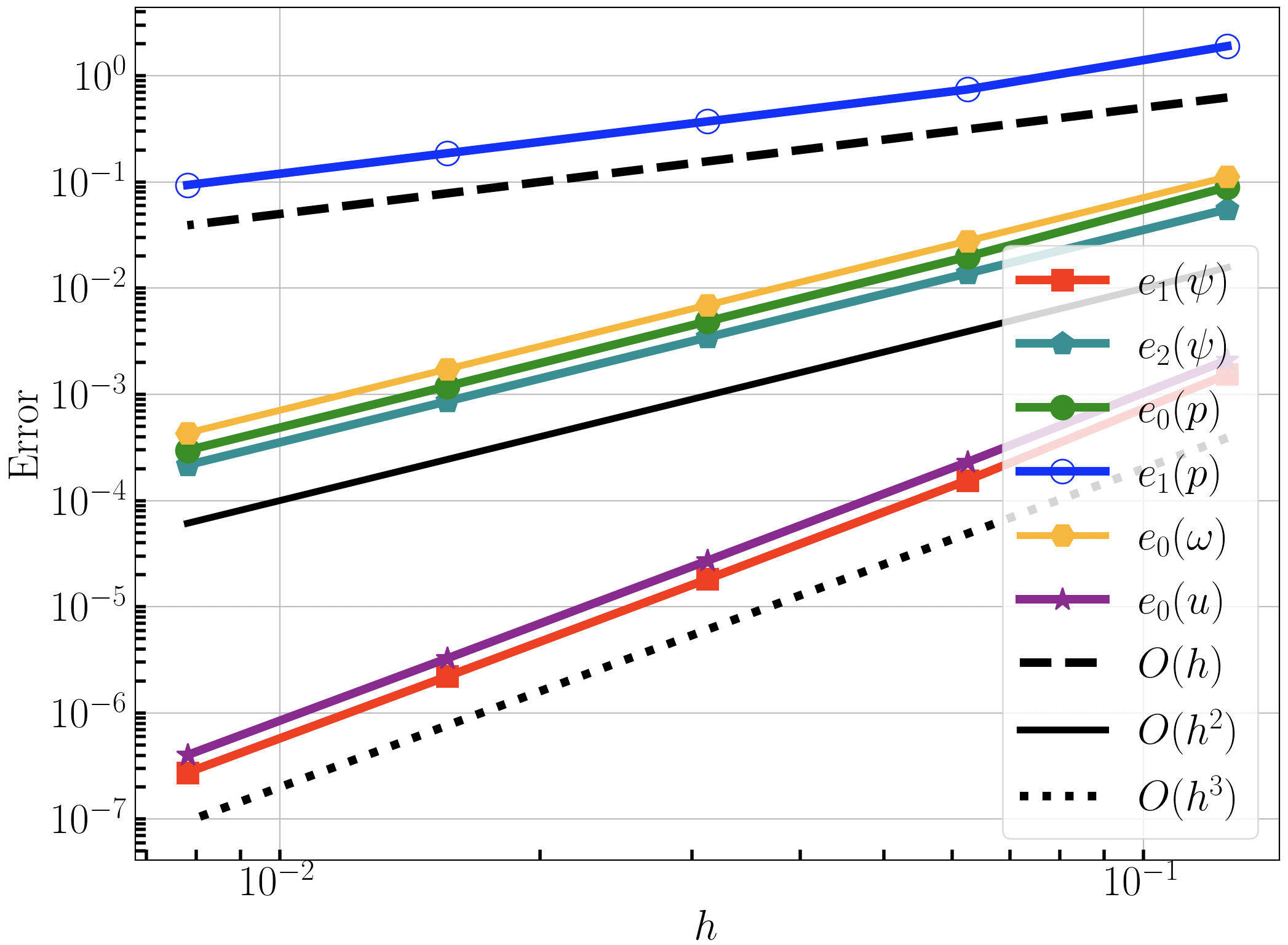} \hfill
				\includegraphics[width=0.49\textwidth]{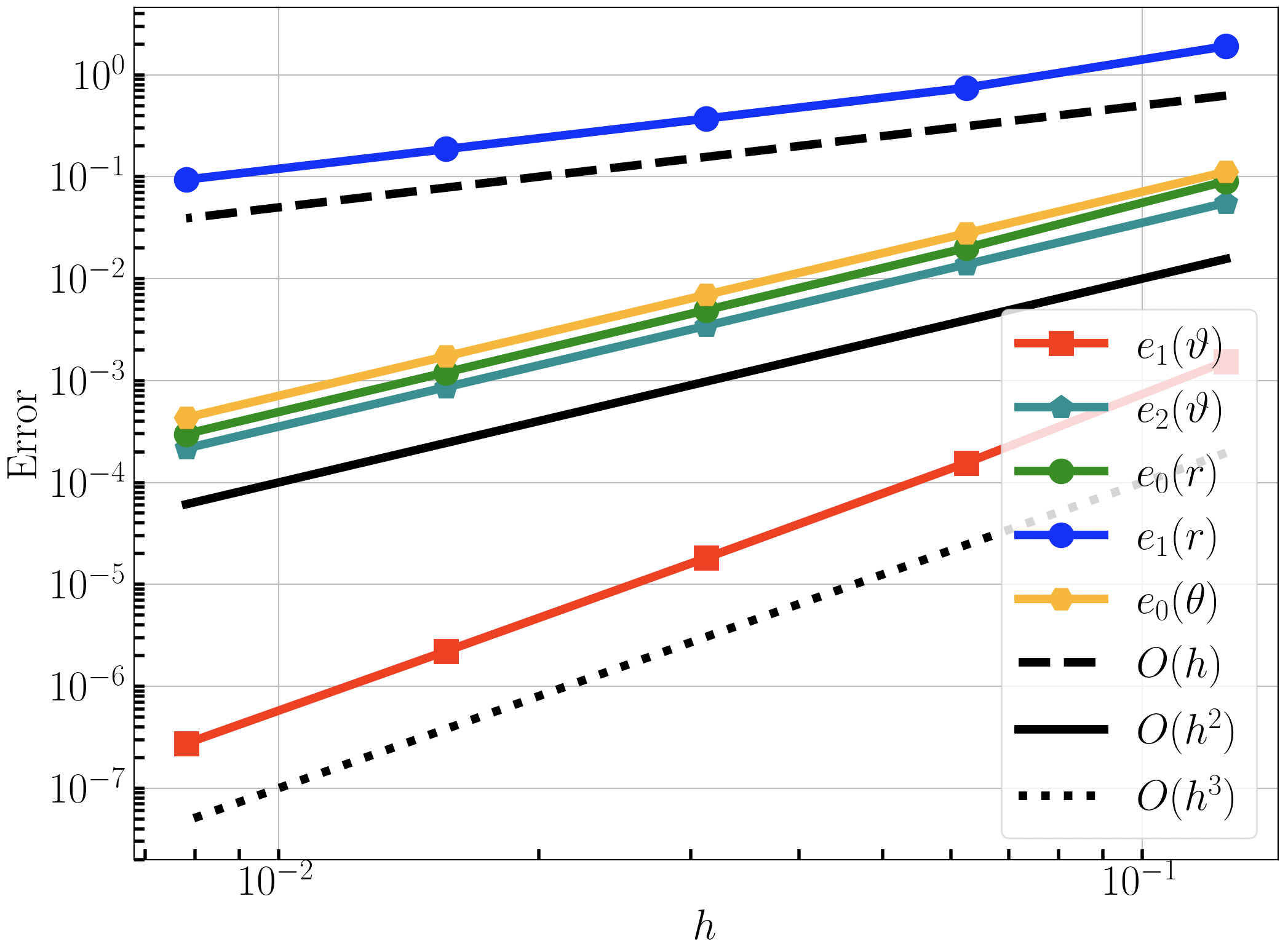}} 
					\\ 
				\subfloat[\textbf{Case 2} \label{state2111}]{\includegraphics[width=0.49\textwidth]{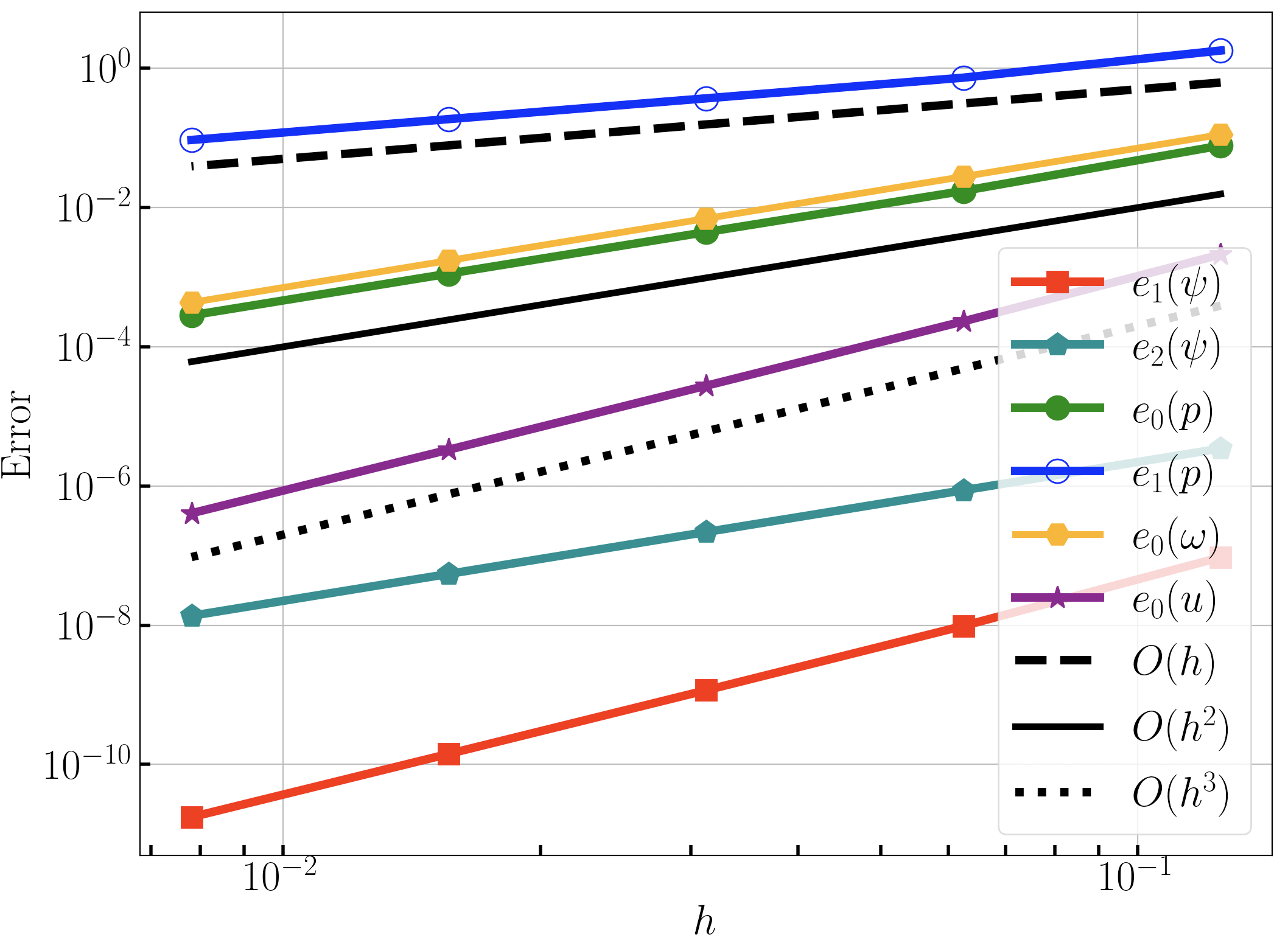} \hfill
				\includegraphics[width=0.49\textwidth]{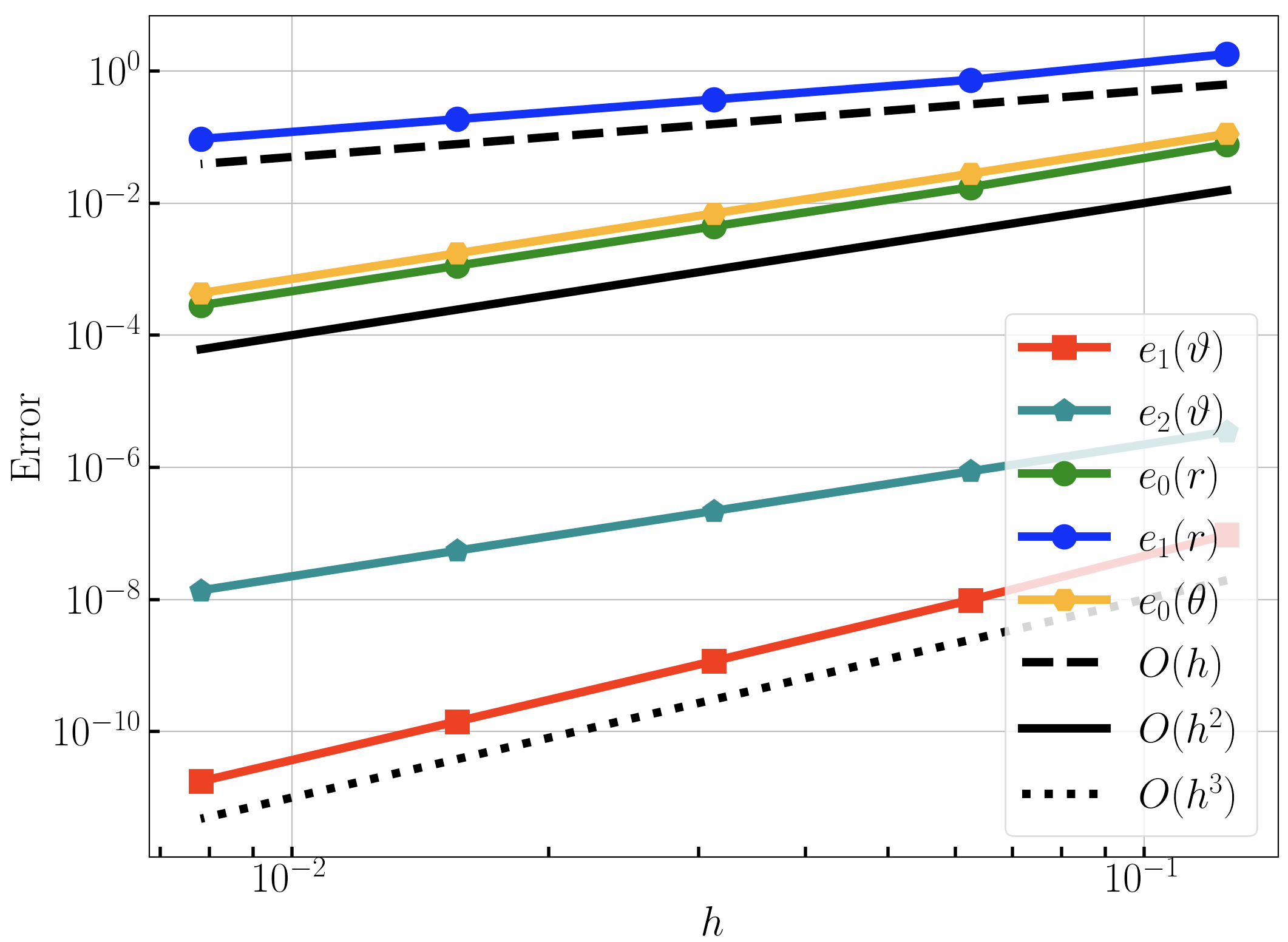}}  
				\caption{Convergence plots using the VEM scheme with $k=3$, meshes $\mscr{T}_{h}^{4}$ and two different viscosity functions \eqref{state211} \textbf{Case 1} and \eqref{state2111} \textbf{Case 2} for Example~\ref{Example 5.2.}.} \label{FIGURE 7}
					\end{figure}
					\begin{figure}
						\centering
						\subfloat[$\psi_h$ \label{psih1}]{\includegraphics[width=0.52\textwidth]{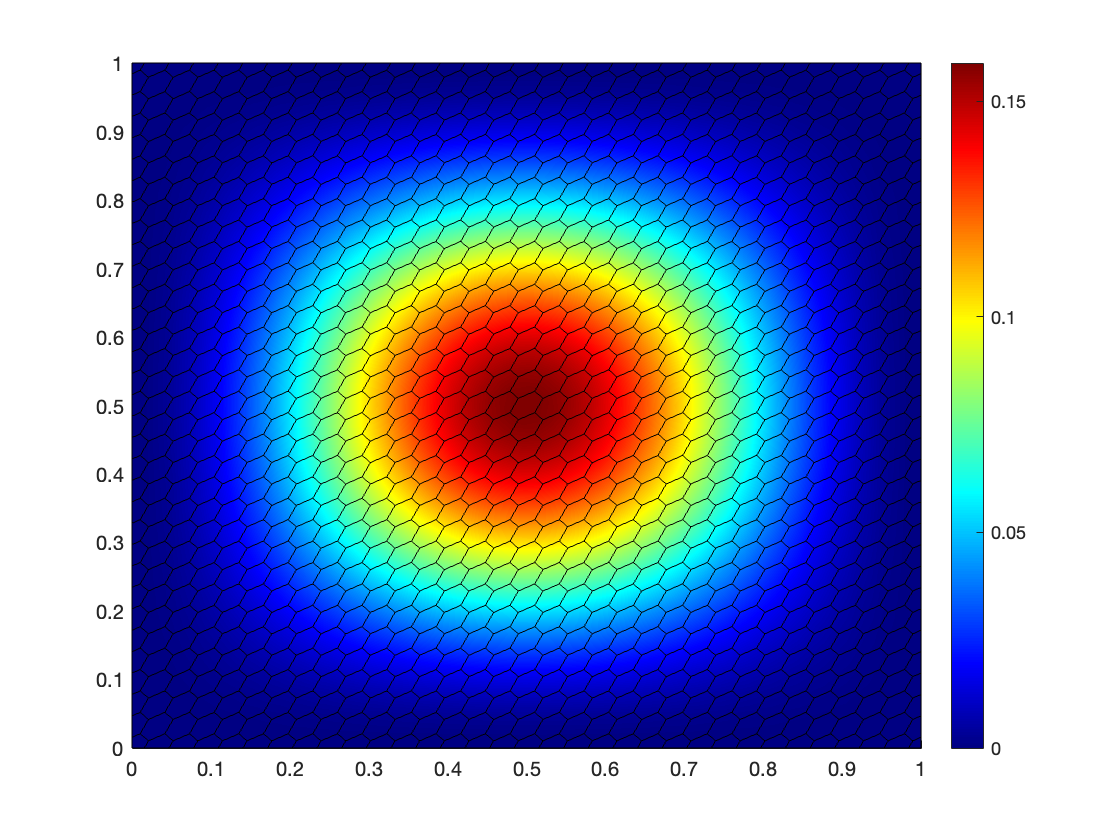}}
						\subfloat[$\boldsymbol{y_{h}}$ \label{yh1}]{\includegraphics[width=0.52\textwidth]{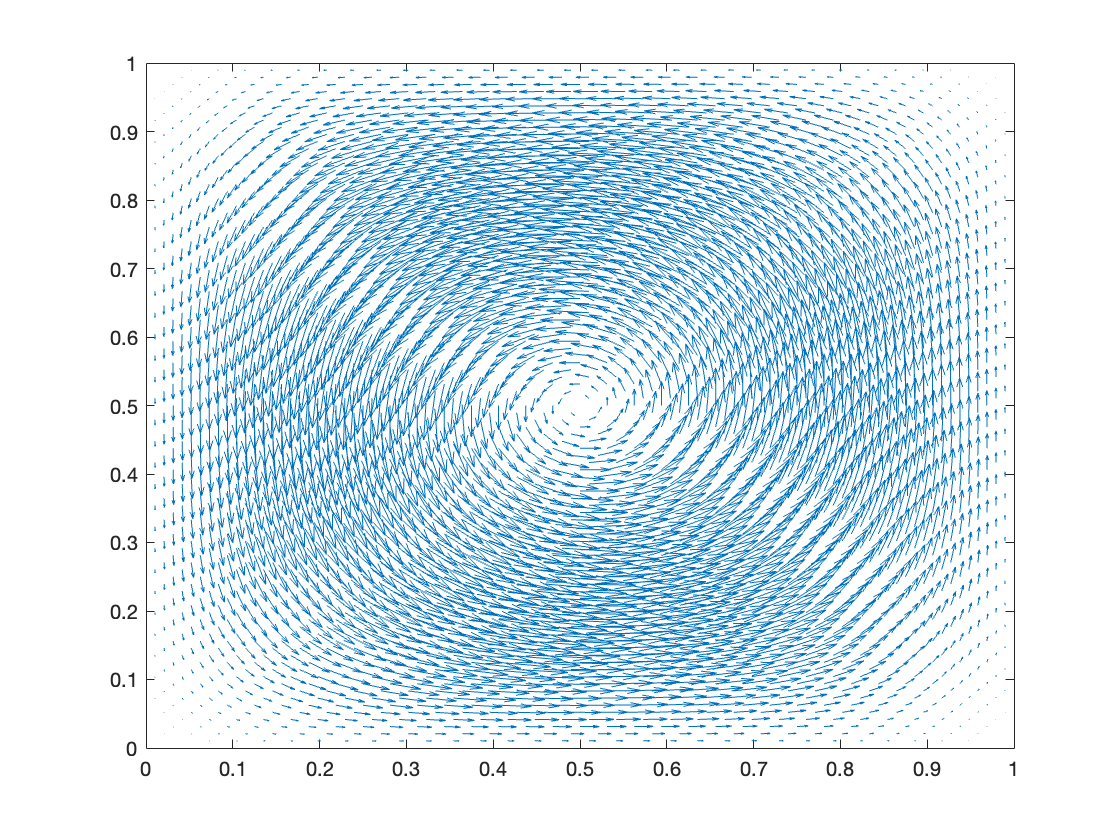}}\\
						\subfloat[$\omega_h$  \label{omega1}]{\includegraphics[width=0.52\textwidth]{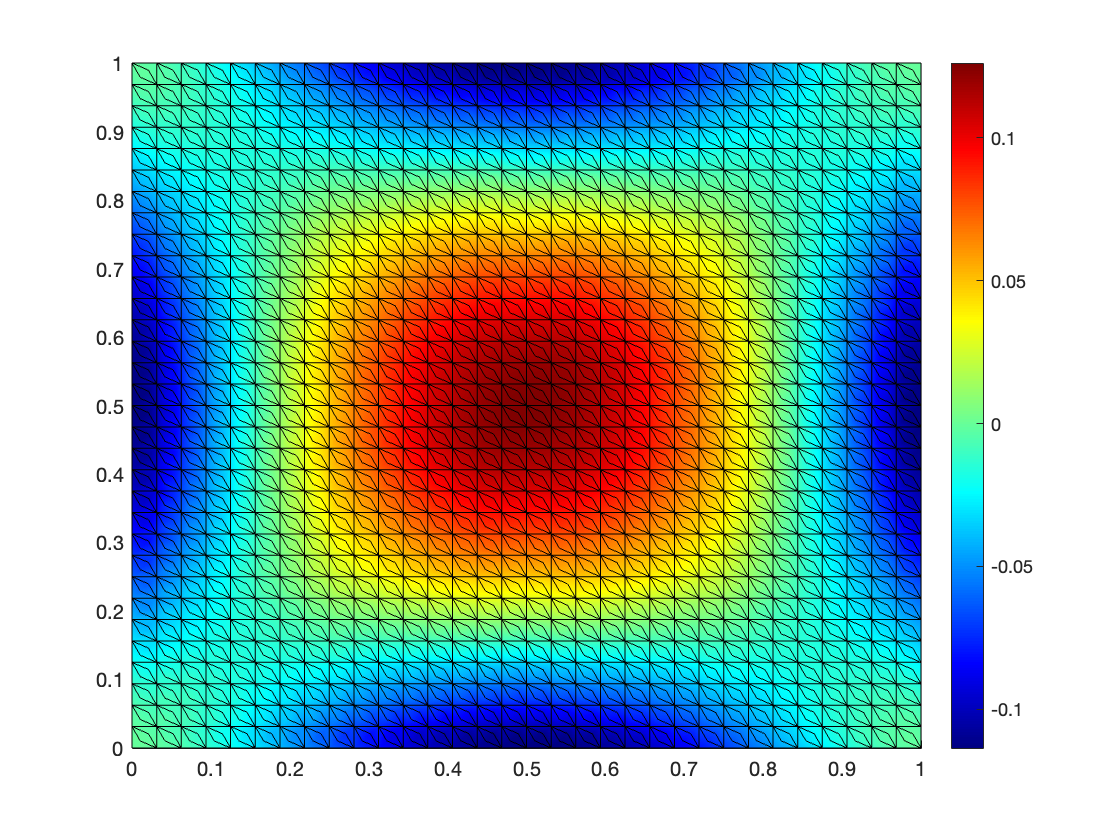}}
						\subfloat[$p_h$  \label{ph2}]{\includegraphics[width=0.52\textwidth]{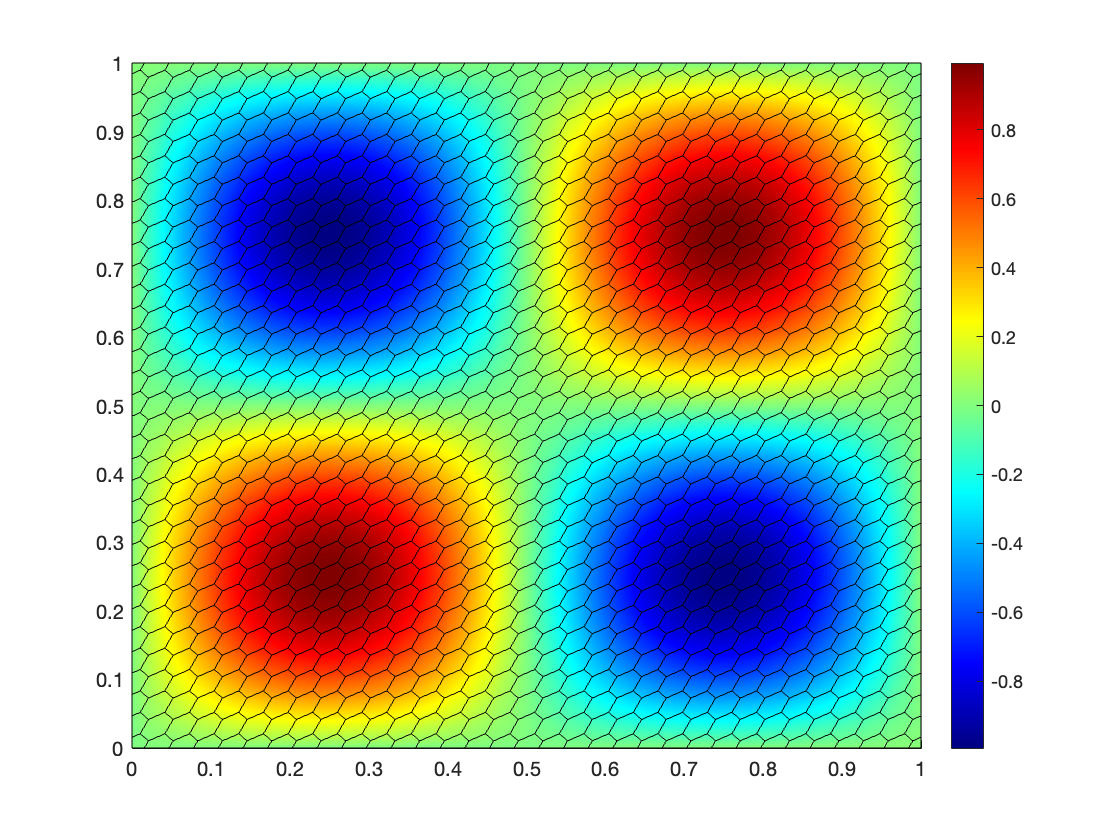}}	\\
						\subfloat[$\boldsymbol{u_{h1}}$  \label{uh111}]{\includegraphics[width=0.52\textwidth]{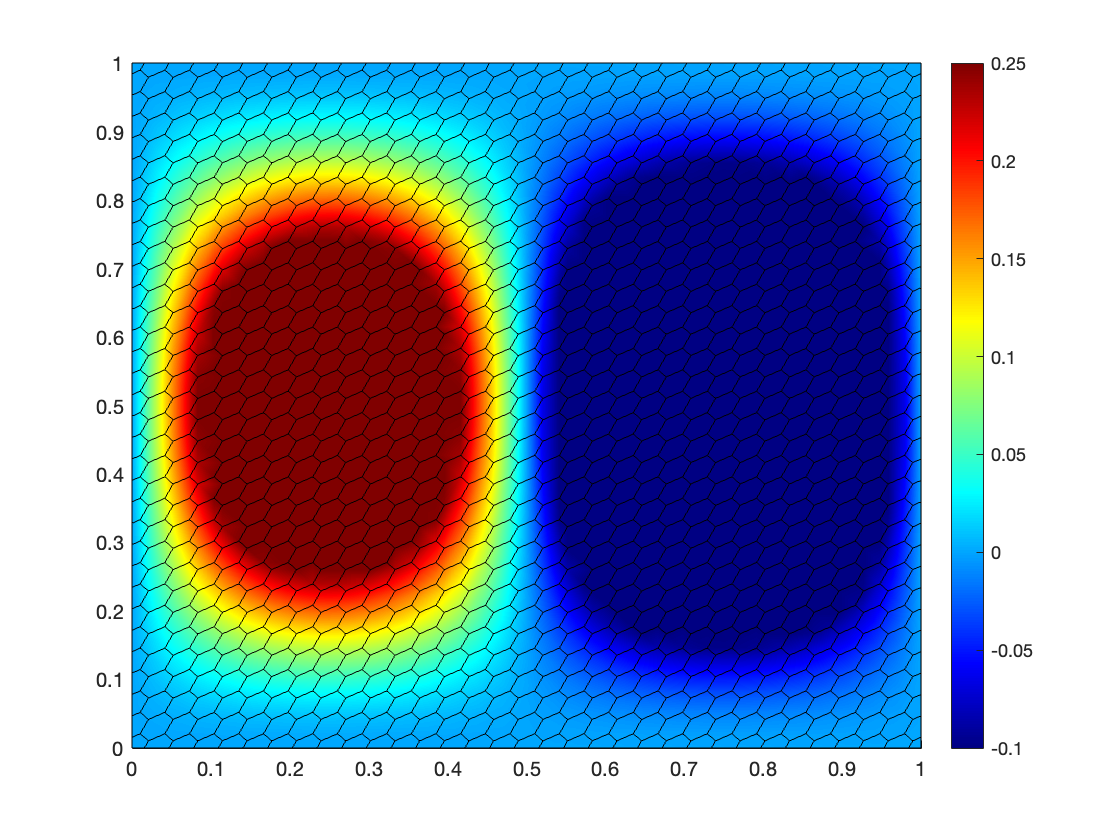}}
						\subfloat[$\boldsymbol{u_{h2}}$  \label{uh121}]{\includegraphics[width=0.52\textwidth]{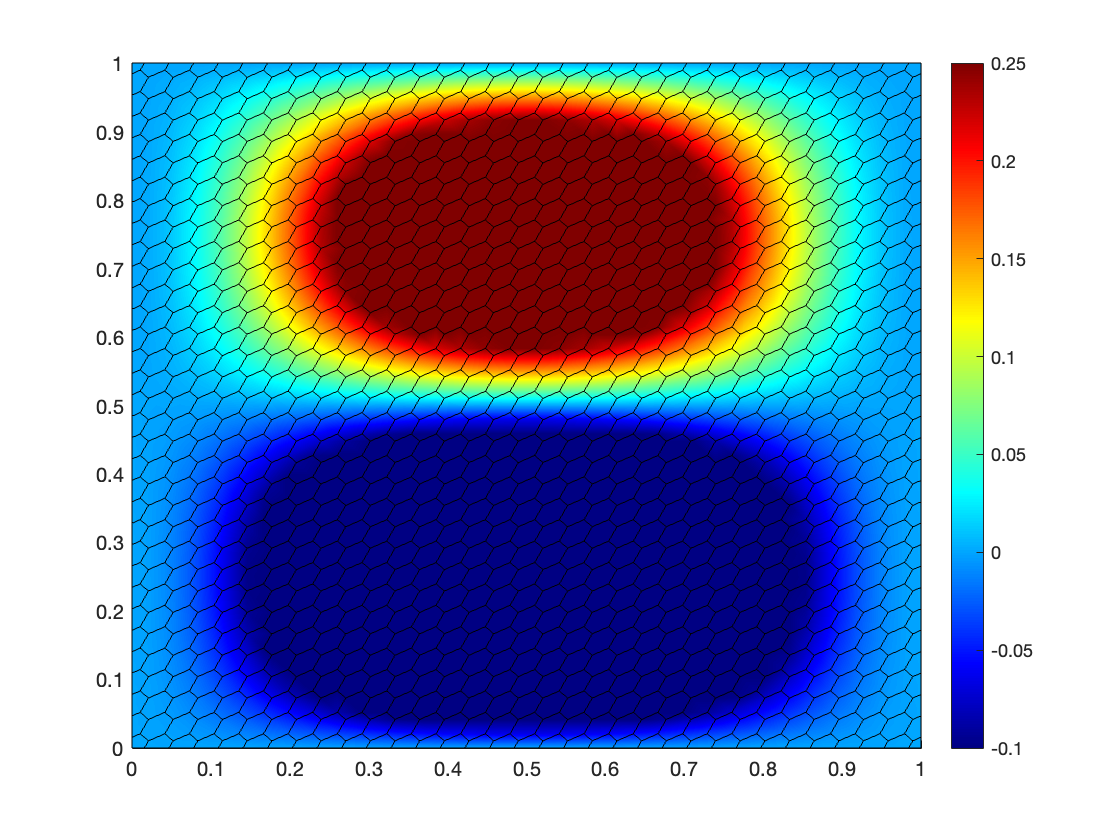}} 
						\caption{Plots of numerical approximations of \eqref{psih1} $\psi_h$ \eqref{yh1} $\boldsymbol{y_{h}}$ \eqref{omega1} $\omega_h$ \eqref{ph2} $p_h$ and control components \eqref{uh111} $\boldsymbol{u_{h1}}$ \eqref{uh121} $\boldsymbol{u_{h2}}$ with $k=3$, $h = 1/32$ and $\mscr{T}_{h}^{4}$ mesh for \textbf{Case 1} in Example~\ref{Example 5.2.}.} \label{FIGURE 8}
					\end{figure}
\begin{example}\label{Example 5.4.}
	\textbf{\textit{Lid-driven cavity test:}} 
		In our last example, we perform the well-known classical 2D lid-driven cavity test. This problem models the motion of a viscous, incompressible fluid inside a rectangular container, where the top lid moves at a constant velocity while the other walls remain fixed with no-slip conditions. The flow takes place in the unit square domain $\Omega = (0,1)^2$. The velocity along the upper lid is prescribed as $\mathbf{y} = (1,0)^T$, while it is set to $\mathbf{y} = (0,0)^T$ for the stationary walls. The boundary conditions expressed in terms of the stream-function are:
			\begin{align*}
				\psi_h = \partial_x \psi_h = 0, \qquad \qquad \text{and} \qquad \qquad \partial_y \psi_h = 1,
			\end{align*}
		 on the upper lid and
			\begin{align*}
				\psi_h = \partial_x \psi_h = \partial_y \psi_h = 0.
			\end{align*}
		on the static walls. The co-state velocity $\w$ is subject to homogeneous Dirichlet data and hence the co-state stream-function $\vartheta_h$. We use the following parameters: $\nu = 0.01$, $\bbeta$ is the velocity obtained from the Stokes problem, $\sigma = 0.01$, $\boldsymbol{u_a} = -(0.5,0.5)^T$, $\boldsymbol{u_b} = (0.5,0.5)^T$, source function $\f = (0,0)^T$ and a stationary target velocity field $\boldsymbol{y_d} = (0,0)^T$.
	\end{example}
 \indent The exact solutions for this problem are not known to us. To approximate the solution, we use the proposed VEM scheme with $k=3$ on uniform square meshes. The stream-function solution in Fig.~\ref{FIGURE 12} highlights the formation of a dominant central vortex along with smaller secondary vortices near the bottom corners. The velocity field shows strong horizontal motion at the top, gradually circulating downward along the sidewalls, creating a recirculating flow. Fig.~\ref{FIGURE 12} also demonstrates the behaviour of co-state discrete stream-function $\vartheta_h$. The vorticity plot in Fig.~\ref{FIGURE 13} provides additional insights into the rotational behaviour of the flow. High vorticity values are observed near the upper lid, particularly at the two corners where the moving lid meets the stationary sidewalls. These regions experience strong shear effects, leading to intense rotational motion. Similarly, the pressure plot shows that the higher values are concentrated in the upper lid corners due to the abrupt change in velocity direction. 
 
		\begin{figure}
	\centering
	\subfloat[$\psi_h$ \label{streamcav}]{\includegraphics[width=0.52\textwidth]{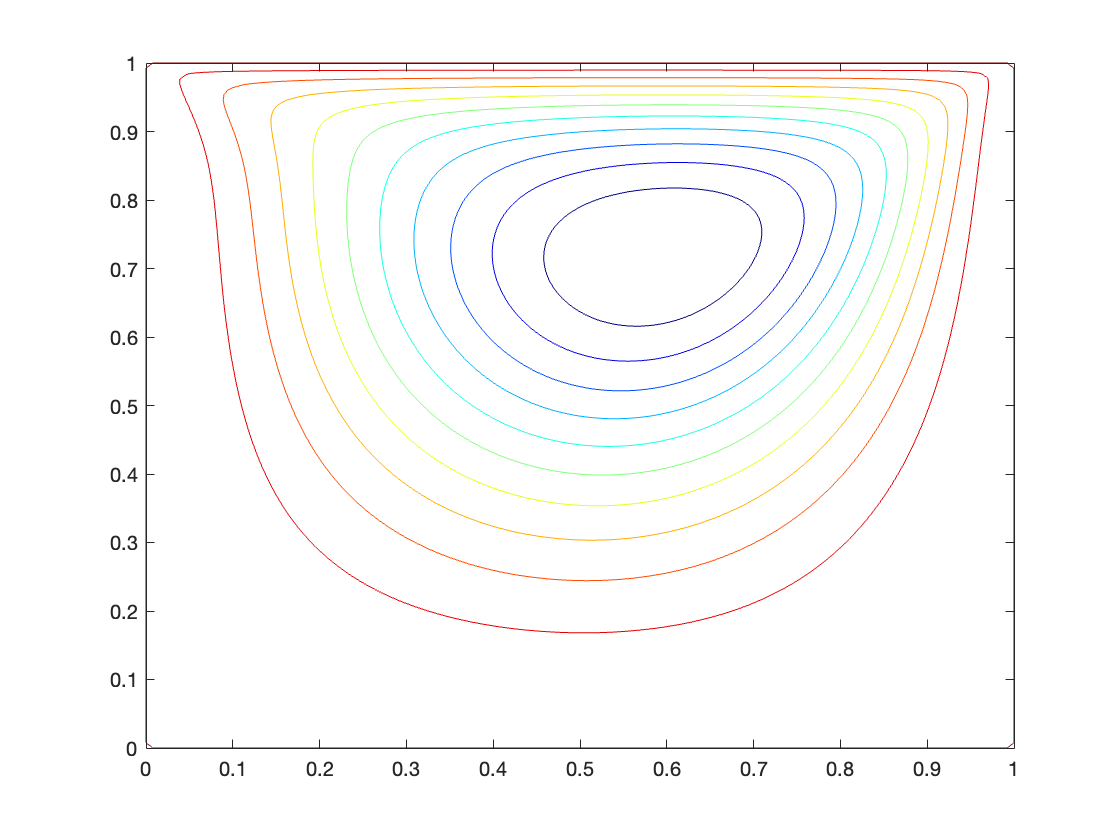}}
	\subfloat[$\vartheta_h$ \label{costreamcav}]{\includegraphics[width=0.52\textwidth]{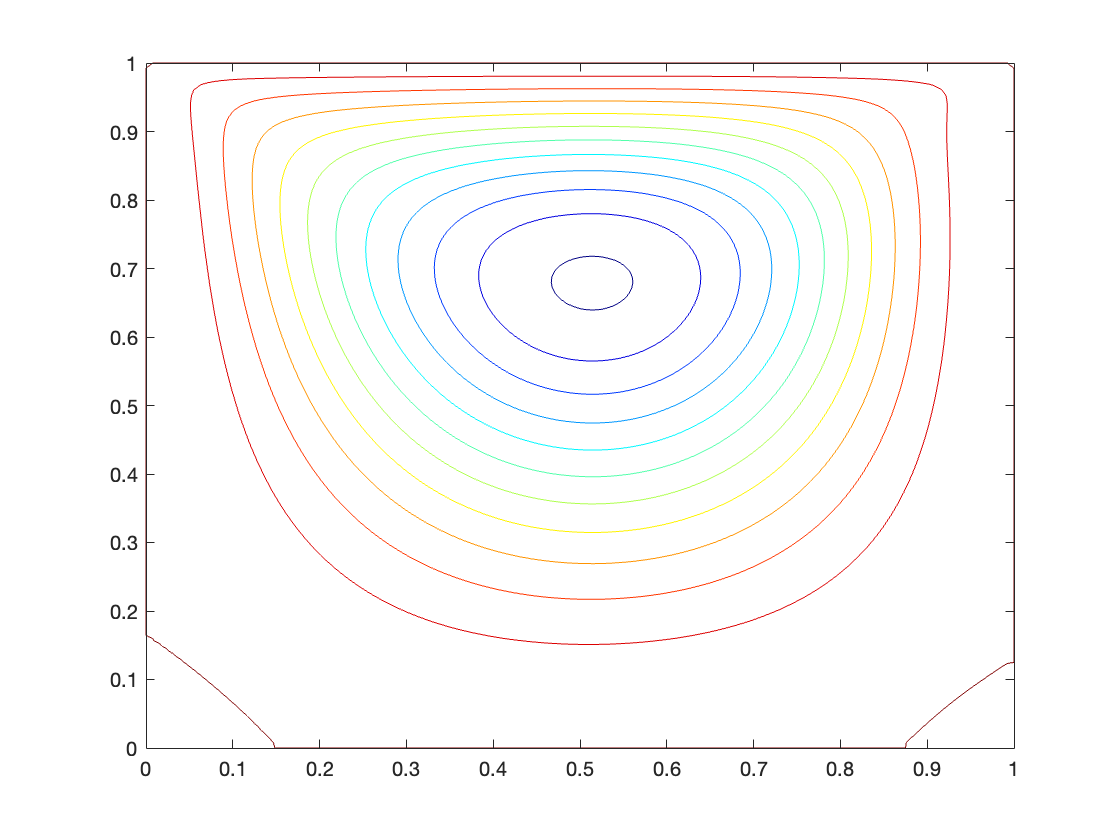}} \\
	\subfloat[$\boldsymbol{y_{h}}$ \label{velocav}]{\includegraphics[width=0.52\textwidth]{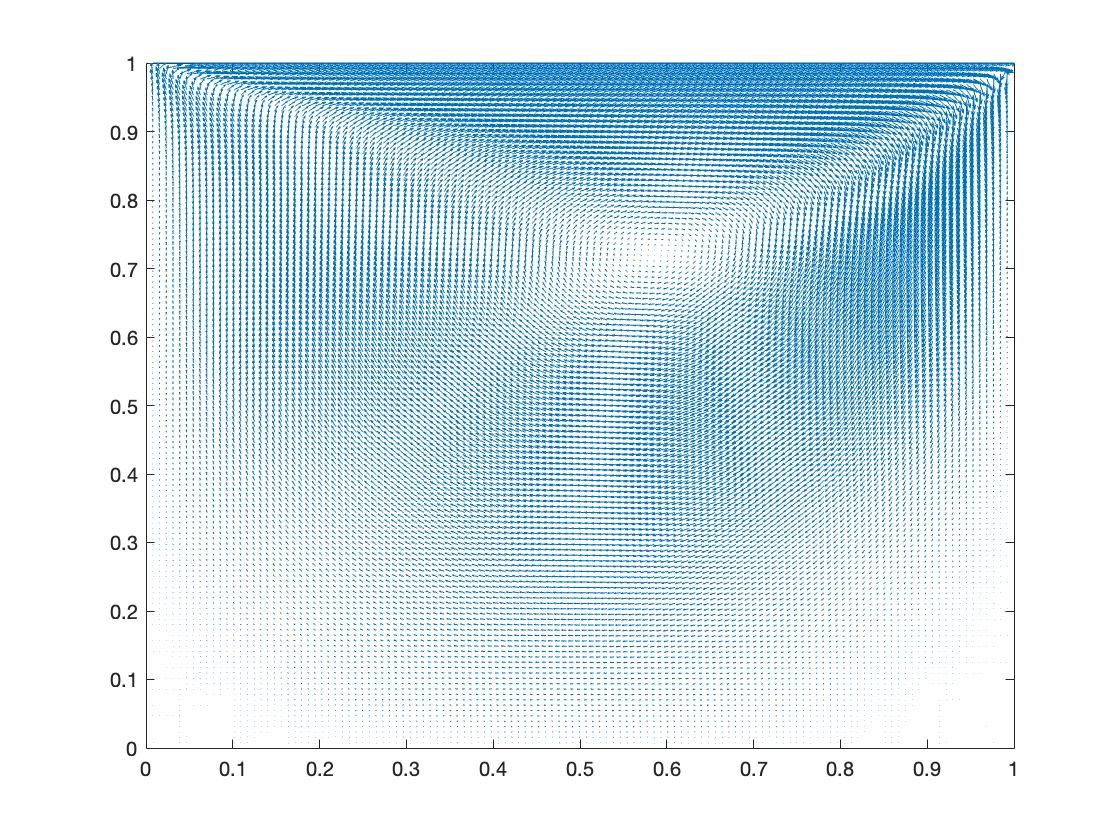}}
	\subfloat[$\boldsymbol{w_{h}}$  \label{veloccav}]{\includegraphics[width=0.52\textwidth]{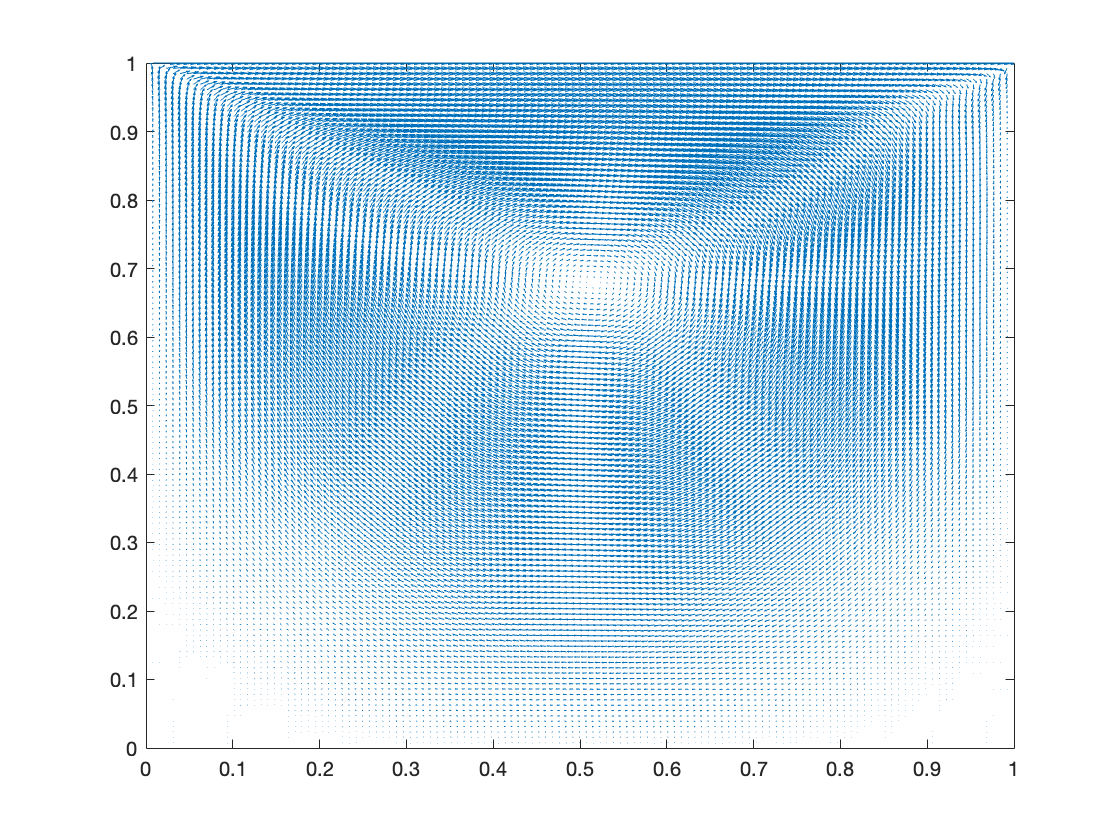}}
	\caption{Streamline plots for the numerical approximations of \eqref{streamcav} $\psi_h$ \eqref{costreamcav} $\vartheta_h$ \eqref{velocav} $\boldsymbol{y_{h}}$ \eqref{veloccav} $\boldsymbol{w_{h}}$ 
		with $h = 1/128$ for the lid-driven cavity Example~\ref{Example 5.4.}.} \label{FIGURE 12}
\end{figure}
		\begin{figure}
	\centering 
	\subfloat[$\omega_h$ \label{vortcavs}]{\includegraphics[width=0.5\textwidth]{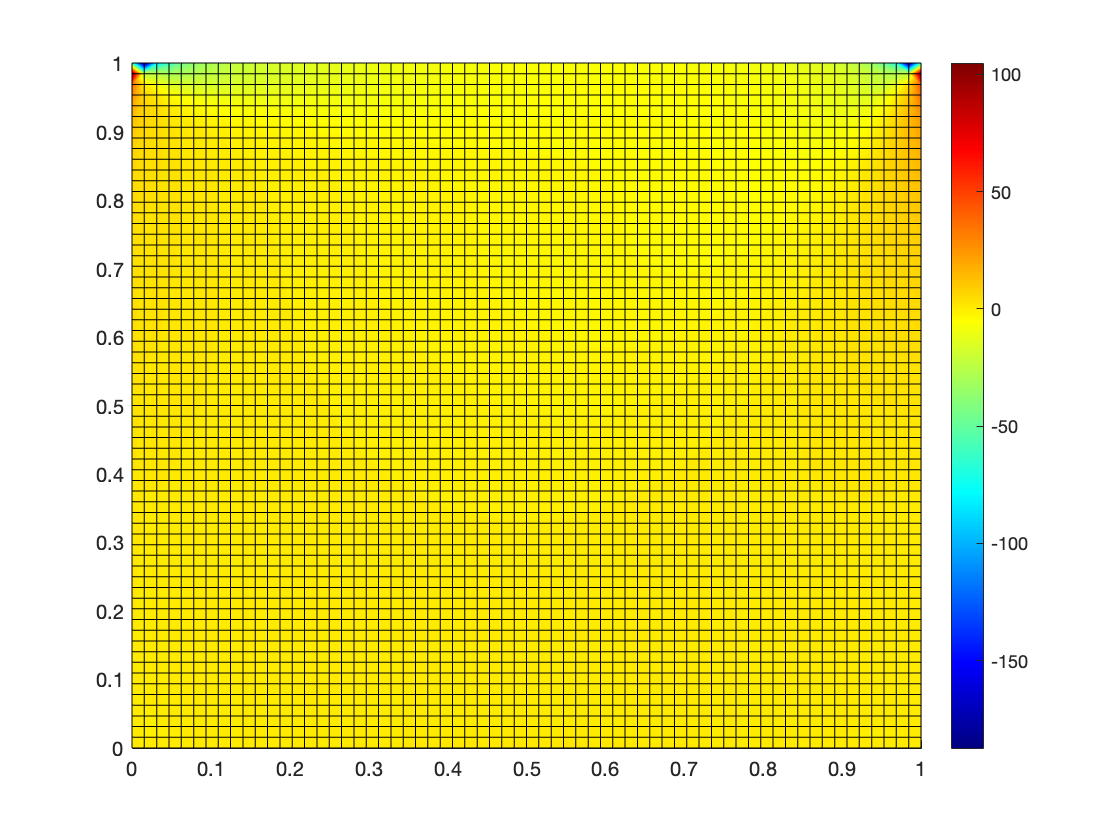}} \hfill
	\subfloat[$\theta_h$  \label{vortcavcs}]{\includegraphics[width=0.5\textwidth]{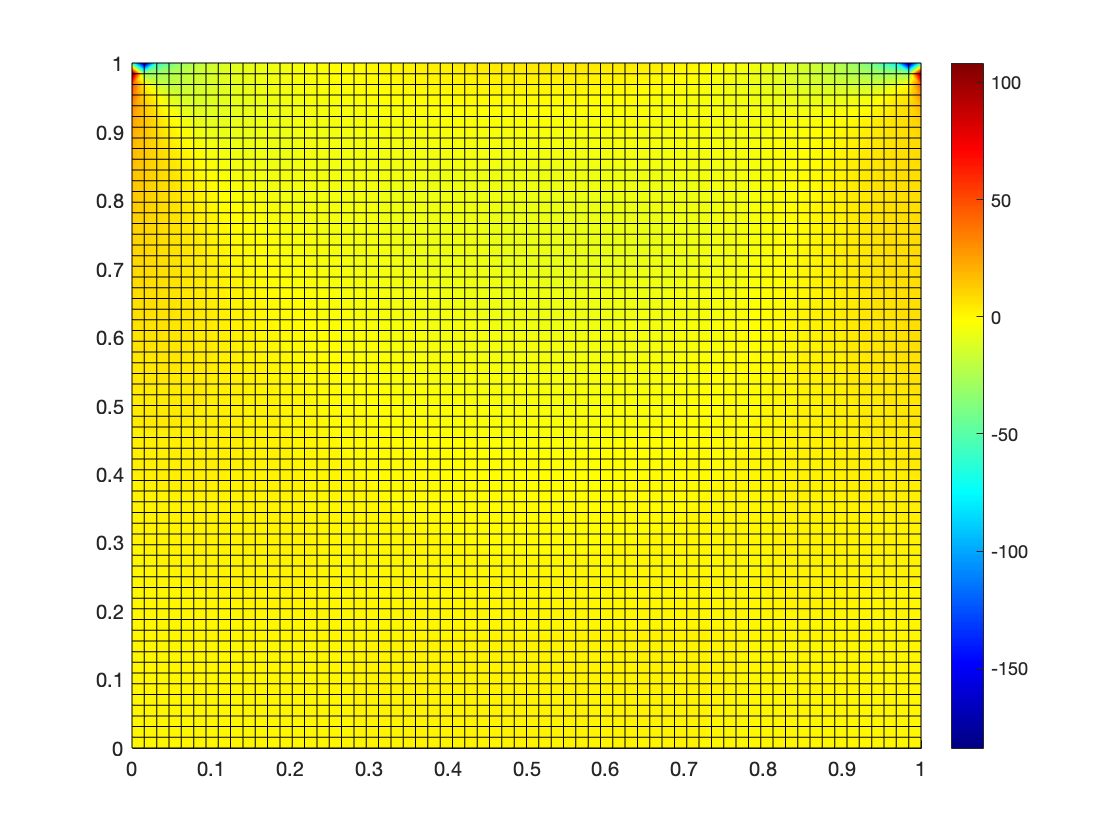}} 
	\\
	\subfloat[$p_h$  \label{phcav1}]{\includegraphics[width=0.5\textwidth]{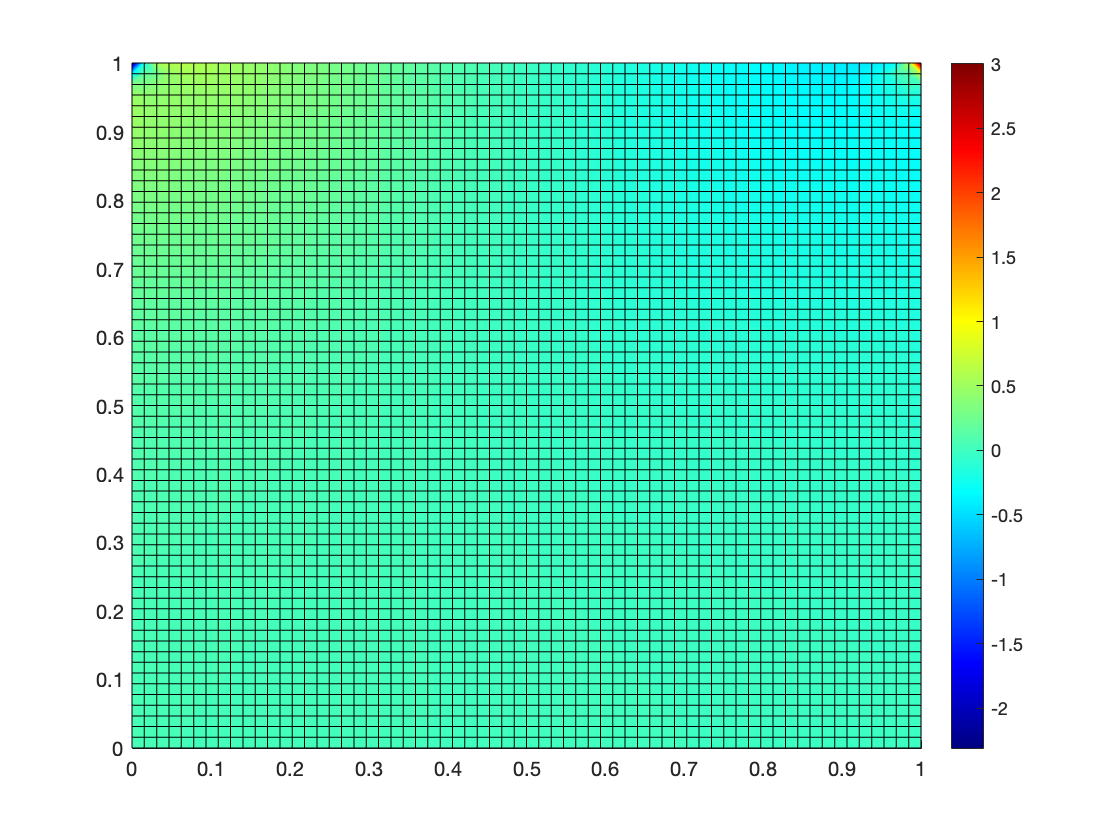}} \hfill
	\subfloat[$r_h$  \label{rhcav1}]{\includegraphics[width=0.5\textwidth]{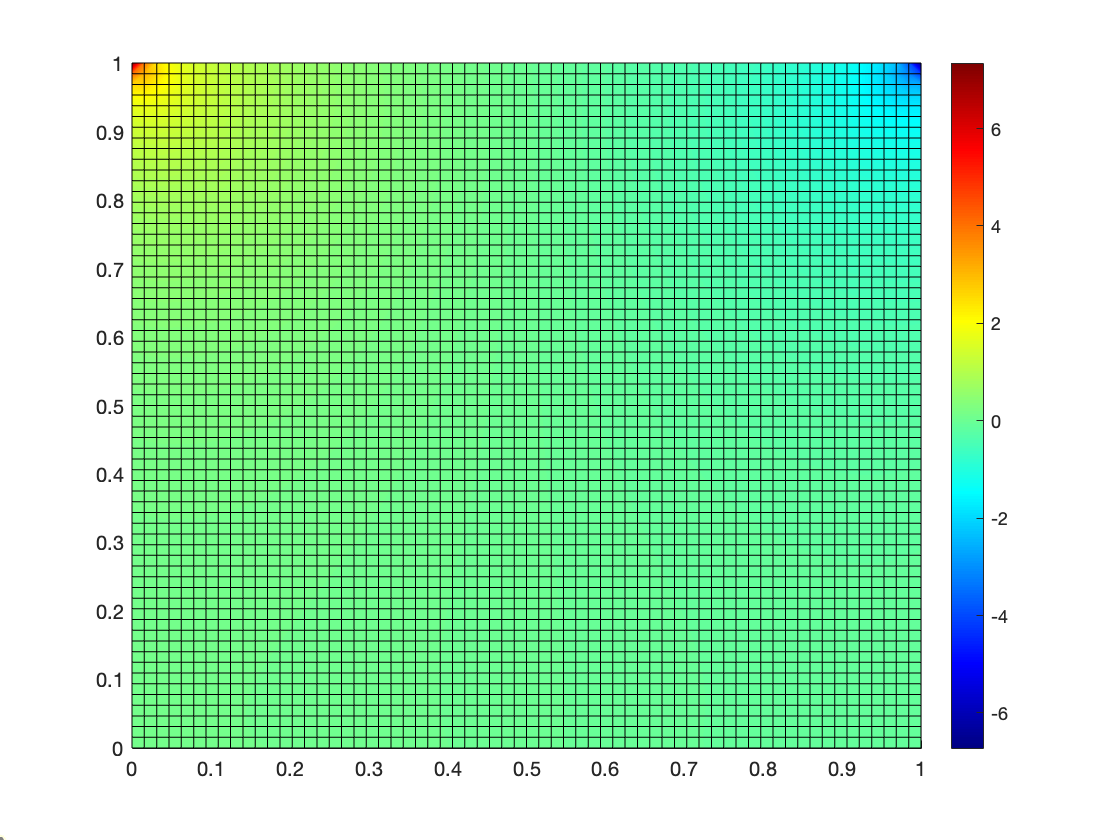}}
	\caption{Plots of state and co-state vorticity and pressure, respectively, with $h = 1/64$ for Example~\ref{Example 5.4.}.} \label{FIGURE 13}
\end{figure}
\section{Concluding Remarks}\label{SEC:CONCLUSIONS}
We have proposed a $C^1$-conforming virtual element scheme with a stream-function formulation for the numerical approximation of optimal control problems governed by generalized Oseen equations on simply connected polygonal domains. Optimal error estimates are derived for the stream-function, velocity, vorticity, and pressure, which are recovered using suitable polynomial projections. For $k=2$, we prove optimal convergence $O(h^2)$ in the $L^2$-norm for the control variable, while for $k=3$, the convergence improves to $O(h^3)$, provided that the stream-function $\psi, \vartheta \in H^4(\Omega)$. The derived results have been thoroughly validated on various polygonal meshes, demonstrating their certain robustness and consistency across a range of viscosity variations. We present various numerical experiments to validate theoretical results using different polygonal meshes. Additionally, we numerically explore the feasibility of incorporating variable viscosity. Our analysis also includes the optimal control problems governed by the Brinkman and Stokes equations. 
\section*{Acknowledgments}
The authors are sincerely grateful to the anonymous referees for their careful reading and insightful comments. Their constructive feedback has significantly improved the clarity, quality, and presentation of this work.
\bibliography{ref11}
\bibliographystyle{ws-m3as}

\end{document}